\documentclass[a4paper,12pt]{article}
\usepackage{amsmath,amsfonts,amsthm,amscd,amssymb,latexsym,comment}
%%%%%%%%%%%%%
\usepackage{enumitem}
\usepackage{tikz}  %
\usetikzlibrary{calc,trees,positioning,arrows,fit,shapes,calc,backgrounds}
\usepackage{graphicx,psfrag}%,subfigure,jchangebar,oldgerm}
\usepackage[mathscr]{eucal}
\usepackage{lipsum}
\usepackage[hang]{footmisc}
%%%%%%%%%
\sloppy
%%%%%%%%%%%%%%%%%%%%%%%
\includecomment{vnr}
%\excludecomment{vnr}
%
%
%\renewcommand{\abstractname}{Dedication}
%
\title{Automorphisms of Partially Commutative Groups III: Inversions and Transvections %Structure of Stablisers
%\footnote{Research carried out while the second and third named 
%authors were visiting Newcastle University with the support of
%EPSRC  grant 
%, June 2005.}
}
\author{Andrew J. Duncan,
 Vladimir N. Remeslennikov}% (2) ((1) Newcastle University, UK (2) Omsk State University)}

%%%%%%%%%%%%%%%%%%%%%%%%%%%

%%%%%%%%%%%%%%%%%%%%%%%%%%%
\def\nul{\emptyset }

\def\d{\delta }
\def\b{\beta }

\def\i{\iota }

\def\e{\varepsilon }
\def\G{\Gamma }
\def\g{\gamma }
\def\a{\alpha }
\def\t{\tau }

\def\s{\sigma }

\def\W{\Omega}

\def\cK{{\cal{K}}}
\def\cL{{\cal{L}}}
\def\cM{{\cal{M}}}

\def\cS{{\cal{S}}}
%\newcommand{\cV}{\cal{V}}

%\newcommand{\GG}{\ensuremath{\mathbb{G}}}
%%%%%%%%%%%%%%%%%%%%%%%%%%%%%%

%%%%%%%%%%%%%%%%%%%%%%%%%%%%%%
\newtheorem{theorem}{Theorem}[section]
\newtheorem{lemma}[theorem]{Lemma}
\newtheorem{corol}[theorem]{Corollary}
\newtheorem{prop}[theorem]{Proposition}

\newtheorem{definition}[theorem]{Definition}
\newtheorem*{defn*}{Definition}

%cvs -d :pserver:najd2@cvs.mas.ncl.ac.uk:/CVS/najd2
\newtheorem{exam}[theorem]{Example}
\newenvironment{example}{\begin{exam} \rm}{\end{exam}}
\newtheorem{remk}[theorem]{Remark}
\newenvironment{remark}{\begin{remk} \rm}{\end{remk}}

\newtheoremstyle{citing}% Name
{6pt}{6pt}% Space above and below
{\itshape}% Body font
{0\parindent}{\bfseries}% Heading indent and font
{.}% Punctuation after heading
{ }% Space after head (" " = normal interword space)
{\thmnote{#3}}% Typeset note only, if present
\theoremstyle{citing}
% all text is supplied in the option
%%%%%%%%%%%%
\numberwithin{equation}{section}
\numberwithin{figure}{section}
%%%%%%%%%%%%%%%%%%%%

\renewcommand{\v}{{\operatorname{v}}}

\newcommand{\Aut}{\operatorname{Aut}}

\newcommand{\out}{{\operatorname{{out}}}}
\newcommand{\innt}{{\operatorname{{s}}}}
\newcommand{\supp}{{\operatorname{{Supp}}}}
%\newcommand{\cAut}{{\operatorname{Aut^{\textrm{c}}}}

%%%%%%%%%%%%%%%%%%%

\newcommand{\FF}{\ensuremath{\mathbb{F}}}
\newcommand{\GG}{\ensuremath{\mathbb{G}}}
\newcommand{\ZZ}{\ensuremath{\mathbb{Z}}}

\newcommand{\NN}{\ensuremath{\mathbb{N}}}

\newcommand{\la}{\langle}
\newcommand{\ra}{\rangle}
%\newcommand{\BA}{\ensuremath{\mathbb{A}}}
%%%%%%%%%%%%%%%%%%%%%%%%%%%%%%%%%%%%%%

\newcommand{\cl}{\operatorname{cl}}

\newcommand{\maps}{\rightarrow}

\newcommand{\bs}{\backslash}
\newcommand{\St}{\operatorname{St}}
\newcommand{\conj}{\operatorname{conj}}
\newcommand{\cSt}{\St^{\conj}}

\newcommand{\Conj}{\operatorname{Conj}}

\newcommand{\LInn}{\operatorname{LInn}}

\newcommand{\Inn}{\operatorname{Inn}}

\newcommand{\Tr}{\operatorname{Tr}}

\newcommand{\tr}{\tau}%\operatorname{tr}}

\newcommand{\Inv}{\operatorname{Inv}}

\newcommand{\cmp}{{\operatorname{{\cal{C}}}}}

\newcommand{\GL}{\operatorname{GL}}

\newcommand{\lk}{\operatorname{{lk}}}
\newcommand{\st}{\operatorname{{st}}}
\newcommand{\stab}{\operatorname{{stab}}}

\newcommand{\ad}{\mathfrak{a}}
\newcommand{\Ad}{\mathfrak{A}}
\newcommand{\V}{{\operatorname{v}}}

%%%%%%%%%%%%%%%%%%%%%%%%%%%%%%%
\newcommand{\be}{\begin{enumerate}}
  \newcommand{\ee}{\end{enumerate}}
\newcommand{\biz}{\begin{itemize}}
\newcommand{\eiz}{\end{itemize}}
\newcommand{\bd}{\begin{description}}
\newcommand{\ed}{\end{description}}
%%%%%%%%%%%%%%%%%%%%%%%%%%%%%%%%%%%

%\renewcommand{\min}{{\min}}

%\newcommand{\ilya}[1]{\marginpar{{\scriptsize #1 -- IVK }}}
\newlength{\nts}
\setlength{\nts}{.5ex}
\newlength{\rts}
\setlength{\rts}{.5cm}
\newlength{\lts}
\setlength{\lts}{1.75cm}
\newenvironment{ajd1}{\noindent\color{blue} ~\\AJD }{}

%
%%allows individual labels to contain an asterix
%\newcommand{\staritem}{\global\asterisktrue\item}
%\newcommand{\perhapsasterisk}{%
%  \ifasterisk*\global\asteriskfalse\fi
%}
%\newif\ifasterisk
%%allows labels to continue after one asterix
%\newcommand{\postitem}{\global\posttrue\item}
%\newcommand{\perhapspost}{%
%  \ifpost\global\postfalse\fi
%}
%\newif\ifpost
%\renewcommand\footnotemark{}
%\renewcommand\footnoterule{}
\setlength\footnotemargin{10pt}
\newcommand\blfootnote[1]{%
  \begingroup
  \renewcommand\thefootnote{}\footnote{#1}%
  \addtocounter{footnote}{-1}%
  \endgroup
}
\begin{document}

\maketitle

\begin{abstract}
The structure of a certain  subgroup $\St$ of the automorphism group of a partially commutative group (RAAG) 
$\GG$ is described in detail: namely the subgroup generated by inversions and elementary transvections. 
We define admissible subsets of the generators of $\GG$, and show that  $\St$ is the subgroup of 
automorphisms which fix all subgroups $\la Y\ra$ of $\GG$, for all admissible subsets $Y$.  
A decomposition of $\St$ as an iterated tower of semi-direct products in given and the structure
of the factors of this decomposition described. The construction allows a presentation of $\St$ to be computed,
from the commutation graph of $\GG$.  
\end{abstract}
\blfootnote{\noindent\emph{ Mathematics Subject Classification.} 
Primary  20F36, 20F28; Secondary  20F05.}
\blfootnote{\emph{Key words and phrases.} Right-angled Artin groups; Partially commutative groups, 
Automorphism groups; Generators, relations, and presentations.}

\section{Introduction}
A {\em partially commutative group} (also known 
as a {\em right-angled Artin group})
is a group given by a finite presentation
$\la X| R\ra$, where $R$ is a subset of
$\{[x,y]\,|\, x,y\in X, x\neq y\}$. (Our convention
is that $[x,y]=x^{-1}y^{-1}xy$.)  
The \emph{commutation graph} of a partially commutative group is the
simple graph $\G$ with vertices $X$ and an edge joining $x$ to
$y$ if and only if $[x,y]\in R$. (A simple graph
is one without multiple edges or self-incident vertices.)
A simple graph $\G$ uniquely determines a presentation $\la X|R\ra$ of a
partially commutative group with commutation graph $\G$, which we denote 
$\GG_\G$, and if $\G$ and $\G'$ are  simple graphs such that $\GG_\G\cong \GG_{\G'}$ then
$\G$ and $\G'$ are isomorphic graphs \cite{droms87}. The study of isomorphisms between
partially commutative groups therefore reduces to the study of  automorphisms of groups  
$\GG_\G$.  For background information on automorphisms of partially commutative
groups we refer to \cite{CharneyStambaughVogtmann}, \cite{Day09}, \cite{DR6}  and the references therein.
In particular, the automorphism group $\Aut(\GG_\G)$ of $\GG_\G$ was shown to have a finite generating
set by Laurence, building on work of Servatius \cite{Laurence95,servatius89}; a finite presentation
for these groups was found by Day \cite{Day09}; and geometric models of the Outer automorphism group
of $\G_\G$ were constructed in \cite{CharneyStambaughVogtmann}. Here we consider the
decomposition of $\Aut(\GG_\G)$ into subgroups corresponding to particular types
of the generators found by Laurence and Servatius.

Laurence and Servatius identified four types of elementary automorphism which together generate
$\Aut(\GG_\G)$. These are
\begin{itemize}
\item automorphisms which permute the elements of $X$, called \emph{graph automorphisms},   
\item automorphisms which map an element $x\in X$ to $x^{-1}$ and fix all other elements of $X$, 
called \emph{inversions},
\item 
  \emph{elementary transvections} which map an    element $x\in X^{\pm 1}$ to $xy^{\pm 1}$, for some element $y\in X$ and fix all elements of
  $X\bs \{x\}$, and 
\item \emph{vertex conjugating automorphisms} which, for some element $x\in X^{\pm 1}$ and some subset $C\subseteq X$, map
  $c$ to $c^x$ and fix all elements of $X\bs C$. 
\end{itemize}
Conditions on elements and subsets of $X$ under which elementary transvections and vertex conjugating automorphisms exist
are discussed in Section \ref{sec:prelim} below.  
The subgroup $\Aut^*(\GG_\G)$ generated by inversions, elementary transvections and elementary vertex conjugating automorphisms
has finite index; and $\Aut(\GG_\G)=\Aut(\G^\cmp)\ltimes \Aut^*(\GG_\G)$, where
$\Aut(\G^\cmp)$ is a subgroup of the group of automorphisms of $\GG_\G$ which permute $X^{\pm 1}$ 
(see Section \ref{sec:genaut} below for more detail). 

Generalising the notion of vertex conjugating automorphism: an automorphism $\phi\in \Aut(\GG)$ is called  a {\em conjugating} 
automorphism if there exists $g_x\in \GG$ such that $x\phi=x^{g_x}$, for all $x\in X$. 
The subgroup of $\Aut(\GG)$ consisting of   all  conjugating automorphisms 
 is denoted $\Conj(\GG)$. Laurence \cite{Laurence95} proved that  $\Conj(\GG)$ is the group generated by the vertex conjugating
automorphisms and later
Toinet \cite{toinet12} constructed  a finite presentation for this  group (with generators the vertex conjugating automorphisms).   
 Here we give a description of the
 structure of the subgroup generated by inversions and elementary transvections. We use the methods of \cite{DR6}, where a
 characterisation of $\Aut^*(\GG_\G)$ was given, in terms of stabilisers; which we shall now describe.

 For $x\in X$, the \emph{link}, $\lk(x)$, of $x$ is the set of all vertices joined to $x$ by an edge of $\G$. The \emph{star}, $\st(x)$, of $x$ is
 $\lk(x)\cup \{x\}$.
 We define an equivalence relation $\sim$ on $X$ by $x\sim y$ if and only if either $\st(x)=\st(y)$ or $\lk(x)=\lk(y)$; and denote by $[x]$ the
 $\sim$ equivalence class of $x$. (See Section \ref{sec:prelim} for more detail.)
 The \emph{admissible set}, $\ad(x)$ of $x$ is 
 \[\ad(x)=\cap_{y\in \lk(x)} \st(y),\]
 and we define \[\cK=\{\ad(x)\,|\,x\in X\},\] 
the set  of all admissible sets.  
  (See Example \ref{ex:admiss} below.)
 
  For any subset $Y$ of $X$ we denote by $\GG(Y)$ the subgroup of $\GG$ generated by $Y$.
  %%%%
In \cite{DR6} we defined  
\[
\St(\cK)=\{\phi\in \Aut(G)|G(Y)\phi=G(Y), \textrm{ for all } 
Y\in \cK\}
\]
and 
\[
\cSt(\cK)=\{\phi\in \Aut(G)|G(Y)^\phi=G(Y)^{f_Y}, 
\textrm{ for some } f_Y\in G, \textrm{ for all } 
Y\in \cK\},
\]
and proved that $\Aut^*(\GG_\G)=\cSt(\cK)$. As inversions and elementary transvections all belong to $\St(\cK)$, 
it follows that $\Aut^*(\GG)$ is generated by
$\St(\cK)$ and $\Conj(\GG_\G)$. However, in general these two subgroups intersect
non-trivially and it is not the case that $\Aut^*(\GG_\G)=\St(\cK)\cdot \Conj(\GG_\G)$.  Necessary and sufficient conditions 
on the graph $\G$ 
under which the latter holds are given in \cite{DR6}. 

In this paper we give a decomposition of $\St(\cK)$ as chain of semi-direct products, of subgroups
whose structure we can, to a significant extent, understand.
 To this end, the \emph{height} $h(x)$ of an element $x\in X$ is defined to be
the largest integer $i$ such that there exists a strictly descending
chain $\ad(x_i)>\ad(x_{i-1})>\cdots>\ad(x_0)$, where $x_i=x$. The 
$\cK$\emph{-height} of $h_\cK(\GG)$ of $\GG$ is the maximum  of the heights 
of elements of $X$. 
Let $h_\cK=h_\cK(\GG)$ and 
for  $0\le k\le h_\cK$, let the \emph{level $k$ vertex set} of $X$ be
\[\V(k)=\{y\in X| h(y)=k\}.\]
(See Example \ref{ex:admiss}.) We define
\[St_{k}^{\V}(\cK)=\{\phi\in \St(\cK)| y\phi=y, \textrm{ for all } y\in X\bs \V(k)\}\]
and 
\[St_{x}^{\V}(\cK)=\{\phi\in \St(\cK)| y\phi=y, \textrm{ for all } y\in X\bs [x]\}.\]
Our main results are the following, where we write $\St_k^\v$ and $\St^\v_x$ for  $\St_k^\v(\cK)$ and $\St^\v_x(\cK)$, respectively.
\begin{theorem}\label{thm:st2decomp}
Let  $\cmp$ be a transversal for $\sim$ and let $\cmp(k)=\v(k)\cap \cmp$. Then
\be[label=(\roman*)]
\item \label{it:st2decomp2} 
$
\St(\cK)=
(\cdots (
\St_{0}^\v
\ltimes
\St_{1}^\v
)
\ltimes 
\cdots
\ltimes
\St_{h_\cK-1}^\v
)
\ltimes
\St_{h_\cK}^\v
$ 
and 
\item\label{it:st2decomp4} $\St_{k}^\v=\prod_{y\in \cmp(k)}\St_{y}^\v$,
for $k=0,\ldots ,h_\cK$. 
\ee
\end{theorem}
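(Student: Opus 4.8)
The plan is to derive both parts from two structural inputs. The first is that $\St(\cK)$ is generated by inversions and elementary transvections. The second is the monotonicity of height under domination: writing $z \succeq x$ when $z$ dominates $x$, and recalling that $\ad(x)=\{z\in X:\lk(x)\subseteq\st(z)\}$ is exactly the set of dominators of $x$, I claim that $z\succeq x$ together with $z\not\sim x$ forces $h(z)<h(x)$. I would establish this claim first, by a short case analysis on the two types of domination ($x,z$ adjacent with $\st(x)\subseteq\st(z)$, or non-adjacent with $\lk(x)\subseteq\lk(z)$), showing in each case that $\ad(z)\subsetneq\ad(x)$ and hence, from the definition of height via strictly descending chains of admissible sets, that $h(z)<h(x)$. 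Two consequences are recorded for repeated use. Since $\ad(x)$ is constant on each class $[x]$, the height function is too, so each $\v(k)$ is a union of $\sim$-classes. And since an elementary transvection $x\mapsto xz$ requires $z\in\ad(x)$, we obtain the \emph{Height Lemma}: for every $\phi\in\St(\cK)$ and every $x\in X$, the image $x\phi$ is a word in vertices of height at most $h(x)$, with every letter of height exactly $h(x)$ lying in $[x]$.

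For \ref{it:st2decomp2}, set $P_j=\langle \St_0^\v,\ldots,\St_j^\v\rangle$ for $0\le j\le h_\cK$; the goal is to show $P_j=P_{j-1}\ltimes\St_j^\v$ and $P_{h_\cK}=\St(\cK)$. The equality $P_{h_\cK}=\St(\cK)$ is immediate from the generation fact: an inversion or transvection moving a vertex $x$ fixes $X\bs\{x\}\supseteq X\bs[x]$, hence lies in $\St_x^\v\subseteq\St_{h(x)}^\v$. For the factorisation I would verify three points. (a) $\St_j^\v\trianglelefteq P_j$: it suffices that each $\St_i^\v$ with $i<j$ normalises $\St_j^\v$, and for $\phi\in\St_i^\v$, $\psi\in\St_j^\v$ and any $w\in X\bs\v(j)$ one checks $w(\phi^{-1}\psi\phi)=w$, splitting into the cases $h(w)>j$ (where $\phi$ and $\psi$ both fix $w$) and $h(w)<j$ (where, by the Height Lemma, $w\phi^{-1}$ is a word in vertices of height $<j$, all fixed by $\psi$); thus $\phi^{-1}\psi\phi\in\St(\cK)$ fixes $X\bs\v(j)$, i.e.\ lies in $\St_j^\v$. (b) $P_{j-1}\cap\St_j^\v=1$: every element of $P_{j-1}$ fixes all vertices of height $\ge j$ (each generator does, and this is preserved under products since such vertices are fixed letter-by-letter), so an element lying also in $\St_j^\v$ fixes all of $X$. (c) $P_j=P_{j-1}\cdot\St_j^\v$ then follows automatically from (a). Assembling these for $j=1,\ldots,h_\cK$ yields the iterated decomposition.

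For \ref{it:st2decomp4}, fix $k$ and write $\v(k)=\bigsqcup_{y\in\cmp(k)}[y]$. Each $\St_y^\v$ sits inside $\St_k^\v$, since fixing $X\bs[y]$ implies fixing $X\bs\v(k)$. The decisive point, supplied by the monotonicity claim, is that for distinct $y,y'\in\cmp(k)$ no vertex of $[y']$ dominates a vertex of $[y]$: a dominator of $v\in[y]$ lying outside $[y]$ has height $<k$, whereas $[y']\subseteq\v(k)$ has height $k$. Consequently, for $\phi\in\St_k^\v$ and $v\in[y]$, the image $v\phi$ is a word in $[y]$ together with vertices of height $<k$ (all fixed by $\phi$), and in particular involves no vertex of any other level-$k$ class. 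Defining $\phi_y$ to agree with $\phi$ on $[y]$ and to fix $X\bs[y]$, the maps $\phi_y$ then pairwise commute (each fixes the letters produced by the others) and satisfy $\phi=\prod_{y\in\cmp(k)}\phi_y$, while the disjointness of the $[y]$ gives trivial intersection of each factor with the product of the rest.

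The step I expect to be the main obstacle is the well-definedness inside \ref{it:st2decomp4}: showing that each $\phi_y$ is a genuine automorphism lying in $\St_y^\v$, not merely a set map. Here I would need a normal-form statement for $\St(\cK)$ — that $\phi\in\St_k^\v$ is a product of elementary automorphisms each supported on a single vertex of $\v(k)$ — together with the verification that $\phi_y$ preserves every admissible subgroup $\GG(Y)$, $Y\in\cK$. The latter reduces to the fact that each admissible set either contains the whole class $[y]$ or is disjoint from it, which must be proved separately for clique-type and independent-set-type classes and again rests on the domination analysis above. Granting this, the assignment $\phi\mapsto(\phi_y)_{y}$ is an isomorphism $\St_k^\v\xrightarrow{\sim}\prod_{y\in\cmp(k)}\St_y^\v$, completing \ref{it:st2decomp4}.
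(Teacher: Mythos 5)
Your proposal rests on a circular foundation. The ``first structural input'' --- that $\St(\cK)$ is generated by inversions and elementary transvections --- is Corollary \ref{cor:stgens} of the paper, and its proof there invokes Theorem \ref{thm:st2decomp} itself (together with Theorems \ref{thm:stxva}, \ref{thm:stxdecomp} and \ref{thm:stxvf}). Only the easy inclusion $\la \Inv,\Tr\ra\le\St(\cK)$ is available before Theorem \ref{thm:st2decomp} is proved; the reverse inclusion is in effect the deepest result of the paper, ultimately resting on peak reduction, and there is no independent source for it (Laurence--Servatius give generators of $\Aut(\GG)$, not of the stabiliser subgroup $\St(\cK)$). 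You use this fact at exactly the two places where the real work of the theorem lies: in \ref{it:st2decomp2}, to conclude $P_{h_\cK}=\St(\cK)$ --- without it you have only decomposed the subgroup generated by the $\St_j^\v$, not $\St(\cK)$ --- and in \ref{it:st2decomp4}, where you yourself flag the well-definedness of $\phi_y$ as the ``main obstacle'' and propose to fill it with a normal-form statement for $\St_k^\v$, which is again the generation fact in disguise. Your remaining steps are sound but are not where the difficulty sits: the normality of $\St_j^\v$ in $P_j$, the trivial intersection $P_{j-1}\cap\St_j^\v=1$, the pairwise commutation of the $\St_y^\v$, and the monotonicity claim ($z\in\ad(x)$ and $z\notin[x]$ imply $\ad(z)\subsetneq\ad(x)$, hence $h(z)<h(x)$) all check out, the last being exactly Lemma \ref{lem:ad0} \ref{it:ad8} combined with \ref{it:adcl1}. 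Note also that your Height Lemma needs no generation fact at all: it follows directly from $x\phi\in\GG(\ad(x))$, which is the definition of $\St(\cK)$, plus monotonicity.

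The missing idea is the paper's restriction construction (Definition \ref{defn:stx2}, Lemma \ref{lem:st2}): for $\phi\in\St(\cK)$, the map $\phi_k$ agreeing with $\phi$ on $\Ad(k)$ and fixing $X\bs\Ad(k)$ is again an automorphism lying in $\St(\cK)$. This is proved directly from the definition of $\St(\cK)$: the endomorphism check uses $b\phi\in\GG(\ad(b))$ and Lemma \ref{lem:ad0} \ref{it:ad11}, and invertibility follows from the composition identity $\phi_k\psi_k=(\phi\psi)_k$, so that $(\phi^{-1})_k$ serves as an inverse --- no appeal to any generating set is made. The resulting retractions $s_k\colon\St(\cK)\to\St_k(\cK)$, restricted from $\St_k$ onto $\St_{k-1}$, have kernel $\St_k^\v$ (Corollary \ref{cor:stdecomp}), which gives surjectivity of the decomposition in \ref{it:st2decomp2} for free; and the same device applied classwise ($\phi\mapsto\phi_y$, agreeing with $\phi$ on $[y]$ and fixing the rest) is precisely what makes your $\phi_y$ in \ref{it:st2decomp4} a genuine element of $\St_y^\v$. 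If you replace every appeal to generation by this retraction argument, your outline closes up; as written, the proposal assumes a consequence of the theorem it sets out to prove.
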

(See Example \ref{ex:admiss0}.)
This leaves the  structure of $\St_{x}^\v$ to be determined. There are two cases
to consider, which depend on the size of a further set, the \emph{closure} of an element $x$ of $X$, defined
as
\[\cl(x)=\cap_{y\in \st(x)} \st(y).\]
(See Section \ref{sec:prelim} for details.) 
As 
$\cl(x)=\ad(x)\cap \st(x)$ we always have $\cl(x)\subseteq \ad(x)$.
If $\cl(x)=\ad(x)$, then   $\GG([x])$ is a free Abelian group and consequently $\St^\v_x$ has the following form, 
where, for positive integers $a,b$, we denote the group of $a\times b$ integer matrices under addition by  $\cM(a,b)$.
\begin{theorem}[\textit{cf.} Theorem \ref{thm:stxva}]\label{thm:stxva0}
  Let $x\in X$ such that $\ad(x)=\cl(x)$. Assume  and  that
  $|\ad(x)|=r$ and $|[x]|=s$. Then $s\le r$ and  
\[\St_x^{\v}(\cK)\cong \GL(s,\ZZ)\ltimes_\theta \cM(s,r-s),\]
where,  
  for $A\in \GL(s,\ZZ)$ and $B\in \cM(s,r-s)$, the automorphism $A\theta$ maps $B$ to $A^{-1}B\in  \cM(s,r-s)$.
\end{theorem}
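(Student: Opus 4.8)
The plan is to realise $\St_x^\v$ concretely as a group of block matrices acting on $\GG(\ad(x))$, exploiting the fact that $\St(\cK)$ preserves this subgroup by definition. First I would record two consequences of the hypothesis $\ad(x)=\cl(x)$. Since $\cl(x)=\cap_{y\in\st(x)}\st(y)\subseteq\st(x)$, any $u,u'\in\cl(x)$ satisfy $u\in\st(x)$, whence $\cl(x)\subseteq\st(u)$ and so $u'\in\st(u)$; thus $\ad(x)=\cl(x)$ is a clique and $\GG(\ad(x))\cong\ZZ^r$ is free abelian. Moreover $[x]\subseteq\cl(x)=\ad(x)$ (for $y\in[x]$ one has $\st(x)\subseteq\st(y)$, so $y\in\cl(x)$), which gives $s\le r$; write $[x]=\{x_1,\dots,x_s\}$ and $\ad(x)\setminus[x]=\{u_1,\dots,u_{r-s}\}$. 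Being contained in the clique $\ad(x)$, the class $[x]$ is of ``closed'' type (an open class of size $\ge 2$ would contain elements not in $\st(x)$), so $\st(x_i)=\st(x)$ for every $x_i\in[x]$.

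Next comes the crucial point. Because $\ad(x)\in\cK$, the defining property of $\St(\cK)$ gives $\GG(\ad(x))\phi=\GG(\ad(x))$ for every $\phi\in\St_x^\v$. Since each $x_i\in\ad(x)$, the images $x_i\phi$ lie in $\GG(\ad(x))\cong\ZZ^r$, while each $u_j$ is fixed (being outside $[x]$). Hence $\phi$ restricts to an automorphism of $\ZZ^r$ fixing the basis vectors $u_1,\dots,u_{r-s}$, so with respect to the ordered basis $x_1,\dots,x_s,u_1,\dots,u_{r-s}$ it is represented, acting on the right, by a matrix $\left(\begin{smallmatrix}A&B\\0&I\end{smallmatrix}\right)$ with $A$ an $s\times s$ and $B$ an $s\times(r-s)$ integer block; invertibility forces $\det A=\pm1$, i.e.\ $A\in\GL(s,\ZZ)$ and $B\in\cM(s,r-s)$. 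Restriction to the invariant subgroup $\GG(\ad(x))$ is a homomorphism, so $\phi\mapsto(A,B)$ is a homomorphism into the group of such block matrices, and it is injective, since an element with $A=I$, $B=0$ fixes every generator of $[x]$ and, by definition of $\St_x^\v$, every generator outside $[x]$, hence is the identity.

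It then remains to prove surjectivity and to read off the semidirect product. For surjectivity I would exhibit generators: the transvections $x_i\mapsto x_iu_j^{\pm1}$ (legitimate since $u_j\in\cl(x)$ gives $\lk(x_i)\subseteq\st(x)\subseteq\st(u_j)$, so $u_j$ dominates $x_i$) realise the elementary matrices generating $\{(I,B)\}$, while the transvections $x_i\mapsto x_ix_k^{\pm1}$ for $k\ne i$ (with $x_k$ dominating $x_i$, as $x_i,x_k$ lie in one class) together with the inversions $x_i\mapsto x_i^{-1}$ realise the elementary and sign-change matrices generating $\GL(s,\ZZ)$ in the block $\{(A,0)\}$. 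Each such automorphism fixes $X\bs[x]$ and is an inversion or elementary transvection, hence lies in $\St_x^\v\subseteq\St(\cK)$; as these images generate the whole block group, $\phi\mapsto(A,B)$ is an isomorphism onto it. Finally $\{(I,B)\}\cong(\cM(s,r-s),+)$ is an abelian normal subgroup with complement $\{(A,0)\}\cong\GL(s,\ZZ)$, and the computation $\left(\begin{smallmatrix}A&0\\0&I\end{smallmatrix}\right)^{-1}\left(\begin{smallmatrix}I&B\\0&I\end{smallmatrix}\right)\left(\begin{smallmatrix}A&0\\0&I\end{smallmatrix}\right)=\left(\begin{smallmatrix}I&A^{-1}B\\0&I\end{smallmatrix}\right)$ identifies the conjugation action with $\theta$, yielding $\St_x^\v\cong\GL(s,\ZZ)\ltimes_\theta\cM(s,r-s)$.

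I expect the main obstacle to be the surjectivity step together with the bookkeeping: one must verify that each transvection invoked is a genuine elementary transvection lying in $\St_x^\v$ (this is exactly where the clique structure of $\ad(x)$ and the domination relations $\lk(x_i)\subseteq\st(u_j)$ and $\lk(x_i)\subseteq\st(x_k)$ are used), and that inversions and transvections on the clique $\GG([x])\cong\ZZ^s$ really do generate all of $\GL(s,\ZZ)$. Pinning the twist as $A^{-1}B$ rather than $AB$ is then not a matter of convention but a genuine feature produced by the conjugation computation above.
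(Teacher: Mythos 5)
Your proof is correct, and it reaches the theorem by a more self-contained route than the paper. Both arguments rest on the same core idea: represent elements of $\St_x^\v$ by integer block matrices $\left(\begin{smallmatrix}A&B\\0&I\end{smallmatrix}\right)$ via their action on $\GG(\ad(x))\cong\ZZ^r$, with the lower blocks forced by the fact that generators outside $[x]$ are fixed, and then read the semidirect product and the twist $B\mapsto A^{-1}B$ off the block multiplication. The difference is in how that representation is obtained. The paper first proves the (not entirely trivial) containment $\St_x^\v(\cK)\le\St(\cL)$, where $\cL=\{\cl(y)\,|\,y\in X\}$, and then imports the machinery of \cite{DKR3}: the restriction homomorphism $\rho_x$ onto $\St_x(\cL)$, the matrix isomorphism $\pi_x$ of \cite[Lemma 2.15]{DKR3}, and the block-triangular description of its image; surjectivity onto the relevant matrix group is then asserted via $\pi_x^{-1}\rho_x^{-1}$ and the generators are identified afterwards. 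You bypass $\St(\cL)$ entirely: since $\ad(x)\in\cK$, any $\phi\in\St_x^\v$ already preserves the single subgroup $\GG(\ad(x))$, and fixing $X\bs[x]$ pointwise gives both the shape of the matrix and injectivity at once; surjectivity you prove constructively, checking that the needed transvections $\tr_{x_i,u_j}$, $\tr_{x_i,x_k}$ and inversions exist (the domination conditions you verify are exactly right, using $\st(x_i)=\st(x)\subseteq\st(u_j)$) and lie in $\St(\cK)$ by the standing fact about inversions and elementary transvections. What the paper's route buys is integration with the $\St(\cL)$ framework it reuses elsewhere; what yours buys is an elementary, self-verifying proof whose explicit generators feed directly into Corollary \ref{cor:stgens}. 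One small repair: your parenthetical ``for $y\in[x]$ one has $\st(x)\subseteq\st(y)$'' is only valid once you know $[x]=[x]^{\st}$, which you deduce afterwards; the clean statement is that $y\in[x]$ implies $\lk(x)\subseteq\st(y)$ (true for both $\sim_{\st}$ and $\sim_{\lk}$), which is equivalent to $y\in\ad(x)$, and then the clique structure of $\ad(x)=\cl(x)$ forces the class to be of closed type as you say.
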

On the other hand, 
if  $\cl(x)$ is a proper subset of $\ad(x)$ 
then  $\GG([x])$ is a free group. In this case  we define $\ad_\out(x)=\ad(x)\bs \cl(x)$
and we have the following decomposition of $\St^\v_x$. 
\begin{theorem}[\textit{cf.} Theorem \ref{thm:stxdecomp}]\label{thm:stxdecomp0}  
Let $x\in X$ such that $\ad(x)\neq \cl(x)$. Assume  and  that
  $|\cl(x)|=q$ and $|[x]|=p$. Then $p\le q$ and $\St^\v_x$ has subgroups $\St_{x,l}^\v$ and $\St_{x,s}^\v$ such that 
\[\St_x^\v= \St_{x,l}^\v \ltimes \St_{x,s}^\v,\] 
\[\St_{x,l}^\v=\{\phi\in \St_x^\v\,|\, y\phi \in \GG([x]\cup \ad_\out(x)), \forall y\in [x]\}\] and
\[\St_{x,s}^\v\cong \cM(p,q-p).\]
\end{theorem}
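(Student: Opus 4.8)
The plan is to describe $\St_x^\v$ through an explicit generating set and then to peel off the commuting transvections as a normal abelian factor. First I would record the local structure at $x$: because $\ad(x)\neq\cl(x)$, the class $[x]$ is of $\lk$-type, so its members are pairwise non-adjacent with a common link. Hence $\GG([x])$ is free of rank $p$, every $y\in[x]$ satisfies $\ad(y)=\ad(x)$, and $[x]\subseteq\ad(x)$ with $[x]\cap\cl(x)=\{x\}$. I would then use the description of $\St(\cK)$ from \cite{DR6} (generation by inversions and elementary transvections) to show that $\St_x^\v$ is generated by the inversions $y\mapsto y^{-1}$ together with the transvections $y\mapsto yz^{\pm1}$, where $y\in[x]$ and $z\in\ad(x)$: these are precisely the elementary automorphisms fixing $X\setminus[x]$ and lying in $\St(\cK)$, membership in $\St(\cK)$ being checked from the domination description $z\in\ad(v)\iff\lk(v)\subseteq\st(z)$ of the admissible sets.

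Next I would separate the transvection generators according to whether the target commutes with $[x]$. A target $z\in\cl(x)\setminus\{x\}$ lies in $\lk(y)$ for every $y\in[x]$, so the transvections $y\mapsto yz$ with such $z$ are \emph{commuting} transvections, whereas the remaining generators (inversions, internal transvections $y\mapsto yy'$ with $y,y'\in[x]$, and transvections into $\ad_\out(x)\setminus[x]$) all map $[x]$ into $\GG([x]\cup\ad_\out(x))$. I would set $\St_{x,s}^\v$ to be the subgroup generated by the commuting transvections and establish three properties. It is abelian, since the targets are fixed by, and commute with, all of $[x]$, so these transvections commute pairwise. It is normal in $\St_x^\v$, which I would verify by conjugating a commuting transvection by each of the other generators and observing that the result is again a product of commuting transvections. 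Finally the homomorphism sending $\phi\in\St_{x,s}^\v$ to the integer matrix recording, for each $y\in[x]$, the $\cl(x)$-exponents of $y\phi$ is injective with image the asserted matrix group, giving $\St_{x,s}^\v\cong\cM(p,q-p)$; identifying the correct $(q-p)$-element column index set, equivalently establishing $p\le q$, is the heart of this step. This is the free-case analogue of the $\GL(s,\ZZ)\ltimes\cM(s,r-s)$ computation behind Theorem \ref{thm:stxva0}.

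It then remains to recognise $\St_{x,l}^\v=\{\phi\in\St_x^\v\mid y\phi\in\GG([x]\cup\ad_\out(x)),\ \forall y\in[x]\}$ as a complement. I would show that $\St_{x,l}^\v$ is a subgroup containing all the non-commuting generators, that $\St_{x,l}^\v\cap\St_{x,s}^\v=1$ (a non-trivial element of $\St_{x,s}^\v$ has a non-zero $\cl(x)$-exponent in some image $y\phi$ and so cannot map $[x]$ into $\GG([x]\cup\ad_\out(x))$, as $\cl(x)\setminus\{x\}$ is disjoint from $[x]\cup\ad_\out(x)$), and that $\St_x^\v=\St_{x,l}^\v\cdot\St_{x,s}^\v$ by projecting off the closure-exponent part of the images. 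Combined with the normality of $\St_{x,s}^\v$, this yields the semidirect decomposition $\St_x^\v=\St_{x,l}^\v\ltimes\St_{x,s}^\v$.

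The step I expect to be the main obstacle is the exact identification $\St_{x,s}^\v\cong\cM(p,q-p)$ together with the inequality $p\le q$. That the commuting transvections generate an abelian normal subgroup and that $\St_{x,l}^\v$ is a complement is essentially formal once the generating set is in hand; but pinning down the rank of the matrix factor requires a careful count of the independent commuting directions that survive inside $\St(\cK)$, and this is controlled by the poset of admissible sets and by the size of $\cl(x)$ relative to $[x]$. I would handle it by choosing a basis of $\GG([x])$ adapted to the domination order and reducing a general element of $\St_{x,s}^\v$ to a normal form in which exactly $q-p$ independent closure coordinates remain, reading off both the isomorphism type and the bound $p\le q$ from that normal form.
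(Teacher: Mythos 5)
Your overall strategy --- isolate the commuting transvections as an abelian normal subgroup and exhibit $\St_{x,l}^\v$ as a complement --- matches the shape of the paper's decomposition, but the foundation you build on is a genuine gap. Everything in your proposal runs through the claim that $\St_x^\v$ is generated by the inversions and elementary transvections it contains, which you attribute to \cite{DR6}. That is not what \cite{DR6} proves: \cite{DR6} shows $\Aut^*(\GG)=\cSt(\cK)$ and that inversions and elementary transvections \emph{lie in} $\St(\cK)$. The generation statement is Corollary \ref{cor:stgens} of the present paper, and its local version for $\St_x^\v$ is Theorem \ref{thm:stgen}; both are proved \emph{after} Theorem \ref{thm:stxdecomp} and \emph{using} it (the proof of Theorem \ref{thm:stgen} cites Theorem \ref{thm:stxdecomp} explicitly), and they rest on the peak-reduction machinery of Proposition \ref{prop:stabtuple} and \cite{Day09}. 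So your argument is circular relative to the paper's development and, read as a standalone proof, it silently assumes a hard McCool/Day-type stabiliser-generation theorem. The gap is avoidable, because no generating set is needed: since $\ad_\innt(x)=\cl(x)\bs[x]$ generates the centre of $\GG(\ad(x))$, one has $\GG(\ad(x))=\GG(\ad_\innt(x))\times\left[\GG([x])\ast\GG(\ad_\out(x))\right]$, so for $\phi\in\St_x^\v$ each image $x_i\phi$ (with $x_i\in[x]$) factors \emph{uniquely} as $w_ic_i$ with $w_i\in\GG([x]\cup\ad_\out(x))$ and $c_i\in\GG(\ad_\innt(x))$; the product $\phi_1$ of commuting transvections realising the $c_i$ lies in $\St_{x,s}^\v$, and $\phi_0=\phi_1^{-1}\phi\in\St_{x,l}^\v$, giving $\phi=\phi_1\phi_0$ at once. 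Normality of $\St_{x,s}^\v$ then needs only conjugation of its generators $\tr_{x_k,x_j}$ by arbitrary $\phi\in\St_{x,l}^\v$ (an exponent-sum computation), which suffices because $\St_{x,s}^\v$ is abelian and $\St_x^\v=\St_{x,l}^\v\St_{x,s}^\v$. This is exactly your phrase ``projecting off the closure-exponent part''; promoted to the main argument, it removes any appeal to a generating set.

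Separately, the step you call the heart of the proof --- pinning down a $(q-p)$-element column index set and deducing $p\le q$ --- cannot be completed as you set it up, and no normal-form argument will fix it. By your own (correct) observation that $[x]\cap\cl(x)=\{x\}$, the commuting targets form $\ad_\innt(x)=\cl(x)\bs\{x\}$, a set of size $|\cl(x)|-1$; hence the group generated by the commuting transvections is free abelian of rank $p\left(|\cl(x)|-1\right)$, which equals $p(q-p)$ under the reading $q=|\cl(x)|$ only when $p=1$. The resolution is notational, and it is how Theorem \ref{thm:stxdecomp} states the result: there $q$ is fixed by the indexing $[x]=\{x_i\,|\,1\le i\le p\}$, $\ad_\innt(x)=\{x_i\,|\,p+1\le i\le q\}$, i.e.\ $q=p+|\cl(x)|-1$, under which $p\le q$ is immediate and the $q-p$ columns are simply the elements of $\ad_\innt(x)$. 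So what you flag as the principal difficulty is not where the mathematical content lies; the content is in the splitting itself, and the obstacle you would actually hit there is the mismatch between $|\cl(x)|-1$ and $|\cl(x)|-p$.
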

 Although we do not have a structural decomposition we 
give a finite presentation  of $\St_{x,l}^\v$ 
in Theorem \ref{thm:stxvf}. Combining these theorems allows us to find generators of $\St(\cK)$, giving our final result.
\begin{corol}\label{cor:stgens} %[\textit{cf.} Corollary \ref{cor:stgens}]
  The subgroup  of $\Aut(\GG_\G)$ generated by the set of all  inversions and elementary transvections is precisely 
  $\St(\cK)$. Moreover, $\St(\cK)$ has a finite presentation with
  these generators. 
\end{corol}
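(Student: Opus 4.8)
The plan is to read off both assertions from the structural decomposition already assembled in Theorems~\ref{thm:st2decomp}, \ref{thm:stxva0} and \ref{thm:stxdecomp0}, together with the presentation of $\St_{x,l}^\v$ in Theorem~\ref{thm:stxvf}, so that the Corollary becomes an assembly result rather than a fresh computation.

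For the first assertion one inclusion is immediate: every inversion and every elementary transvection lies in $\St(\cK)$ (as observed in the Introduction), so the subgroup $T$ they generate satisfies $T\le \St(\cK)$. For the reverse inclusion I would show $\St(\cK)$ is generated by inversions and transvections by descending through the decomposition. By Theorem~\ref{thm:st2decomp}\ref{it:st2decomp2} and~\ref{it:st2decomp4}, $\St(\cK)$ is generated by the subgroups $\St_x^\v$ as $x$ runs over the transversal $\cmp$, so it suffices to treat a single $\St_x^\v$. If $\ad(x)=\cl(x)$, then by Theorem~\ref{thm:stxva0} we have $\St_x^\v\cong \GL(s,\ZZ)\ltimes_\theta \cM(s,r-s)$; here $\GL(s,\ZZ)$ is generated by the elementary matrices $I+E_{ij}$ $(i\neq j)$ together with a single sign change $\operatorname{diag}(-1,1,\dots,1)$, and $\cM(s,r-s)\cong\ZZ^{s(r-s)}$ is generated by its coordinate elements, and I would check that under the isomorphism these generators are exactly (the restrictions to $[x]$ of) elementary transvections and inversions of $\GG_\G$. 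If $\ad(x)\neq\cl(x)$, then by Theorem~\ref{thm:stxdecomp0} we have $\St_x^\v=\St_{x,l}^\v\ltimes\St_{x,s}^\v$ with $\St_{x,s}^\v\cong\cM(p,q-p)$ generated by transvections, while $\St_{x,l}^\v$ is generated by inversions and transvections via the presentation of Theorem~\ref{thm:stxvf}. Collecting these generating sets gives $\St(\cK)=T$.

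For finite presentability I would invoke the standard fact that if $G=H\ltimes N$ with $H$ and $N$ finitely presented and the action of each generator of $H$ on each generator of $N$ is recorded as a word in the generators of $N$, then $G$ is finitely presented, with relators those of $H$, those of $N$, and the finitely many action relations; the analogous statement holds for finite direct products. Each base factor $\St_x^\v$ is finitely presented: in the case $\ad(x)=\cl(x)$ because $\GL(s,\ZZ)$ is finitely presented (classical) and $\cM(s,r-s)$ is free abelian of finite rank, with the action $\theta$ given explicitly in Theorem~\ref{thm:stxva0}; and in the case $\ad(x)\neq\cl(x)$ because $\St_{x,l}^\v$ is finitely presented by Theorem~\ref{thm:stxvf} and $\cM(p,q-p)$ is free abelian of finite rank. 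Hence each level group $\St_k^\v=\prod_{y\in\cmp(k)}\St_y^\v$ is a finite direct product of finitely presented groups, and the tower of Theorem~\ref{thm:st2decomp}\ref{it:st2decomp2} presents $\St(\cK)$ as an iterated semidirect product of the $\St_k^\v$, yielding a finite presentation of $\St(\cK)$. Since inversions and elementary transvections form a finite generating set and finite presentability is independent of the choice of finite generating set, $\St(\cK)$ admits a finite presentation on these generators.

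The genuine work lies not in this abstract existence but in making the tower explicit: for each semidirect product in the chain one must compute the action of each generator of the acting factor on each generator of the normal factor and re-express the result as a word in the inversion and transvection generators. This is the step I expect to be the main obstacle, since it requires the explicit action formulae underlying Theorems~\ref{thm:st2decomp}--\ref{thm:stxdecomp0}; it is, however, a finite and effective computation bounded in size by the commutation graph $\G$, which is exactly what yields the algorithmic statement of the abstract. A subsidiary point to record carefully is that the matrix generators of the factors in Theorems~\ref{thm:stxva0} and~\ref{thm:stxdecomp0} really are the images of honest inversions and elementary transvections of $\GG_\G$, rather than merely abstract generators of the isomorphic matrix groups; this is built into the construction of those isomorphisms.
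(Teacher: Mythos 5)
Your proposal is correct and takes essentially the same route as the paper's own proof, which likewise obtains both assertions by combining Theorems \ref{thm:st2decomp}, \ref{thm:stxva}, \ref{thm:stxdecomp} and \ref{thm:stxvf}: the easy inclusion from Laurence--Servatius, generation of each factor $\St_x^\v$ by inversions and elementary transvections read off from those theorems, and a finite presentation assembled from the tower of semidirect and direct products. Your write-up merely makes explicit the assembly steps (standard presentations of semidirect and direct products, and invariance of finite presentability under change of finite generating set via Tietze transformations) that the paper leaves implicit in the phrase ``from the constructions appearing in these theorems a finite presentation may be built.''
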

Indeed, such a finite presentation of $\St(\cK)$ may be explicitly constructed
from the decomposition  appearing in the theorems above. 
(See Examples \ref{ex:admiss0}, \ref{ex:admiss1}, \ref{ex:admiss2} and  \ref{ex:admiss3}.)

In Section \ref{sec:prelim} we cover the necessary background on partially commutative groups, admissible sets and closure, and generators 
of the automorphism group of $\GG_\G$. Section \ref{sec:stk} contains the proof of Theorem \ref{thm:st2decomp}. Section \ref{sec:stxv} is
concerned with $\St^\v_x$. In Section \ref{sec:stxvc} a more detailed version of Theorem \ref{thm:stxva0} is stated and proved,
namely, Theorem \ref{thm:stxva0}. In Section \ref{sec:stxva} we prove Theorem \ref{thm:stxdecomp}, which is  a more 
detailed version of Theorem \ref{thm:stxdecomp0} and then define generators and relations for $\St_{x,l}^\v$. The remainder of the 
paper consists of the proof of Theorem \ref{thm:stxvf}. For this we use peak reduction, constructing a  modification of  
the process of \cite{Day09} to 
work within the given generating set of $\St_{x,l}^\v$. 
% $\St_{x}^\v$ can be decomposed as a direct sum of a general 
% linear group and a free Abelian group. Otherwise,   $\cl(x)$ is a proper subset of $\ad(x)$,
% in which case  $\GG([x])$ is a free group, and $\St_{x}^\v$ is generated by the elements of the automorphism group of $\GG([x])$, 
%together with a partially commutative group; and from this we 
% obtain a canonical presentation. 
%Th
%In \cite{DKR5} and \cite{DR6} subgroups of $\Aut(\GG_\G)$ stablising   
% subgroups of $\GG_\G$ generated by certain subsets of 
%the vertices of $\G$ were studied and 
%Denote by 
%$\Aut(\G)$
%the  automorphism  group of $G(\G)$  and by $V(\G)= X = \{x_1, \ldots x_n \}$ 
%be the
%set of vertices of $\G$.
%\begin{proof}
%This follows from Theorem \ref{thm:stkdecomp} and the previous Theorem.
%\end{proof}
\section{Preliminaries}\label{sec:prelim}
Throughout this article, let $\GG=\GG_\G$ be the partially commutative group with commutation graph $\G$ and presentation $\la X|R\ra$, as above. For 
$Y\subset X$ %denote by 
the subgroup $\GG(Y)$  of $\GG$ generated by $Y$ is also a partially commutative group with commutation graph equal to 
the full subgraph of $\G$ induced by $Y$ \cite{Baudisch77}. 

For $w\in \GG$ denote by $\supp(w)$  the minimal subset $Y$ of $X$ such that 
$w\in \GG(Y)$. 
The \emph{length} $|w|$ of an element $w$ of $\GG$ is the minimum of the lengths of words in $F(X)$ representing
$w\in\GG$. If $u$ is a word of $F(X)$ of $F(X)$-length equal to the length $|u|$ of $u$ in $\GG$, then we say $u$ is a \emph{minimal} word. If $u$ and $v$ are
minimal words such that $|uv|=|u|+|v|$, we write $uv=u\circ v$. 

We extend definitions of star and link from single elements of $X$ to subsets of $X$: for $Y\subset X$  define the \emph{star} of $Y$ to be 
$\st(Y)=\cap_{x\in Y}\st(y)$. 
By convention we set $\st(\nul) = X$. 
%It follows directly from the definition that 
%$Y\subseteq \st(\st(Y))$ and $\st(Y)= \st(\st(\st(Y)))$ \cite[Lemma 2.1]{DKR3}.
We define the {\em closure} of $Y$ to be 
$\cl(Y)=\st(\st(Y))$.
The closure operator on $\G$ satisfies, among other things, the properties that $\cl(Y)$ is a simplex
(i.e.  the full subgraph on $\cl(Y)$ 
is  a
complete graph) and for $x\in X$, the closure $\cl(x)$ is the maximal simplex %(subset $Y\subseteq X$ such the full subgraph on $Y$ is  a complete graph) 
contained in $\st(x)$. 
%,   $Y\subseteq \cl(Y)$ %,
%$\cl(\st(Y))=\st(Y)$ and
%$\cl(\cl(Y))=\cl(Y)$
%Moreover if $Y_1\subseteq Y_2\subseteq X$
%then $\cl(Y_1)\subseteq \cl(Y_2)$.  
  We set \[\cL=\{\cl(x)\,|\, x\in X\}.\] 
(See \cite[Lemma 2.4]{DKR3} for further details.)
%\begin{defn}
%A subset $Y$ of $X$ is called {\em closed} {\rm(}with respect to $\G${\rm)}
%if $Y=\cl(Y)$.
%Denote by $L=L(\G)$ the set of all closed subsets of $X$.
%\end{defn}

%Then $\cl(Y)\in L$, for all $Y\subseteq X$ and $U\in L$ if
%and only if $U=V^\perp$, for some $V\subseteq X$ \cite[Lemma 2.7]{DKR3}.
%For basic properties; see previous papers .... .\\ \textbf{ add some chat }\\
As in \cite[Lemma 2.5]{DR6}, we have % $\cl(x)\subseteq \ad(x)$ and
$\ad(x)=\cl(x)$ if and only if $\ad(x)\subseteq \st(x)$; from which
it follows that   $\ad(x)=\cl(x)$ if and only if $\ad(x)$ is a simplex.
\begin{comment}% now in intro
For  
$x\in  X$ define \emph{link} of $x$ to be $\lk(x)=\st(x)\bs \{x\}$ and 
 the \emph{admissible set}  
$\ad(x)$ of $x$ in $\G$ to be $\st(\lk(x))$. 
We set 
\[\cK=\{\ad(x)\,|\, x\in X\}.\]
Then $\cl(x)=\st(\st(x))=\st(\lk(x))\cap \st(x)=\ad(x)\cap \st(x)$.
\end{comment}
%
The following straightforward lemmas are proved in \cite{DR6}.
\begin{lemma}\label{lem:ad0}
For all $x,y,z \in X$, the following hold.
\be[label=(\roman*)]
%\item\label{it:ad1} If $U\subseteq V$ then $\ad(V)\subseteq \ad(U)$.
%\item\label{it:ad10} $\ad(U)\cap \ad(V)=\ad(U\cup V)$. 
%\item\label{it:ad2} $\cl(x)=\ad(x) \cap x^\perp$ so 
%$\ad(x)=\cl(x)$ if and only if $\ad(x) \subseteq x^\perp\$.
%\item\label{it:ad3} $\st(x)\subseteq \ad(x)$ if and only if $\st(x)$ generates a complete subgraph.
%\item\label{it:ad5} If $\lk(x)=\lk(z)$ then $\ad(x)=\ad(z)$.
%\item\label{it:ad6} If $\st(x)=\st(z)$ then $\ad(x)=\ad(z)$.
%\item\label{it:ad7} If $\ad(x)=\ad(z)$ then either $\st(x)=\st(z)$ or  $\lk(x)=\lk(z)$. 
\item\label{it:ad8} If $y\in \ad(x)$ then $\ad(y)\subseteq \ad(x)$.
%\item\label{it:ad9} $\ad(U)=\cup_{y\in \ad(U)} \ad(y)$.
%%%%%\item\label{it:ad12} If $\cl(y)<\cl(x)$ then $\ad(x)\nleq \ad(y)$.
%\item\label{it:ad13} If $\cl(x)=\ad(x)$ then $\cl(y)=\ad(y)$, for
%all $y\in \ad(x)$. 
%\item\label{it:ad14} $\ad(y)\subseteq \ad(x)$ if and only if $\cl(y)
%\subseteq\cl(x)$ and the first inclusion is strict if and only if
%the second one is. 
\item\label{it:ad11} If $[x,y]=1$ then $[\GG(\ad(x)),\GG(\ad(y))]=1$.
\item\label{it:ad12}  $\ad(y)\subseteq \ad(x)$ if and only if
$\lk(x) \subset \st(y)$.
\item\label{it:adcl1} $\ad(x)=\ad(z)$ if and only if  $z\in [x]$.%, for all $x,z\in X$.%and 
\item\label{it:adcl2}
$[x]=
\ad(x)\bs(\cup\{\ad(y)|y\in \ad(x) \textrm{ and }\ad(y) \subsetneq \ad(x)\})$.%, for all $x,z\in X$.
\ee
\end{lemma}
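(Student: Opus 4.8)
The plan is to reduce each assertion to an elementary statement about the operators $\lk$, $\st$, and $\ad=\st\circ\lk$ on $X$, using three facts repeatedly: adjacency is symmetric, so $u\in\st(v)\iff v\in\st(u)$; the operator $\st$ is inclusion-reversing and satisfies the Galois-type equivalence $A\subseteq\st(B)\iff B\subseteq\st(A)$; and $x\in\ad(x)$ for every $x$, since every $w\in\lk(x)$ satisfies $x\in\st(w)$. The dependencies I would follow are: establish \ref{it:ad12} and read off \ref{it:ad8} from it; prove \ref{it:ad11} by the same circle of ideas; prove \ref{it:adcl1} directly; and obtain \ref{it:adcl2} formally from \ref{it:ad8} and \ref{it:adcl1}.

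For \ref{it:ad12}, note first that $y\in\ad(x)$ is, by symmetry of adjacency, the same as $\lk(x)\subseteq\st(y)$. The forward implication then follows from $y\in\ad(y)\subseteq\ad(x)$, which gives $y\in\ad(x)$. For the converse I take $u\in\ad(y)=\st(\lk(y))$ and $z\in\lk(x)$ and show $u\in\st(z)$: since $z\in\st(y)$, either $z\in\lk(y)$, giving $u\in\st(z)$ at once, or $z=y$. The case $z=y$ is the crux: then $x\in\lk(y)$, so $u\in\st(x)$, whence either $u=x\in\st(y)$ (as $x$ is adjacent to $y$) or $u\in\lk(x)\subseteq\st(y)$. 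Part \ref{it:ad8} is now the chain $y\in\ad(x)\Rightarrow\lk(x)\subseteq\st(y)\Rightarrow\ad(y)\subseteq\ad(x)$. For \ref{it:ad11} I reduce to showing that generators $u\in\ad(x)$ and $v\in\ad(y)$ commute when $x$ is adjacent to $y$; since $y\in\lk(x)$ gives $u\in\st(y)$, either $u\in\lk(y)$ and then $v\in\st(\lk(y))\subseteq\st(u)$, or $u=y$, and then $v\in\st(x)$ forces $v=x$ or $v\in\lk(x)\subseteq\st(y)$ (using $y=u\in\ad(x)$), so $[u,v]=1$ in every case.

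For \ref{it:adcl1}, since $[x]$ is the $\sim$-class of $x$, membership $z\in[x]$ unwinds to $\st(z)=\st(x)$ or $\lk(z)=\lk(x)$. Each alternative yields $\ad(x)=\ad(z)$: the second immediately, and the first (for $z\neq x$) after writing $\lk(x)=\st(x)\setminus\{x\}$ and $\lk(z)=\st(z)\setminus\{z\}$ and observing that the only factor by which $\st(\lk(x))$ and $\st(\lk(z))$ can differ is $\st(z)$ versus $\st(x)$, which coincide. Conversely, if $\ad(x)=\ad(z)$ then $x\in\ad(z)$ and $z\in\ad(x)$ give $\lk(z)\subseteq\st(x)$ and $\lk(x)\subseteq\st(z)$; distinguishing whether $x,z$ are adjacent forces $\st(x)=\st(z)$ in the adjacent case and $\lk(x)=\lk(z)$ otherwise. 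Finally \ref{it:adcl2} is formal: with $U$ the displayed union, $[x]\subseteq\ad(x)$ and $[x]\cap U=\emptyset$ follow from \ref{it:adcl1} and \ref{it:ad8} (a common element would satisfy both $\ad(z)=\ad(x)$ and $\ad(z)\subsetneq\ad(x)$), and conversely any $z\in\ad(x)\setminus U$ has $\ad(z)\subseteq\ad(x)$ by \ref{it:ad8}, with $\ad(z)\subsetneq\ad(x)$ impossible since it would place $z\in\ad(z)\subseteq U$; hence $\ad(z)=\ad(x)$ and $z\in[x]$ by \ref{it:adcl1}.

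The main obstacle is the converse half of \ref{it:ad12}, together with the twin step inside \ref{it:ad11}: inclusion-reversal of $\st$ alone does not pass from $\lk(x)\subseteq\st(y)$ to $\ad(y)\subseteq\ad(x)$, and the direct argument fails precisely when $x$ and $y$ are adjacent, because then $\ad(y)$ need not lie in $\st(y)$. The resolution is exactly the case $z=y$ above, where adjacency of $x$ and $y$ is fed back through $x\in\lk(y)$ together with the hypothesis $\lk(x)\subseteq\st(y)$ to exclude the apparent counterexamples. A secondary point I would record is that the defining relation for $\sim$ is genuinely transitive --- equivalently, that $\st(x)=\st(y)$ together with $\lk(y)=\lk(z)$ is impossible for distinct $x,y,z$ (adjacency of $x,y$ and $\lk(y)=\lk(z)$ force $x$ adjacent to $z$, hence $z\in\st(x)=\st(y)$, contradicting $z\notin\st(y)$) --- which is what legitimises the description of $[x]$ used in \ref{it:adcl1}.
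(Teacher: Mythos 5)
Your proof is correct in all five parts. Note, however, that the paper itself gives no argument for this lemma: it is stated with the remark that the proofs appear in \cite{DR6}, so the comparison here is between your self-contained argument and a bare citation. Your route is the natural one and it holds up: the Galois-type equivalence $A\subseteq \st(B) \iff B\subseteq \st(A)$, together with $x\in\ad(x)$, reduces everything to graph-theoretic bookkeeping, and you correctly isolate the only non-formal point, namely the passage from $\lk(x)\subseteq\st(y)$ to $\ad(y)\subseteq\ad(x)$ when $x$ and $y$ are adjacent (your case $z=y$), which you resolve by feeding the hypothesis $\lk(x)\subseteq\st(y)$ back into the argument. The same device correctly handles the case $u=y$ inside \ref{it:ad11}; there your reading of the hypothesis $[x,y]=1$ as adjacency of distinct vertices is the intended one, since for $x=y$ the assertion would require $\GG(\ad(x))$ to be abelian, which fails in general (e.g.\ for $\ad(i)$ in Example \ref{ex:admiss}). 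The case analysis in \ref{it:adcl1} (adjacency forces $\st(x)=\st(z)$, non-adjacency forces $\lk(x)=\lk(z)$) is exactly right, and \ref{it:adcl2} is indeed formal given \ref{it:ad8} and \ref{it:adcl1}. Finally, your closing observation --- that a mixed chain $\st(x)=\st(y)$, $\lk(y)=\lk(z)$ is impossible for distinct vertices --- is a genuine and necessary check, not a pedantic one: the union of two equivalence relations need not be transitive, and the paper asserts that $\sim$ is an equivalence relation without justification, so this remark is what legitimises the description of $[x]$ on which \ref{it:adcl1} and \ref{it:adcl2} rest.
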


Let $\sim_{\st}$ be the relation on $X$ given by $x\sim_{\st} y$ if
and only if $\st(x)=\st(y)$ and  $\sim_{\lk}$ be the relation given by
$x\sim_{\lk} y$ if
and only if 
$\lk(x)=\lk(y)$. 
 These are equivalence relations
and the equivalence classes of $x$ under $\sim_{\st}$ and 
$\sim_{\lk}$  are denoted by $[x]^{\st}$ and $[x]^{\lk}$, respectively.
Moreover $\sim=\sim_{\st}\cup \sim_{\lk}$. 
%Note that if $|[x]^{\st}|>1$ then $[x]^{\lk}=\{x\}$ and 
%the same is true on interchanging $\st$ and $\lk$. Therefore the
%relation 
%$\sim$,
%given by $x\sim y$ if and only if $x\sim_\perp y$ or $x\sim_{\lk}s y$, 
%is an equivalence relation. Denote the equivalance class of 
%$x$ under $\sim$ by $[x]$. Then $x\sim y$ if and
%only if $x\sim_\perp y$ or $x\sim_{\lk}s y$ and $[x]=[x]^\perp\cup[x]^{\lk}s$.
 In addition (see \cite[Lemma 2.7]{DR6} for details) 
if $\ad(x)=\cl(x)$ then $[x]=[x]^{\st}$ and otherwise $[x]=[x]^{\lk}$. 
\begin{comment}
We shall also use the following fact.
\begin{lemma}[\textit{cf.} {\cite[Lemma 2.6]{DR6}}]\label{lem:ad1}
~
\be[label=(\roman*)]
\item\label{it:adcl1_1} $\ad(x)=\ad(z)$ if and only if  $z\in [x]$, %and 
\item\label{it:adcl2_1}
$[x]=
\ad(x)\bs(\cup\{\ad(y)|y\in \ad(x) \textrm{ and }\ad(y) \subsetneq \ad(x)\})$,
\ee 
for all $x,z\in X$.
\end{lemma}
\end{comment}

 Let $L=X\cup X^{-1}$ and for $x\in L$ let $\v(x)=X\cap \{x,x^{-1}\}$. 
We extend the notation for stars, links, closures and admissible sets from $X$ to $L$ as follows.
\biz
\item
For $x\in L$, let $\st(x)$, $\lk(x)$, $[x]$, $\ad(x)$ and $\cl(x)$ denote 
$\st(\v(x))$, $\lk(\v(x))$, $[\v(x)]$, $\ad(\v(x))$ and $\cl(\v(x))$, respectively, and similarly for $[x]^{\st}$ and $[x]^{\lk}$. 
\item
  For $o$ equal to any one of the operators $\st$, $\lk$, $[~]$, $\ad$ or $\cl$ above, 
let $o_L(x)$ denote $o(x)\cup o(x)^{-1}$; so  $st_L(x)=\st(x)\cup \st(x)^{-1}$, etc..
\eiz
\subsection{Generators for $\Aut(\GG)$}\label{sec:genaut}

First we describe the  conditions  under which elementary transvections and vertex conjugating automorphisms exist, 
then we define the subgroup $\Aut(\G^\cmp)$
and finally 
we extend the definitions of Laurence and Servatius to give a larger generating set, which is 
convenient for peak reduction proofs.

For $x,y\in L$, with $x\neq y^{\pm 1}$ there exists an elementary transvection in $\Aut(\GG)$ mapping $x$ to $xy$ if and only if 
$\lk(x)\subseteq \st(y)$ (see for example \cite{servatius89}). %Such an element is called an  \emph{elementary transvection}.  
Given $y\in L$ and $T\subset L\bs\{ y^{\pm 1}\}$ such that $T\cap T^{-1}=\nul$ and 
$\lk(t)\subseteq \st(y)$, for all $t\in T$;  
the automorphism $\tr_{L,y}=\prod_{t\in T}\tr_{t,y}$ is called 
a \emph{transvection}. 
 
Let $x$ in $L$ and $C\subseteq X\bs \st(x)$. 
Then  there exists an automorphism of $\GG$ 
mapping $c\in C$ to $x^{-1}cx$, and fixing all other elements of $X$, if and only if  $C$ is the vertex set of a union of connected components of 
$\G\bs \st(x)$; the graph obtained from $\G$ by removing all vertices of $\st(x)$ and all their incident edges.
(see for example \cite{servatius89}). %Such an automorphism is called an \emph{vertex conjugating} automorphism and 
We denote this vertex conjugating automorphism by $\a_{C,x}$. 
If $C$ 
consists of the vertices of  a single connected component of $\G\bs \st(x)$ then $\a_{C,x}$ is called an \emph{elementary} 
vertex conjugating automorphism.  

For ease of reference we make the following definitions. 
\begin{definition}
Denote by 
\be
\item  $\Aut(\G^\pm)$ the  subgroup of automorphisms which permute $L$;
\item
$\Inv=\Inv(\GG)$ the set of inversions;
\item 
$\Tr=\Tr(\GG)$ the set of elementary transvections;
\item $\LInn=\LInn(\GG)$ the set of elementary vertex conjugating automorphisms. 
\ee
The set of all transvections is denoted $\Tr^+$ and the set of all vertex conjugating automorphisms by $\LInn^+$. 
\end{definition}

The group  $\Aut^\cmp$, mentioned in the introduction depends on the choice of 
an ordering on each of the sets $[x]$.  Choose  a total order $<$ on  each set $[x]\subseteq X$. 
Then $\Aut^\cmp$ is defined to be the group of automorphisms
of $\GG_\G$ which permute $L$ and respect the order on $[x]$, for all $x\in X$. That is, an automorphism $\phi$ belongs to $\Aut^\cmp$ if 
it belongs to $\Aut(\G^\pm)$, and whenever  $u,v\in [x]$, with $u<v$, then $u\phi <v\phi$. (See \cite{DR6} for details.) As 
$\Aut(\GG)=\Aut(\G^\pm)\ltimes\Aut^*(\GG)$ the focus of attention is the subgroup $\Aut^*(\GG)=\la \Inv,\Tr,\LInn\ra$.   
\begin{comment} 
\begin{definition}\label{defn:elia}
Let $x\in X$, let $C$ be the vertex set of 
a union of connected components of $\G_{\st(x)}$ and
let $\e=\pm 1$. 
The automorphism $\a_{C,x^\e}$ given by
\[
y\mapsto
\left\{
\begin{array}{ll}
y^{x^\e}, &\textrm{ if } y\in C\\
y, &\textrm{ otherwise}
\end{array}
\right.
\]
is called an {\em elementary conjugating automorphism} of $\G$.
\end{definition}
\end{comment}
  
 In the sequel we shall make use of a larger set  than the Laurence-Servatius generators, known as  Whitehead automorphisms, 
to generate $\Aut(\GG)$. These  originate in work of Whitehead, and were developed by 
Rapaport, Higgins and Lyndon, and McCool, to study  Automorphisms of Free groups, using peak reduction. Day \cite{Day09} defined Whitehead 
automorphisms over partially commutative groups and used  them in peak reduction arguments, to construct finite presentations of 
their automorphism groups.  
\begin{definition}\label{def:wh}
A \emph{Whitehead automorphism} is an element of $\Aut(\GG)$ of one of two types.
\begin{description}
\item[Type 1.] Elements of  $\Aut(\G^{\pm})$.
\item[Type 2.] Elements of the form $\a_{C,x}\tr_{T,x}$, where $\a_{C,x}\in \LInn^+$, 
$\tr_{T,x}\in \Tr^+$, and $(C\cup C^{-1})\cap T=\nul$. 
\end{description}
In the definition of Type 2 elements we allow $\a_{C,x}$ or $\tr_{T,x}$, but not both, to 
be trivial; so $\LInn^+$ and $\Tr^+$ are sets of Whitehead automorphisms of Type 2. 
\begin{description}
\item[Notation] 
The Whitehead automorphism $\a_{C,x}\tr_{T,x}$ of Type 2 is denoted $(A,x)$, where $A$ is any subset of $L$ such that 
\be
\item $x\in A$ and $x^{-1}\notin A$;
\item $A\bs \{x\}$ is the disjoint union of the set $C\cup C^{-1}$, the set $T$ and a set $U\cup U^{-1}$, where $U$ is  some subset $\lk(x)\subset X$, 
such that $(U\cup U^{-1})\cap T=\nul$. (We always assume $C\cup T$ is not empty.)
\ee
\end{description}

The set of all Whitehead automorphisms is denoted $\W$. 
\end{definition}
\begin{remark}\label{rem:whrem}
\be
%\item 
%The notation for Type 2 automorphisms is unique, up change of subset $U^{\pm 1}$, where $U\subset x^{\lk}s$ and $U^{\pm 1}\cap T=\nul$. 
\item \label{it:whrem1}
The notation $(A,x)$ uniquely determines an automorphism $\phi$ say, although there may be more than one expression 
of $\phi$ in terms of Laurence-Servatius generators. %the form given in Definition \ref{def:wh}, depending on the choice of the set $U$ above.  
For example, if $\lk(x)\subseteq \st(y)$ then $(\{x,x^{-1},y\},y)=\tr_{x,y}\tr_{x^{-1},y}=\a_{\{x,x^{-1}\},y}$. 
\item\label{it:whrem2} For $x\in L$ and $A\subset L$ the pair $(A,x)$ denotes a Whitehead automorphism if and only if %$x\in A$, $x^{-1}\notin A$ and 
$A=(C\cup U)^{\pm 1}\cup T\cup\{x\}$, for some  $C\cup U\subset X$ and $T\subset L$ such that $T\cap (C\cup U)^{\pm 1}=\nul$, $T\cap T^{-1}=\nul$,  
$C$ is a union of connected components of $\G\bs \st(x)$, $U\subseteq \lk(x)$, and $T\subseteq \ad_L(x)\bs \{x^{\pm 1}\}$. In this 
case $(A,x)=\a_{C,x}\tr_{T,x}$. 
\ee
\end{remark}

Day \cite{Day09} defines a Whitehead automorphism $\phi$ to be 
\be[label=(\roman*),ref=(\roman*)]
\item
\emph{long range} if either $\phi$ is of Type 1; or $\phi$ is of Type 2, $\phi=(A,a)$ and $y\phi=y$, for all $y\in \st(a)$, and
\item
\emph{short range} if it is of 
Type 2,  $\phi=(A,a)$  and $y\phi =y$, for all $y\in X\bs \st(a)$.
\ee
\begin{remark}\label{rem:lsfac}
In general, if  $\phi=(A,a)$ is of Type 2,  and we set  $A_s=A\cap \st(a)_L$ and $A_l=A\bs A_s$ then 
$\phi_s=(A_s,a)$ is short range, $\phi_l=(A_l\cup \{a\},a)$ is long range and $\phi=\phi_s\phi_l$.  Hence every
Whitehead automorphism factors uniquely as  a product of a short range and a long range automorphism.
\end{remark}
\begin{definition}
The set of short range automorphisms is denoted $\W_s$ and the set
of long range automorphisms is denoted $\W_l$.  
\end{definition}
As the Laurence-Servatius generators are all either short or long range 
Whitehead automorphisms it follows that $\Aut(\GG)$ is generated by the union
$\W_s\cup \W_l$ of  short and long range Whitehead automorphisms. 

 Day \cite{Day09} shows that $\Aut(\GG)$ has a finite presentation with generators $\W_s\cup \W_l$ and a set of relations R,  
partitioned into subsets R1--R7, which we shall refer to as DR1--DR7 in the sequel. 
\section{The structure of $\St(\cK)$}\label{sec:stk}

The decomposition of $\St(\cK)$ reflects the structure of  the partial order, by inclusion, on the  set $\cK$ %=\{\ad(x)|x\in X\}$, 
which we stratify as 
follows.
%height cut from here 
\begin{comment}
For  $x\in X$ and $0\le k\le h(x)$, let the \emph{level $k$ vertex set} of $x$ be
\[\V(x,k)=\{y\in X| \ad(y)\subseteq \ad(x) \textrm{ and } h(y)=k\},\]
and let the \emph{level $k$ admissible set} of $x$ be 
\[\ad(x,k)=\cup_{i=0}^k\cup_{y\in \V(x,i)} \ad(y).\] 

With this notation $\ad(x,h(x))=\ad(x)$, $\V(x,h(x))=[x]$,  
 \[\ad(x,0)=\V(x,0)=\{y\in X| \ad(y)\subseteq \ad(x) \textrm{ and } 
\ad(y)=[y]\},\]   and it follows from Lemma \ref{lem:ad1} 
that 
\begin{align*}
\ad(x,k)&=\bigcup_{y\in \V(x,k)} \left[[y] \cup \bigcup_{z\in \V(y,k-1)}\ad(z)\right]\cup \ad(x,k-1)\\
&=\bigcup_{y\in \V(x,k)} [y] \cup \ad(x,k-1)\\
&=
\V(x,k)\cup \ad(x,k-1)
\end{align*}
%\textbf{not true} \\
and $\V(x,k)\cap \ad(x,k-1)=\emptyset$. 
Moreover, if $\cmp$ is a set of representatives of equivalence
classes of $\sim$, then setting $\cmp (x,k)=\cmp\cap\V(x,k)$, we have $\V(x,k)=\bigcup_{y\in \cmp(x,k)}[y]$.
\begin{definition}
Let $x\in X$ and let $k\in \NN$ such that $0\le k\le h(x)$. Define 
the \emph{level $k$ stabiliser} of $x$ to be
\[St_{x,k}=\{\phi\in \St(\cK)| y\phi=y, \textrm{ for all } y\notin \ad(x,k)\},\]
the \emph{level $k$ vertex stabiliser} of $x$ to be
\[St_{x,k}^{\V}=\{\phi\in \St_{x,k}| y\phi=y, \textrm{ for all } y\notin \V(x,k)\}.\]
We call $\St_{x,h(x)}$ the \emph{stabiliser} of $\ad(x)$ and denote it $\St_x$.
\end{definition}
\end{comment}
% \textcolor{red}{Now a global version}
%v(k) cut from here
%
%and let
Let
the \emph{level $k$ admissible set} of $X$ be 
\[\Ad(k)=\bigcup_{i=0}^k\bigcup_{y\in \V(i)} \ad(y).\] 

With this notation $\Ad(h_\cK)=X$, $\V(h_\cK)=\{y\in X| h(y)=h_\cK\}$,   
 \[\Ad(0)=\V(0)=\{y\in X|  
\ad(y)=[y]\},\]   and it follows from Lemma \ref{lem:ad0} \ref{it:adcl2}  
that 
\begin{align*}
\Ad(k)&=\bigcup_{y\in \V(k)} \left[[y] \cup \bigcup_{z\in \V(k-1)}\ad(z)\right]\cup \Ad(k-1)\\
&=\bigcup_{y\in \V(k)} [y] \cup \Ad(k-1)\\
&=
\V(k)\cup \Ad(k-1)
\end{align*}
%\textbf{not true} \\
and $\V(k)\cap \Ad(k-1)=\emptyset$. 
Moreover, if $\cmp$ is a transversal for $\sim$  (a set of representatives of equivalence
classes), then setting $\cmp (k)=\cmp\cap\V(k)$, we have $\V(k)=\bigcup_{y\in \cmp(k)}[y]$.
\begin{example}\label{ex:admiss}
Let $\G$ be the graph:
\begin{center}
\begin{tikzpicture}[ele/.style={fill=black,circle,minimum width=4pt,inner sep=1pt},every fit/.style={ellipse,draw,inner sep=-2pt}]
  \node[ele,label=left:$a$] (a1) at (0,3) {};
  \node[ele,label=left:$b$] (a2) at (0,1) {};    

  \node[ele,,label=above:$e$] (b1) at (2,3) {};
  \node[ele,,label={[shift={(.2,-0.75)}]$d$}] (b2) at (2,2) {};
  \node[ele,,label=below:$c$] (b3) at (2,1) {};
  
  \node[ele,,label=right:$f$] (c1) at (4,3) {};
  \node[ele,,label=right:$g$] (c2) at (4,2) {};
  \node[ele,,label=below:$h$] (c3) at (4,1) {};

  \node[ele,,label=right:$i$] (d3) at (6,1) {};

  \draw[thick] (a1) -- (b1);
  \draw[thick] (a1) -- (b2);
  \draw[thick] (a1) -- (b3);
 
  \draw[thick] (a2) -- (b1);
  \draw[thick] (a2) -- (b2);
  \draw[thick] (a2) -- (b3);

  \draw[thick] (b1) -- (b2);
  \draw[thick] (b1) -- (c1);
  \draw[thick] (b1) -- (c2);
  \draw[thick] (b1) -- (c3);

  \draw[thick] (b2) -- (b3);
  \draw[thick] (b2) -- (c1);
  \draw[thick] (b2) -- (c2);
  \draw[thick] (b2) -- (c3);

  \draw[thick] (b3) -- (c3);

  \draw[thick] (c1) -- (c2);

  \draw[thick] (c3) -- (d3);
 
 \end{tikzpicture}
\end{center}

Then $[a]=\{a,b\}$, $[f]=\{f,g\}$, all other equivalence classes contain a single element, 
$\ad(a)=\ad(b)=\{a,b,d,h\}$, $\ad(c)=\{c,d,e\}$, $\ad(d)=\{d\}$, $\ad(e)=\{d,e\}$, $\ad(f)=\ad(g)=\{d,e,f,g\}$, $\ad(h)=\{h\}$ and 
$\ad(i)=\{c,d,e,h,i\}$; with inclusions as shown in the following diagram. 
\begin{center}
\begin{tikzpicture}[ele/.style={fill=black,circle,minimum width=4pt,inner sep=1pt},every fit/.style={ellipse,draw,inner sep=-2pt}]
  \node (a4) at (0,1) {$\ad(h)$};

  \node (b1) at (1,4) {$\ad(i)$};    
  \node (b3) at (1,2) {$\ad(a)$};    

  \node (c2) at (2,3) {$\ad(c)$};    
  \node (c4) at (2,1) {$\ad(d)$};    

  \node (d3) at (3,2) {$\ad(e)$};    

  \node (e2) at (4,3) {$\ad(f)$};   
 
 \draw[thick] (a4) -- (b1);
 %\draw[thick,shorten <=2pt,shorten >=2pt] (a4) -- (b1);
 \draw[thick] (a4) -- (b3);

 \draw[thick] (b1) -- (c2);
 \draw[thick] (b3) -- (c4);

 \draw[thick] (c2) -- (d3);
 \draw[thick] (c4) -- (d3);

 \draw[thick] (d3) -- (e2);
 \end{tikzpicture}
\end{center}
We take a transversal $\cmp=\{a,c,d,e,f,h,i\}$ for $\sim$. 
Thus $h_\cK(\GG)=3=h(i)$, $\v(3)=\cmp(3)=\{i\}$, $\v(2)=\{c,f,g\}$, $\cmp(2)=\{c,f\}$, $\v(1)=\{a,b,e\}$, $\cmp(1)=\{a,e\}$ and $\v(0)=\cmp(0)=\{d,h\}$.   
Finally $\Ad(3)=\ad(i)\cup \ad(f)\cup \ad(a)=X$, $\Ad(2)=\ad(c)\cup \ad(f)\cup \ad(a)=X\bs\{i\}$, $\Ad(1)=\ad(a)\cup \ad(e)=\{a,b,d,e,h\}$ and $\Ad(0)=\ad(d)\cup \ad(h)=\{d,h\}$.  
\end{example}
%\begin{definition}

Let  $k\in \NN$ such that $0\le k\le h(x)$. Define 
the \emph{level $k$ stabiliser} of $\cK$ to be
\[St_{k}(\cK)=\{\phi\in \St(\cK)| y\phi=y, \textrm{ for all } y\in X\bs\Ad(k)\},.\]
\begin{comment}
 the \emph{level $k$ vertex stabiliser} of $\cK$ to be
\[St_{k}^{\V}(\cK)=\{\phi\in \St_{k}(\cK)| y\phi=y, \textrm{ for all } y\in X\bs \V(k)\}\]
and the \emph{vertex stabiliser} of $x\in X$ to be 
\[St_{x}^{\V}(\cK)=\{\phi\in \St(\cK)| y\phi=y, \textrm{ for all } y\in X\bs [x]\}.\]
\end{comment}
%We call $\St_{x,h(x)}$ the \emph{stabiliser} of $\ad(x)$ and denote it $\St_x$.
%\end{definition}
With this definition, for $x\in X$ of height $k$, we have
\[\St_{x}^{\V}(\cK)\le St_{k}^{\V}(\cK)\le St_{k}(\cK).\]
We shall abbreviate $\St(\cK)$, $\St_{k}(\cK)$, $\St_{k}^\V(\cK)$ and $\St_x^\V(\cK)$ to 
$\St$, $\St_{k}$, $\St_{k}^\V$ and $\St_x^\V$, respectively, when no ambiguity arises.

\begin{definition}\label{defn:stx2}
Let $h_\cK=h_\cK(\GG)$ and let $\phi\in \St(\cK)$.  For
each $k$ such that $0\le k\le h_\cK$ define the \emph{level $k$ restriction} 
$\phi_k$ of $\phi$ to be the map given by
\begin{equation*}
z\phi_k=
\left\{
\begin{array}{ll}
z, & \textrm{ if } z\notin \Ad(k)\\
z\phi, &\textrm{ if } z\in \Ad(k),
\end{array}
\right.
\end{equation*}
for all $z\in X$.
\end{definition}

\begin{lemma}\label{lem:st2}
For $\phi\in \St(\cK)$ and $0\le k\le h_\cK$ 
the map $\phi_k$ extends uniquely to an 
element of $\St(\cK)$ (also denoted $\phi_k$). 
Moreover 
the 
 map $s_{k}$, such that $\phi\in \St(\cK)$ is mapped to $\phi_k$ is a retraction 
of $\St(\cK)$ onto 
$\St_{k}(\cK)$.
\end{lemma}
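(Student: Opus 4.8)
The plan is to verify first that $\phi_k$ respects the defining relations of $\GG$, so that it extends uniquely to an endomorphism, and then to package the assignment $\phi\mapsto\phi_k$ into a retraction. The key structural fact I would record at the outset is that $\St(\cK)$ preserves $\Ad(k)$ at the level of supports. For any $u\in X$, since $\ad(u)\in\cK$ we have $u\phi\in\GG(\ad(u))$, whence $\supp(u\phi)\subseteq\ad(u)$. Moreover, if $y\in\Ad(k)$ then $y\in\ad(z)$ for some $z$ with $h(z)\le k$, and Lemma \ref{lem:ad0}\ref{it:ad8} gives $\ad(y)\subseteq\ad(z)\subseteq\Ad(k)$. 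Combining these, $\supp(y\phi)\subseteq\ad(y)\subseteq\Ad(k)$ for every $y\in\Ad(k)$, so $\GG(\Ad(k))\phi\subseteq\GG(\Ad(k))$.

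With this in hand I would check that $\phi_k$ preserves each relation $[u,v]=1$, i.e.\ each pair with $v\in\lk(u)$. If $u,v\in\Ad(k)$ then $u\phi_k=u\phi$ and $v\phi_k=v\phi$ commute because $\phi$ is an automorphism, and if $u,v\notin\Ad(k)$ the images are $u,v$ themselves. The only delicate case is $u\in\Ad(k)$, $v\notin\Ad(k)$, where one must show $[u\phi,v]=1$. Here $\supp(u\phi)\subseteq\ad(u)$, and since $v\in\lk(u)$ the definition $\ad(u)=\cap_{w\in\lk(u)}\st(w)$ gives $\ad(u)\subseteq\st(v)$. Thus each generator $w\in\supp(u\phi)$ lies in $\st(v)$; as $w\in\Ad(k)$ while $v\notin\Ad(k)$ we have $w\ne v$, so in fact $w\in\lk(v)$ and $[w,v]=1$. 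Hence $u\phi$ commutes with $v$, $\phi_k$ preserves all relations, and so extends uniquely to an endomorphism of $\GG$. This mixed case is the main obstacle, and it is precisely where the admissible-set combinatorics does the work.

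Next I would show $s_k$ is a homomorphism by verifying $(\phi\psi)_k=\phi_k\psi_k$ on generators: on $z\notin\Ad(k)$ both sides fix $z$, while on $z\in\Ad(k)$ we have $z\phi_k\psi_k=(z\phi)\psi_k$, and since $z\phi\in\GG(\Ad(k))$ by the support fact and the endomorphisms $\psi_k$ and $\psi$ agree on the generators in $\Ad(k)$, this equals $(z\phi)\psi=z(\phi\psi)$. Because $\id_k=\id$, applying this with $\psi=\phi^{-1}$ gives $\phi_k(\phi^{-1})_k=\id$ (and symmetrically), so $\phi_k\in\Aut(\GG)$ with inverse $(\phi^{-1})_k$. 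To see $\phi_k\in\St(\cK)$, fix $w\in X$ and note that for each $y\in\ad(w)$ either $y\in\Ad(k)$, whence $y\phi_k=y\phi\in\GG(\ad(y))\subseteq\GG(\ad(w))$ by Lemma \ref{lem:ad0}\ref{it:ad8}, or $y\notin\Ad(k)$, whence $y\phi_k=y\in\GG(\ad(w))$; hence $\GG(\ad(w))\phi_k\subseteq\GG(\ad(w))$, and the same applied to $(\phi_k)^{-1}=(\phi^{-1})_k$ forces equality.

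Finally, $\phi_k\in\St_k(\cK)$ is immediate, since $\phi_k$ fixes every $y\in X\bs\Ad(k)$ by construction, so $s_k$ maps $\St(\cK)$ into $\St_k(\cK)$. Conversely, if $\psi\in\St_k(\cK)$ then $\psi$ already fixes each $y\notin\Ad(k)$, so $\psi_k$ and $\psi$ agree on all of $X$ and $\psi_k=\psi$; that is, $s_k$ restricts to the identity on $\St_k(\cK)$. Together with the homomorphism property, this shows $s_k$ is a retraction of $\St(\cK)$ onto $\St_k(\cK)$, as required.
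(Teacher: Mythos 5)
Your proof is correct and follows essentially the same route as the paper's: a case analysis on the defining relations (with the mixed case $u\in\Ad(k)$, $v\notin\Ad(k)$ as the crux), multiplicativity of $s_k$ checked on generators, the inverse obtained as $(\phi^{-1})_k$, and the identity of $s_k$ on $\St_{k}(\cK)$ giving the retraction. The only differences are cosmetic: where the paper invokes Lemma \ref{lem:ad0} \ref{it:ad11} for the mixed commutation case you unfold the definition of $\ad(u)$ directly, and you spell out the verification that $\phi_k$ stabilises every $\GG(\ad(w))$ (hence lies in $\St(\cK)$), a point the paper leaves implicit.
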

%\begin{vnr}
\begin{proof}
First we show that if $\phi\in \St(\cK)$ then $\phi_k$ extends to an 
endomorphism of $\GG$, which is necessarily unique as the images of generators are determined. 
Suppose that $a,b\in X$ such that $[a,b]=1$.
It suffices to show that $[a\phi_k,b\phi_k]=1$. 
If either $\{a,b\}\subseteq \Ad(k)$ or $\{a,b\}\cap \Ad(k)=\nul$ then
clearly $[a\phi_k,b\phi_k]=1$.  This leaves the case $a\notin \Ad(k)$,
$b\in \Ad(k)$.  As $\phi \in \St(\cK)$ we have
$b\phi\in \GG(\ad(b))$ and so, from Lemma \ref{lem:ad0} \ref{it:ad11},
$[a\phi_k,b\phi_k]=[a,b\phi]=1$. Therefore $\phi_k$ is an endomorphism
of $\GG$. 

To see that $\phi_k$ is an automorphism 
suppose first that $\phi,\psi\in \St(\cK)$. If $z\notin \Ad(k)$ then $z\phi_k\psi_k=z$, while for $z\in \Ad(k)$,
$z\phi_k\psi_k=(z\phi)\psi_k$. By definition, $z\in \Ad(k)$ implies $z\in \ad(y)$, for some 
$y\in X$, with $h(y)\le k$. Since $\phi\in St(\cK)$ we have $z\phi\in \GG(\ad(y))$, so $z\phi\in \GG(\Ad(k))$ and 
 $\supp(z\phi)\subseteq\Ad(k)$. Therefore $z\phi_k\psi_k=(z\phi)\psi$. In particular, $z\phi_k(\phi^{-1})_k=z\phi\phi^{-1}=z=z(\phi^{-1})_k\phi_k$, so 
 $\phi_k$ has
inverse $(\phi^{-1})_k$ and 
 is therefore an automorphism. 
 Moreover, for $z\notin\Ad(k)$ we have $z\phi_k\psi_k=z=z(\phi\psi)_k$ and for $z\in \Ad(k)$ we have shown that 
$z\phi_k\psi_k=z(\phi\psi)=z(\phi\psi)_k$; so  the map $s_k:\phi\mapsto \phi_k$ is an  
endomorphism of $\St(\cK)$. By definition $\phi_k\in \St_k(\cK)$. 
 If $\a\in \St_{k}(\cK)$ then $\a s_{k}=\a_k=\a$, so $s_{k}$ has image 
$\St_{k}(\cK)$ which is a retract of $\St(\cK)$, as claimed. 
\end{proof}

\begin{corol}\label{cor:stdecomp}
Let $1\le k\le h_\cK$. The restriction %$s^k_{k-1}$ 
 of $s_{k-1}:\St\maps \St_{k-1}$ to $\St_{k}$ is 
a retraction onto $\St_{k-1}$ with kernel $\St_{k}^\v$. Therefore $\St_{k}=  \St_{k}^\v\rtimes \St_{k-1}$. 
\end{corol}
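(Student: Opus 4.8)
The plan is to deduce the whole statement from Lemma~\ref{lem:st2}, which already establishes that $s_{k-1}$ is a retraction of $\St$ onto $\St_{k-1}$, together with the stratification identity $\Ad(k)=\V(k)\cup \Ad(k-1)$ in which the union is \emph{disjoint}. First I would record the two inclusions $\St_{k-1}\le \St_k$ and $\St_k^\v\le \St_k$. Both follow because $\Ad(k-1)\subseteq \Ad(k)$ and $\V(k)\subseteq \Ad(k)$ yield $X\bs \Ad(k)\subseteq X\bs \Ad(k-1)$ and $X\bs \Ad(k)\subseteq X\bs \V(k)$, so an automorphism fixing the larger complement a fortiori fixes $X\bs \Ad(k)$.

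Next I would verify that the restriction of $s_{k-1}$ to $\St_k$ is a retraction onto $\St_{k-1}$. Since $s_{k-1}$ is a homomorphism by Lemma~\ref{lem:st2}, its restriction to the subgroup $\St_k$ is again a homomorphism; its image contains $\St_{k-1}$ (as $s_{k-1}$ fixes $\St_{k-1}$ pointwise) and is contained in $\St s_{k-1}=\St_{k-1}$, hence equals $\St_{k-1}$; and it restricts to the identity on $\St_{k-1}$. This part is essentially immediate.

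The one substantive step is identifying the kernel. For $\phi\in \St_k$, the definition of the level $(k-1)$ restriction gives $\phi s_{k-1}=\phi_{k-1}=\id$ precisely when $z\phi=z$ for every $z\in \Ad(k-1)$. Combining this with the defining condition of $\St_k$, that $\phi$ fixes every $z\in X\bs \Ad(k)$, shows that $\phi$ lies in the kernel if and only if it fixes every vertex of $(X\bs \Ad(k))\cup \Ad(k-1)$. Because $\Ad(k)=\V(k)\cup \Ad(k-1)$ with $\V(k)\cap \Ad(k-1)=\nul$, this set is exactly $X\bs \V(k)$, so the kernel equals $\St_k^\v$. This is the only place where care is needed, and it is precisely here that the disjointness of the stratification is used.

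Finally I would invoke the standard fact that a retraction $s\colon G\maps H$ onto a subgroup $H\le G$ splits $G$ as $G=\ker s\rtimes H$: the kernel is normal, it meets $H$ trivially (an element of $H\cap \ker s$ equals its own image under $s$, namely the identity), and every $g\in G$ factors as $g=\bigl(g\,(gs)^{-1}\bigr)(gs)$ with the first factor in $\ker s$ and the second in $H$. Applying this to the retraction $s_{k-1}\colon \St_k\maps \St_{k-1}$ analysed above, whose kernel is $\St_k^\v$, yields $\St_k=\St_k^\v\rtimes \St_{k-1}$, as required.
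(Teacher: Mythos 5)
Your proof is correct and follows essentially the same route as the paper: restrict the retraction $s_{k-1}$ of Lemma \ref{lem:st2} to $\St_k$, identify its kernel as $\St_k^\v$ via the disjoint decomposition $\Ad(k)=\V(k)\sqcup\Ad(k-1)$, and conclude the semidirect product splitting. You merely spell out in more detail the kernel computation and the standard retraction-splitting fact that the paper leaves implicit.
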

\begin{proof}
As $\Ad(k-1)\subseteq \Ad(k)$, we have  $\St_{k-1}\le \St_{k}\le \St$ and as $s_{k-1}$ is a retraction of $\St$ onto $\St_{k-1}$,  the
restriction of $s_{k-1}$ to $\St_{k}$ is surjective, and so also a retraction. Denote this
restriction by $s^k_{k-1}$. Since 
$\Ad(k)=\v(k)\sqcup \Ad(k-1)$, from the definition, $\ker(s^k_{k-1})=
\St^\v_{k}$. 
\end{proof}
%%%thm:st2decomp cut from here
\begin{proof}[Proof of Theorem \ref{thm:st2decomp}]
\ref{it:st2decomp2} follows from Corollary \ref{cor:stdecomp}. 
To prove 
\ref{it:st2decomp4} first note that if $y,z\in \cmp(k)$ with $y\neq z$ then 
$[y]\cap [z]=\nul$, so  $\St_{y}^\v\cap \St_{z}^\v=1$. %here here
We claim next that, in this case, % if $y,z\in \cmp(x,k)$ with $y\neq z$ then 
$[\St_{y}^\v,\St_{z}^\v]=1$. Suppose that $\phi_y\in\St_{y}^\v$
and $\phi_z\in  \St_{z}^\v$. Then for $u\in[y]$ there is $w\in \GG(\ad(y))$
such that $u\phi_y=w$. We have, from Lemma \ref{lem:ad0} \ref{it:ad8} and \ref{it:adcl1},  $[z]\cap\ad(y)=\nul$, so $v\phi_z=v$, for
all $v\in \supp(w)$, and thus $w\phi_z=w$ and $u\phi_y\phi_z=w$. On the other hand $u\phi_z\phi_y=u\phi_y
=w$. Similarly, for $u\in [z]$ we have $u\phi_z\phi_y=u\phi_y\phi_z$. 
For all other $u\in X$ both $u\phi_y=u$ and $u\phi_z=u$, so 
$[\phi_y,\phi_z]=1$, as claimed. Therefore  $\prod_{y\in \cmp(k)}\St_{y}^\v\le \St_{k}^\v$. 

Now let $\phi\in \St_{k}^\v$ and let $y\in \cmp(k)$. 
Define a map $\phi_y$ from $X$ to $\GG$ by
\begin{equation*}
z\phi_y=
\left\{
\begin{array}{ll}
z\phi, & \textrm{ if } z\in[y] 
%\textrm{ for some } y\in \cmp(x,k)
\\
z, &\textrm{ otherwise}. %\in \ad(x)_\out. 
\end{array}
\right.
\end{equation*} 
As in the case of $\phi_k$ in the proof of Lemma \ref{lem:st2}, to see that $\phi_y$ is a homomorphism, 
we need only check that if $a,b\in X$ such that 
$[a,b]=1$, $a\in [y]$ and $b\notin [y]$ then $[a\phi_y,b]=1$. In this situation,  since $a\in [y]$ implies $\ad(a)=\ad(y)$, we
have $[\GG(\ad(y)),b]=1$, and since $\phi\in \St_k^\v$ we have $a\phi_y=a\phi\in \GG(\ad(y))$, so  $[a\phi_y,b]=1$, as required.
 Moreover, if $\phi,\psi\in \St_k^\v$ and $a\in [y]$, then $a\phi_y\psi_y=(a\phi)\psi_y$. For $u\in [y]$ we have $u\psi_y=u\psi$, while for 
$u\in \ad(y)\bs [y]$, since $h(u)<h(y)$, we have $u\psi=u=u\psi_y$. Hence for all $u\in \ad(y)$ we have $u\psi_y=u\psi$ and therefore $(a\phi)\psi_y=a\phi\psi$. 
It follows, as in the proof of  Lemma \ref{lem:st2}, that $\phi_y$ is an automorphism of $\GG$, 
 and by definition $\phi_y\in \St_y^\V$. 

Define $\phi'=\prod_{y\in \cmp(k)} \phi_y$ and for $z\in \v(k)$ let 
$\bar z$ denote the unique element of $\cmp(k)$ such that $z\sim \bar z$. From the remark above Example \ref{ex:admiss}, 
$\v(k)=\bigcup_{y\in \cmp(k)}[y]$   so, 
for all $z\in X$,  
\begin{equation*}
z\phi'=
\left\{
\begin{array}{ll}
z\phi_{\bar z}=z\phi, & \textrm{ if } z\in \v(k) 
%\textrm{ for some } y\in \cmp(x,k)
\\
z, &\textrm{ otherwise}. %\in \ad(x)_\out. 
\end{array}
\right.%\}
%=z\phi,
\end{equation*}
Since $\phi\in \St_k^\v$ it follows that $\phi=\phi'$. 
Therefore $\St_{k}^\v\le \prod_{y\in \cmp(k)}\St_{y}^\v$. 
\end{proof}
%%%%%%%%%%%%%%%%%%%%%%%%%%%%%%%%%%%%%%
%\textcolor{red}{Global version done.  \\[1em]}
%%%%%%%%%%%%%%%%%%%%%%%%%%%%%%%%%%%%%%%%
\begin{example}\label{ex:admiss0}
Continuing Example \ref{ex:admiss}, we have 
\[\St(\cK)=\St_3^\v\rtimes(\St_2^\v\rtimes (\St_{1}^\v\rtimes \St_{0}^\v)),\]
where 
\begin{align*}
\St^\v_3 & =\St^\v_i,\\
\St^\v_2 & =\St^\v_c\times \St^\v_f, \\
\St^\v_1 & =\St^\v_a\times \St^\v_e\textrm{ and }\\
\St^\v_0 & =\St^\v_d\times \St^\v_h.
\end{align*}
\end{example}
%%%%%%%%%%%%%%%%%%%%%%%%%%%%%%%%%%%%%%

\section{The structure of $\St_{x}^\v$}\label{sec:stxv}
As pointed out in the introduction, the structure of $\St_{x}^\v$ depends on the
difference between $\cl(x)$ and $\ad(x)$.
\begin{comment}
%
% type of the equivalence class of $x$ under the relation $\sim$. 
%If  $[x]$ is a simplex of more than one vertex, so  $\GG([x])$ is a free Abelian group of rank at least $2$, then 
If $\cl(x)=\ad(x)$, so  $\GG([x])$ is a free Abelian group, then 
 $\St_{x}^\v$ can be decomposed as a direct sum of a general 
linear group and a free Abelian group. %This is the case for example when $[x]$ is a simplex of more than one vertex.
 Otherwise, $\cl(x)$ is a proper subset of $\ad(x)$ %the full subgraph on $[x]$ is a null graph
and $\GG([x])$ is a free group.
In this case  $\St_{x}^\v$ is generated by the elements of the automorphism group of $\GG([x])$, 
together with a partially commutative group; and from this we 
 obtain a canonical presentation. 
\end{comment}
\subsection{$\cl(x)=\ad(x)$}\label{sec:stxvc}
In case  $\cl(x)=\ad(x)$ a description of the structure of $\St^\v_x$ may be obtained by applying 
 the results of  \cite{DKR3}, 
%$[x]$ is a simplex of more than one vertex, then $\ad(x)\subseteq \cl(x)$ so $\cl(x)=\ad(x)\cap \cl(x)=\cl(x)$. 
%then $\St_x(\cK)=\{\phi\in \St(\cK)\,|\, y\phi = y, \textrm{ for all }y\in X\bs \cl(x)\}$. These subgroups are  studied in \cite{DKR3};
where the analogue of $\St(\cK)$, for sets $\cl(x)$ instead of $\ad(x)$ is investigated.  In more detail, 
in \cite{DKR3}, the subgroups \[\St(\cL)=\{\phi\in\Aut(\GG)\,|\, \GG(\cl(x))\phi=\GG(\cl(x)), \textrm{ for all }x\in X\}\] and 
\[\St^{\conj}(\cL)=\{\phi\in\Aut(\GG)\,|\,\forall x\in X, \exists  
g_x\in \GG, \textrm{ such that } \GG(\cl(x))\phi=\GG(\cl(x))^{g_x}\}
\]
of $\Aut(\GG)$ are defined and  
\begin{itemize}
\item  it is shown  that $\St^{\conj}(\cL)=\St(\cL)\ltimes \Conj(\GG)$ \cite[Theorem 2.20]{DKR3}; and 
\item in Section 2.6,  that $\St(\cL)$ is isomorphic to a subgroup of 
$\GL(|X|,\ZZ)$ generated by upper block-triangular matrices, with diagonal blocks corresponding to the equivalence classes 
$[x]$, of elements of $x\in X$,  
together with  
a subgroup of the unipotent upper triangular matrices $U(|X|,\ZZ)$,  of nilpotency class equal to the centraliser dimension of $\GG$.
\item 
For $x\in X$ and $\phi\in \St(\cL)$, the restriction of $\phi$ to $\GG(\cl(x))$ is an automorphism of $\GG(\cl(x))$ denoted $\phi_x$. 
Define the subgroup \[\St_x(\cL)=\{\phi_x|\phi \in \St(\cL)\}\] of $\Aut(\GG(\cl(x)))$. Then the map $\rho_x:\St(\cL)\maps\St_x(\cL)$ sending $\phi$ to $\phi_x$, is 
a surjective homomorphism \cite[Lemma 2.15]{DKR3}.  
\end{itemize}
%%%%%%Now describe matrix repres of St_x(L)
As $\cl(x)$ is a simplex, $\GG(\cl(x))$ is finitely generated free Abelian of rank $|\cl(x)|$;
 and for all $y\in \cl(x)$, we have $[y]=[y]^{\st}$.  Let $\cl(x)=\{x_1,\cdots,x_r\}$ and 
$[x]=\{x_i\,|1\le i\le s\}$, where $s\le r$. If $\phi\in \St_x(\cL)$ then, for $1\le i\le r$, we have
 $x_i=x_1^{a_{i,1}}\cdots x_r^{a_{i,r}}$, for some integers $a_{i,j}$. Therefore $\phi$ corresponds to the $r\times r$ integer 
matrix $[\phi]=(a_{i,j})$, when matrices act on the right on row vectors. Moreover, as shown in \cite{DKR3}, $[\phi]$ is an upper  block-triangular matrix, with diagonal blocks corresponding
to the equivalence classes of elements of $\cl(x)$. More precisely, let $\{y_i\,|\,1\le i\le m\}$ be a  transversal for the equivalence relation
$\sim$ restricted to $\cl(x)$ (and assume $y_1=x$). 
%representative of the equivalence class $[x_i]$, with $y_1\in [x]=[x_1]$. 
Then $\cl(x)=\cup_{i=1}^m [y_i]$ and $[\phi]$ has 
\be[label=(\roman*),ref=(\roman*)]
\item\label{it:astx1} $m$ diagonal blocks $A_1,\ldots, A_m$, 
where $A_i\in \GL(|[y_i]|,\ZZ)$ and 
\item\label{it:astx2} $a_{i,j}=0$ if $i>j$ and $a_{i,j}$ is not in the $i$th block $A_i$ of $[\phi]$. 
\ee
%Conversely, such a matrix determines an automorphism $\phi$ belonging to $\St_x(\cL)$. 
Let $\cS_x$ denote the set of matrices satisfying the
two conditions above.  
From \cite[Lemma 2.15]{DKR3},  the map $\pi_x$  %:\St_x(\cL)\maps \cS_x$, 
such that $\phi\pi_x = [\phi]$ is
 a isomorphism from $\St_x(\cL)$ to the subgroup $\cS_x$ of $\GL(r,\ZZ)$. 

  Also, writing $A=[\phi]$ and $A_D$ for the block-diagonal matrix which has diagonal blocks $A_1,\ldots ,A_m$ and zeros elsewhere, 
we have $A_D\in \prod_{i=1}^m\GL(|[y_i]|,\ZZ)$ and $A_D^{-1}A$ is a unipotent upper triangular matrix $A_U$: that is an element of $U(r,\ZZ)$,
satisfying \ref{it:astx1} and \ref{it:astx2} above, but with $A_i$ equal to the identity matrix, for all $i$. It follows that 
 $(A_U-I)^m=0$, so the subgroup  $\cS_U=\{A_U\,|\, A=[\phi], \phi\in \St_x\}$ is a nilpotent subgroup of $\GL(r,\ZZ)$ of class $m-1$. 
Furthermore (\cite[Lemma 2.18]{DKR3}) setting $\cS_D=\{A_D\,|\, A=[\phi], \phi\in \St_x\}$, we have $\cS_x=\cS_D\ltimes \cS_U$ with 
$\cS_D=\prod_{i=1}^m\GL(|[y_i]|,\ZZ)$. (\emph{Errata:}  
In  
 \cite[Lemma 2.18]{DKR3}, the equality for $D_Y$ should be $D_Y=\prod_{i=1}^m\GL(|[v_i]_{\perp}|,\ZZ)$ in both statement and proof.)

%%%%%% now return to  case in hand

Note that, for all $z\in X$, 
 $y\in \ad(z)$ implies $y\in \cl(y)\subseteq \ad(y)\subseteq \ad(z)$, so  $\ad(z)=\cup_{y\in \ad(z)}\cl(y)$, and it follows that 
$\St(\cL)\subseteq \St(\cK)$. 
 Returning to $x$ such that $\cl(x)=\ad(x)$, we claim that in this case $\St_x^\v(\cK)$ is a subgroup of $\St(\cL)$.  
To see this, suppose that $\phi\in \St_x^\v(\cK)$ and $y\in X$. If $y\in \ad(x)$ then $\ad(y)\subseteq \ad(x)$, which is 
a simplex, from which it follows that $\ad(y)=\cl(y)$. Hence, as $\phi\in \St(\cK)$, we have $\GG(\cl(y))\phi=\GG(\ad(y))\phi=\GG(\ad(y))
=\GG(\cl(y))$. On the other hand if $y\notin \ad(x)$ let $u\in \cl(y)$. If $u\notin [x]$ then $u\phi=u\in \cl(y)$. If $u\in [x]$ then
$[x]\subseteq \cl(y)$ so $u\in \ad(x)=\cl(x)\subseteq \cl(y)$. Hence $u\phi\in \GG(\ad(x))\subseteq \GG(\cl(y))$. In both cases
 $u\phi\in \GG(\cl(y))$. The same arguments apply to $\phi^{-1}$, and it follows that $\GG(\cl(y))\phi=\GG(\cl(y))$, completing the proof 
of the claim. 

As $\St_x^\v(\cK)\le \St(\cL)$ we may consider the restriction of the homomorphism $\rho_x$ above to $\St_x^\v$. This restriction maps
$\St_x^\v$ isomorphically to its image in $\St_x$. Indeed, if $\phi,\phi'\in  \St_x^\v$ are such that $\phi\rho_x=\phi'\rho_x$ then
$y\phi\rho_x=y\phi'\rho_x$, for all $y\in \cl(x)$, so $y\phi\rho_x=y\phi'\rho_x$ for all $y\in [x]$, and it follows that $\phi=\phi'$. 
Therefore, the composition $\rho_x\pi_x$ maps $\St_x^\v$ isomorphically to its image in $\cS_x$, which we call $\cS_x^\v$. Now 
let $\phi\in \St_x^\v$, let $[\phi]=\phi\rho_x\pi_x$ and write $[\phi]=(a_{i,j})_{i,j=1}^r$. Then $(a_{i,j})_{i,j=1}^r$ satisfies satisfies \ref{it:astx1} and
\ref{it:astx2} above. Moreover as 
%
%Here we are concerned with $\St_x^\v(\cK)$ and so consider its image in $M_x$. If 
$\phi\in \St_x^\v$, for $i>s$, we have $x_i\phi=x_i$, 
 so $a_{i,i}=1$ and $a_{i,j}=0$, for $i\neq j$. Hence $A_i$ is the identity matrix for $1<i\le m$. Thus $(a_{i,j})_{i,j=1}^r$ satisfies 
\be[resume,label=(\roman*),ref=(\roman*)]
\item\label{it:vastx1} $A_1=(a_{i,j})_{i,j=1}^s$ is in $\GL(|[x]|,\ZZ)$;
\item\label{it:vastx2} $a_{i,i}=1$, for $i>s$, and 
\item\label{it:vastx3} 
$a_{i,j}=0$ if $i\neq j$  and $i>s$. 
\ee
Conversely, any matrix $A$ satisfying \ref{it:vastx1}, \ref{it:vastx2} and \ref{it:vastx3} determines a unique element
$A\pi_x^{-1}\rho_x^{-1}$ of $\St_x^\v$. 
%From
%the above it follows that $\phi\maps [\phi]$ is a monomorphism from $\St_x^\v$ onto its image, which we denote $M_x^\v$. 
 If $A\in \cS_x^\v$ then the matrix $A_D$ obtained from $A$ by setting $a_{i,j}=0$, for $(i,j)$ such that $1\le i\le s$ and $j>s$, 
is uniquely determined by the block $A_1$, while $A_U=A_{D}^{-1}A$ satisfies $(A_U-I)^2=0$. This gives the following theorem, in which
for $1\le i, j\le n$ and $i\neq j$, 
\begin{itemize}
\item
$E^n_{i,j}$ is an $n\times n$ square matrix, with $1$'s on the leading diagonal, 
a $1$ in position $i,j$, and zeros elsewhere; and 
\item $O^n_i$ is 
an $n\times n$ square diagonal matrix with $1$'s on the leading diagonal except for row $i$ which has diagonal entry $-1$ (and zeros off the leading diagonal). 
\item For $m\le n$, $\cM(m,n-m)$ is the group of $m\times (n-m)$ integer matrices under addition, and $Z_{i,j}\in \cM(m,n-m)$ is the matrix with every coefficient
equal to $0$, except the $(i,j)$ coefficient which is equal to $1$. 
\end{itemize}
Since the Whitehead automorphisms of Type 2 involved here are all transvections we use the Laurence-Servatius notation for generators in this case: 
that is, in 
the terminology of Section \ref{sec:genaut} we use $\tr_{x,y}$ rather than $(\{x,y\},y)$, to denote the transvection mapping $x$ to $xy$. 

\begin{theorem}\label{thm:stxva}
  Let $x\in X$ such that $\ad(x)=\cl(x)$. Assume  and  that
  $\ad(x)=\{x_i\,|\,1\le i\le r\}$ and $[x]=\{x_i\,|\,1\le i\le s\}$, where $s\le r$. Then 
\[\St_x^{\v}(\cK)= \St_{x,D}^\v\ltimes \St_{x,U}^\v\cong \GL(s,\ZZ)\ltimes_\theta \cM(s,r-s),\]
where, 
\be[label=(\roman*),ref=(\roman*)]
\item\label{it:stxva1} $\St_{x,U}^\v$ is free Abelian of rank $s(r-s)$, freely generated by the set of automorphisms 
$\{\tr_{x_i,x_j}\,|\,1\le i\le s, s+1\le j\le r\}$, and is isomorphic to $\cM(s,r-s)$ by an isomorphism taking $\tr_{x_i,x_j}$ to $Z_{i,j-s}$; 
\item\label{it:stxva2}  $\St_{x,D}^\v$ %\cong \GL(s,\ZZ)$, 
is generated by $\{\tr_{x_i,x_j}, \i_{x_i}\,|\, 1\le i,j\le s, i\neq j\}$ and the map $\tr_{x_i,x_j}\mapsto E^s_{i,j}$, $\i_{x_i}\mapsto O^s_i$,  
where $E^s_{i,j}$ and $O^s_i$ are the $s\times s$ matrices above,
extends to an isomorphism  $\St_{x,D}^\v$ to $\GL(s,\ZZ)$; and
\item\label{it:stxva3}  for $A\in \GL(s,\ZZ)$ and $B\in \cM(s,r-s)$, the automorphism $A\theta$ maps $B$ to $A^{-1}B\in  \cM(s,r-s)$.
\ee 
\end{theorem}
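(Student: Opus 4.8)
The groundwork in the discussion preceding the theorem reduces everything to matrix algebra: the composite $\rho_x\pi_x$ is an isomorphism of $\St_x^\v$ onto the matrix group $\cS_x^\v\le\GL(r,\ZZ)$, and by conditions \ref{it:vastx1}--\ref{it:vastx3} the elements of $\cS_x^\v$ are precisely the matrices of block form
\[
\begin{pmatrix} A_1 & B\\ 0 & I_{r-s}\end{pmatrix},\qquad A_1\in\GL(s,\ZZ),\ B\in\cM(s,r-s).
\]
The plan is therefore to describe $\cS_x^\v$ as an internal semidirect product of two visible matrix subgroups, compute the resulting conjugation action, transport the whole decomposition back through $(\rho_x\pi_x)^{-1}$, and finally identify the named generators.

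First I would set $\cS_D^\v$ to be the subgroup of matrices as above with $B=0$ and $\cS_U^\v$ the subgroup with $A_1=I_s$. Multiplying two matrices of the displayed form multiplies their top-left blocks and preserves the lower-right identity, so the map $A\mapsto A_D$ (zeroing the block $B$) is a retraction homomorphism of $\cS_x^\v$ onto $\cS_D^\v$ with kernel $\cS_U^\v$; hence $\cS_x^\v=\cS_D^\v\ltimes\cS_U^\v$ with $\cS_U^\v$ normal. Projection to the top-left block is an isomorphism $\cS_D^\v\maps\GL(s,\ZZ)$, and projection to the top-right block is an isomorphism $\cS_U^\v\maps(\cM(s,r-s),+)$, which is free Abelian of rank $s(r-s)$ with the $Z_{i,j}$ as free basis. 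A direct block computation then yields the conjugation action of $\cS_D^\v$ on the normal factor: for $A\in\GL(s,\ZZ)$,
\[
A_D^{-1}\begin{pmatrix} I_s & B\\ 0 & I_{r-s}\end{pmatrix}A_D=\begin{pmatrix} I_s & A^{-1}B\\ 0 & I_{r-s}\end{pmatrix},
\]
so, reading conjugation in the convention compatible with the right action of automorphisms, the induced action on $\cM(s,r-s)$ is $B\mapsto A^{-1}B$. This is the map $\theta$ of \ref{it:stxva3}.

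Next, defining $\St_{x,D}^\v$ and $\St_{x,U}^\v$ to be the $(\rho_x\pi_x)^{-1}$-preimages of $\cS_D^\v$ and $\cS_U^\v$, the decomposition transports to $\St_x^\v=\St_{x,D}^\v\ltimes\St_{x,U}^\v\cong\GL(s,\ZZ)\ltimes_\theta\cM(s,r-s)$, establishing the displayed isomorphism and \ref{it:stxva3}. It remains to recognise the generators. For $x_i\in[x]$ and $x_j\in\ad(x)$ with $x_j\ne x_i$, the transvection $\tr_{x_i,x_j}$ exists: since $x_j\in\ad(x)=\ad(x_i)=\cap_{y\in\lk(x_i)}\st(y)$ we have $\lk(x_i)\subseteq\st(x_j)$, the existence criterion of Section \ref{sec:genaut}; and it lies in $\St_x^\v$ because it fixes $X\bs\{x_i\}\supseteq X\bs[x]$ and, being an elementary transvection, lies in $\St(\cK)$. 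Under $\rho_x\pi_x$ it maps to the $r\times r$ elementary matrix $E^r_{i,j}$, which is unipotent (lying in $\cS_U^\v$) when $j>s$ and block-diagonal (lying in $\cS_D^\v$) when $j\le s$. Thus the transvections with $1\le i\le s<j\le r$ map to the free basis $\{Z_{i,j-s}\}$ of $\cM(s,r-s)$, giving \ref{it:stxva1}; while the $\tr_{x_i,x_j}$ and $\i_{x_i}$ with $1\le i,j\le s$ map to $\{E^s_{i,j},O^s_i\}$. Since the elementary matrices $E^s_{i,j}$ generate $\operatorname{SL}(s,\ZZ)$ and adjoining one sign matrix $O^s_i$ gives all of $\GL(s,\ZZ)$, these images generate $\cS_D^\v\cong\GL(s,\ZZ)$, giving \ref{it:stxva2}.

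The essential group theory---the semidirect decomposition and the classical generation of $\GL(s,\ZZ)$ by elementary and sign matrices---is routine, so the substance is careful bookkeeping. I expect the main obstacle to be keeping the conventions aligned: that automorphisms act on the right, making $\phi\mapsto[\phi]$ a covariant homomorphism, so that the conjugation computation produces $A^{-1}B$ rather than $AB$; and checking that each transvection and inversion used really is an automorphism lying in $\St_x^\v$, which comes down to the link--star containment above together with Lemma \ref{lem:ad0}.
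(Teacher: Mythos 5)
Your proposal is correct and follows essentially the same route as the paper's proof: both pass through the isomorphism $\rho_x\pi_x$ onto the block matrix group $\cS_x^\v$, split it into the block-diagonal and unipotent subgroups, compute the conjugation action $B\mapsto A^{-1}B$ by a direct block calculation, and then identify $\tr_{x_i,x_j}$ and $\i_{x_i}$ with the matrices $Z_{i,j-s}$, $E^s_{i,j}$ and $O^s_i$. The only differences are cosmetic (you exhibit the semidirect product via the retraction $A\mapsto A_D$ rather than the paper's explicit product identities, and you spell out the existence of the transvections via $\lk(x_i)\subseteq\st(x_j)$, which the paper leaves implicit).
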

\begin{proof}
We have established, using results of \cite{DKR3}, that $\St_x^\v\cong \cS_x^\v\le \GL(r,\ZZ)$ via the isomorphism $\rho_x\pi_x$, such that 
 $\phi\mapsto [\phi]$, for $\phi\in \St_x^\v$.
The subgroup $\cS_x^\v$ consists of matrices satisfying   \ref{it:vastx1}, \ref{it:vastx2} and \ref{it:vastx3} above, so if $A\in \cS_x^\v$ 
then, as above,  we may write
$A=A_DA_U$,  where
\[A_U= \begin{bmatrix}
    I_s     & A_U' \\
    0  & I_{r-s} 
\end{bmatrix},\, 
A_D=\begin{bmatrix}
    A_D'    & 0 \\
    0  & I_{r-s} 
\end{bmatrix},\]
with $A_U'\in \cM(s,r-s)$, $A_D'\in \GL(s,\ZZ)$, and $I_s$ and $I_{r-s}$ the identity matrices of the appropriate dimensions.

Define  subgroups $\cS_{x,D}^\v=\{A_D\,|\,A\in \cS_x^\v\}$ and 
$\cS_{x,U}^\v=\{A_U\,|\,A\in \cS_x^\v\}$. If 
$U,V\in \cS_{x,U}^\v$ and  $W\in \cS_{x,D}^\v$ with  
\[U= \begin{bmatrix}
    I_s     & U' \\
    0  & I_{r-s} 
\end{bmatrix},\, 
V=\begin{bmatrix}
    I_s     & V' \\
    0  & I_{r-s} 
\end{bmatrix},\, 
W=\begin{bmatrix}
    W'    & 0 \\
    0  & I_{r-s} 
\end{bmatrix},
\]
then 
\[UV= 
\begin{bmatrix}
    I_s     & U'+V' \\
    0  & I_{r-s} 
\end{bmatrix}
=VU\textrm{ and }W^{-1}UW= 
\begin{bmatrix}
    I_s     & W'^{-1}U' \\
    0  & I_{r-s} 
\end{bmatrix}
\in \cS_{x,U}^\v
\]
and we deduce that $\cS_{x,U}^\v$ is free Abelian and 
$\cS_{x}^\V= \cS_{x,D}^\v\ltimes \cS_{x,U}^\v$.
Also the map $\pi_U$, sending $U$ above to $U' \in \cM(s,r-s)$, and the map $\pi_D$, sending $W$ above to $W'$ in $\GL(s,\ZZ)$ are isomorphisms
from $\cS_{x,U}^\v$ to $\cM(s,r-s)$ and from  $\cS_{x,D}^\v$ to $\GL(s,\ZZ)$, respectively. 
 
Let $\Tr_{x,U}$ denote the set of transvections  $\{\t_{x_i,x_j}\,|\,1\le i\le s, s+1\le j\le r\}$ and define
 $\St^\v_{x,U}$ to be the subgroup of $\St^\v_x$ generated by $T_{x,U}$. Elements of $\St^\v_{x,U}$ fix the 
set $\{x_i\,|\, 1\le i\le s\}$ point-wise, and the map $\rho_x\pi_x\pi_U$ maps  $\t_{x_i,x_j}$ to $Z_{i,j-s}\in \cM(s,r-s)$, for
all $\tr_{x_i,x_j}\in \Tr_{x,U}$. It follows that $\rho_x\pi_x\pi_U$ maps $\St^\v_{x,U}$   isomorphically to   
 $\cM(s,r-s)$, and as the latter is freely generated by the $Z_{i,j}$, $1\le i\le s$, $1\le j\le r-s$, this proves \ref{it:stxva1}.
Similarly,  $\rho_x\pi_x\pi_D$
 sends $\t_{x_i,x_j}$ to $E^s_{i,j}$ and $\i_{x_i}$ to $O^s_{i}$, for $1\le i,j\le s$, and $i\neq j$; so determines an isomorphism from $\St_{x,D}^\v$ to 
$\GL(s,\ZZ)$. As the latter is generated by the matrices $E^s_{i,j}$ and $O^s_i$, \ref{it:stxva2} follows.
Finally, \ref{it:stxva3} follows from the identity for $W^{-1}UW$ above. 
\begin{comment}
gives  $M_x^\v= M_{x,D}^\v\times M_{x,U}$. Thus $\St_x^\v\cong M_{x,D}^\v\times M_{x,U}$, via the isomorphism $\phi\mapsto [\phi]$. 

The group $M_{x,U}$ is free Abelian, with
generators $\{E_{i,j}\,|\, 1\le i\le s, \, s+1\le j\le r\}$. ($E_{i,j}$ is defined here for square $r\times r$ matrices.) For  $1\le i\le s$ and 
$1\le j\le r$ the transvection $\tr_{i,j}$ maps to $[\tr_{i,j}]=E_{i,j}$ under this isomorphism, while for $1\le i\le s$ the inversion $\i_{x_i}$ maps 
to $[\i_{x_i}]=O_i$. Thus the inverse image of $M_{x,U}$ is free Abelian on $\{\tr_{x_i,x_j}\,|\,1\le i\le s, s+1\le j\le r\}$, giving \ref{it:stxva1}. 
Furthermore, the projection taking an element $A_D$ of $M_{x,D}$ onto its upper
 left $s\times s$ sub-matrix determines an isomorphism from $M_{x,D}$ to $\GL(s,\ZZ)$. 
As $\GL(s,\ZZ)$ is generated by the ($s\times s$) matrices $E_{i,j}$ and $O_i$, \ref{it:stxv3} follows.
\end{comment}
\end{proof}

\begin{example}\label{ex:admiss1}
Continuing Example \ref {ex:admiss};  we have $\ad(x)=\cl(x)$ for $x=d,e,f$ and $h$. 
\be
\item $\ad(f)=\cl(f)=\{d,e,f,g\}$ and $[f]=\{f,g\}$, so $r=4$ and $s=2$.

$\St_{f,U}^\v=\la \tr_{f,d}, \tr_{f,e},\tr_{g,d}, \tr_{g,e}\ra$ and is isomorphic to $\cM(2,2)$ via the map sending 
$\tr_{f,d}, \tr_{f,e},\tr_{g,d}$ and $\tr_{g,e}$ to $Z_{1,1}$,  $Z_{1,2}$, $Z_{2,1}$ and $Z_{2,2}$, respectively. 

$\St_{f,D}^\v=\la  \tr_{f,g}, \tr_{g,f}, \i_f, \i_g\ra$ and is isomorphic to $\GL(2,\ZZ)$ via the map sending 
$\tr_{f,g}, \tr_{g,f},\i_f$ and $\i_g$ to $E_{1,2}^2$, $E_{2,1}^2$, $O^2_1$ and $O^2_2$. 

Combining these two isomorphisms, $\St_f^\v$ is isomorphic to $\GL(2,\ZZ)\ltimes_\theta \cM(2,2)$, where, for $A\in \GL(2,\ZZ)$, the automorphism $A\theta$ of $\cM(2,2)$ is the 
map sending $B$ to $A^{-1}B$, for $B\in \cM(2,2)$. 
\item In the same way we see that $\St^\v_e= \la \i_e\,|\, \i_e^2\ra\ltimes\la \t_{e,d}\,|\,\ra\cong D_\infty$, the infinite dihedral group. 
\item Similar considerations show that $\St^\v_h=\la \i_h\,|\,\i_h^2\ra\cong \ZZ_2$ and 
 $\St^\v_d=\la \i_d\,|\,\i_d^2\ra\cong \ZZ_2$. 
\ee
\end{example}
\subsection{$\cl(x)\neq \ad(x)$}\label{sec:stxva}
In this case $\cl(x)$ is a proper subset of $\ad(x)$ and we define  $\ad_\out(x)=\ad(x)\bs \cl(x)$ and $\ad_\innt(x)=\cl(x)\bs [x]$. 
Then we have a disjoint union 
$\ad(x)=[x]\sqcup \ad_\innt(x)\sqcup \ad_\out(x)$, with $\ad_\innt(x)\subseteq \lk(x)$ and $\ad_\out(x)\cap \st(x)=\nul$. 
Let $\ad(x)=\{x_i\,|\, 1\le i\le r\}$, where for some $p\le q<r$ we have  $[x]=\{x_i\,|\, 1\le i\le p\}$ and $\ad_\innt(x)=\{x_i\,|\,p+1\le i\le q\}$.
Then 
$\GG(\ad(x))=\GG(\ad_\innt(x))\times [\GG([x])\ast \GG(\ad_\out(x))]$, where  $\GG([x])$ is free of rank $p$,  
$\GG(\ad_\innt(x))$ is isomorphic to  $\ZZ^{q-p}$ and $\GG(\ad_\out(x))$ is a partially commutative group on a graph of $r-q$ vertices. 
\begin{theorem}\label{thm:stxdecomp}
$\St_x^\v= \St_{x,l}^\v \ltimes \St_{x,s}^\v$ where 
\[\St_{x,l}^\v=\{\phi\in \St_x^\v\,|\, y\phi \in \GG([x]\cup \ad_\out(x)), \forall y\in [x]\}\] and
\[\St_{x,s}^\v=\{\phi\in \St_x^\v\,|\, \forall y\in [x], \exists w_y\in \GG(\ad_\innt(x))\textrm{ such that } y\phi=yw_y\}.\]
Moreover,  with the above notation, 
$\St_{x,s}^\v$ is a free Abelian group of rank $p(q-p)$, freely generated by $\{\t_{x_i,x_j}\,|\, 1\le i\le p, p+1\le j\le q\}$,
 and is isomorphic to $\cM(p,q-p)$ by an isomorphism taking $\tr_{x_i,x_j}$ to $Z_{i,j-p}$ (\textit{cf.} Theorem \ref{thm:stxva}). 
\end{theorem}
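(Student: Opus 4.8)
The plan is to realise $\St_{x,s}^\v$ as the kernel of a natural homomorphism, which makes its normality automatic, and then to split off a complement using the central direct factor. Write $Z=\GG(\ad_\innt(x))$ and $H=\GG([x]\cup\ad_\out(x))$, so that, by the decomposition displayed just before the theorem, $\GG(\ad(x))=Z\times H$ with $Z\cong\ZZ^{q-p}$ central in $\GG(\ad(x))$ and $H=\GG([x])\ast\GG(\ad_\out(x))$. Every $\phi\in\St_x^\v$ preserves $\GG(\ad(x))$ (as $\ad(x)\in\cK$) and fixes $\ad_\innt(x)$ and $\ad_\out(x)$ pointwise (as these lie in $X\bs[x]$); in particular $\phi$ fixes $Z$ pointwise, so it preserves $Z$ and descends to an automorphism $\bar\phi$ of the quotient $\GG(\ad(x))/Z\cong H$. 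For $x_i\in[x]$ I will use the unique factorisation $x_i\phi=z_ig_i$ with $z_i\in Z$ and $g_i\in H$.

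First I would check that $\phi\mapsto\bar\phi$ is a homomorphism $\St_x^\v\to\Aut(H)$ with kernel exactly $\St_{x,s}^\v$: one has $\bar\phi=\id_H$ precisely when $\overline{x_i\phi}=\bar x_i$ for all $i$, i.e. when $x_i\phi=x_iz_i$ with $z_i\in Z$, which is the defining condition of $\St_{x,s}^\v$. Hence $\St_{x,s}^\v\trianglelefteq\St_x^\v$. Dually, for $\phi\in\St_{x,l}^\v$ one has $x_i\phi=g_i\in H$, so $\phi$ carries $H$ into itself and acts as $\id_Z\times(\phi|_H)$ on $Z\times H$; since $\phi$ is an automorphism, $\phi|_H\in\Aut(H)$, which shows $\St_{x,l}^\v$ is a subgroup (closed under inverses) on which $\phi\mapsto\bar\phi$ is injective. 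Consequently $\St_{x,l}^\v\cap\St_{x,s}^\v=1$, since an element of the intersection satisfies $x_i\phi=x_iz_i\in H$, forcing $z_i\in Z\cap H=1$.

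The factorisation I would obtain directly from the central part. Given $\phi\in\St_x^\v$ as above, write $z_i=\prod_{j=p+1}^q x_j^{\,n_{ij}}$ and set $\phi_s=\prod_{i=1}^p\prod_{j=p+1}^q\tr_{x_i,x_j}^{\,n_{ij}}$. Each $\tr_{x_i,x_j}$ with $1\le i\le p$, $p+1\le j\le q$ is a genuine elementary transvection: since $x_i\in[x]=[x]^{\lk}$ gives $\lk(x_i)=\lk(x)$, while $x_j\in\cl(x)=\st(\st(x))$ satisfies $u\in\st(x_j)$ for every $u\in\st(x)$ and hence for every $u\in\lk(x)$, the condition $\lk(x_i)\subseteq\st(x_j)$ holds. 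These transvections lie in $\St(\cK)$, move only $[x]$, and send $x_i\mapsto x_iz_i$ with $z_i\in\GG(\ad_\innt(x))$, so $\phi_s\in\St_{x,s}^\v$. Because $z_i$ is central and fixed by $\phi$, a short computation gives $x_i(\phi_s^{-1}\phi)=(x_iz_i^{-1})\phi=z_ig_iz_i^{-1}=g_i\in H$, so $\phi_l:=\phi_s^{-1}\phi\in\St_{x,l}^\v$ and $\phi=\phi_s\phi_l$. Together with the normality of $\St_{x,s}^\v$ and the trivial intersection, this yields the internal semidirect product $\St_x^\v=\St_{x,l}^\v\ltimes\St_{x,s}^\v$.

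For the final assertion I would analyse $\St_{x,s}^\v$ through the assignment $\phi\mapsto(z_1,\dots,z_p)\in Z^p$. Since $Z$ is central and each such $\phi$ fixes $Z$ pointwise, composing two elements adds the corresponding tuples, so this is a homomorphism to $Z^p\cong\ZZ^{p(q-p)}$; it is injective because a trivial tuple forces $\phi=\id$, and surjective because the products of transvections above realise every tuple. Identifying $Z^p$ with $\cM(p,q-p)$ by recording the exponents $n_{ij}$, the transvection $\tr_{x_i,x_j}$ maps to the single matrix unit $Z_{i,j-p}$, so $\{\tr_{x_i,x_j}\mid 1\le i\le p,\ p+1\le j\le q\}$ is a free generating set and $\St_{x,s}^\v\cong\cM(p,q-p)$ is free Abelian of rank $p(q-p)$. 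The one point needing genuine care—and the main obstacle—is confirming that the set-theoretic description of $\St_{x,s}^\v$ as "multiply each $x_i$ by its central part" is realised by honest automorphisms of $\GG$ lying in $\St(\cK)$: this is what the explicit transvection factorisation secures, and it simultaneously delivers the surjectivity of $\phi\mapsto(z_i)$ and the existence of the splitting. Once the passage to the quotient $H$ is justified and these transvections are verified, the normality, the complement, and the Abelian structure all follow formally.
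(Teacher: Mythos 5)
Your proposal is correct, and its engine is the same as the paper's: both proofs peel off the central part of $x_i\phi$ by an explicit product $\phi_s=\prod_{i,j}\tr_{x_i,x_j}^{n_{ij}}$ of transvections into $\ad_\innt(x)$, and check that $\phi_s^{-1}\phi$ lands in $\St_{x,l}^\v$ because $\GG(\ad_\innt(x))$ is central in $\GG(\ad(x))$ and fixed pointwise by everything in $\St_x^\v$. Where you genuinely differ is in how the remaining structure is established. The paper proves normality of $\St_{x,s}^\v$ by a bare-hands computation: it conjugates a generator $\tr_{x_k,x_j}$ by $\phi\in\St_{x,l}^\v$ and shows the result is $\prod_i\tr_{x_i,x_j}^{s(i,k)}$, where $s(i,k)$ is the exponent sum of $x_k$ in $x_i\phi^{-1}$; you instead realise $\St_{x,s}^\v$ as the kernel of the homomorphism $\St_x^\v\to\Aut(H)$ induced on $H\cong\GG(\ad(x))/\GG(\ad_\innt(x))$, which delivers normality and the trivial intersection with $\St_{x,l}^\v$ in one stroke. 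Your route is tidier; the paper's computation has the side benefit of exhibiting the conjugation action of $\St_{x,l}^\v$ on $\St_{x,s}^\v\cong\cM(p,q-p)$ explicitly, which is the kind of information needed when assembling presentations later. You also supply two details the paper elides: the verification that the $\tr_{x_i,x_j}$ exist as automorphisms of $\GG$ (via $\lk(x_i)=\lk(x)\subseteq\st(x)\subseteq\st(x_j)$, using $[x]=[x]^{\lk}$ in this case and $x_j\in\cl(x)=\st(\st(x))$), and the explicit isomorphism $\St_{x,s}^\v\to Z^p\cong\cM(p,q-p)$, $\phi\mapsto(z_1,\ldots,z_p)$, where the paper simply appeals to ``as in the previous subsection''. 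One small point you should make explicit: when you assert $\phi|_H\in\Aut(H)$ for $\phi\in\St_{x,l}^\v$, injectivity alone does not suffice (such groups need not be co-Hopfian); surjectivity follows because $\phi$ restricted to $\GG(\ad(x))=Z\times H$ is onto and acts as the identity on the factor $Z$, so unique factorisation forces the $H$-component of any preimage of $h\in H$ to map onto $h$.
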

\begin{proof}
  Let $\phi\in \St_x^\v$, so $y\phi=y$ unless $y\in \{x_1,\ldots ,x_p\}$.
  For $1\le i\le p$, there exists $w_i\in \GG([x]\cup \ad_\out(x))$ such that  
\[x_i\phi=w_ix_{p+1}^{a_{i,p+1}}\cdots x_{q}^{a_{i,q}},\] as $\GG(\ad_\innt(x))$ is the centre of $\GG(\ad(x))$. 
Let \[\phi_1=\prod_{i=1}^p\tr_{x_i,x_{p+1}}^{a_{i,p+1}}\cdots\tr_{x_i,x_{q}}^{a_{i,q}},\] 
so $x_i\phi_1=x_ix_{p+1}^{a_{i,p+1}}\cdots x_{q}^{a_{i,q}}$, for $i=1,\ldots ,p$. 
Then, as $\phi_1\in \St_x^\v$, so is $\phi_0=\phi_1^{-1}\phi$ and $x_i\phi_0=w_i\in \GG([x]\cup \ad_\out(x))$, for $x_i\in [x]$. Therefore
$\phi=\phi_1\phi_0$ with $\phi_0\in \St_{x,l}^\v$ and  $\phi_1\in \St_{x,s}^\v$. Moreover, as in the previous subsection, $\St_{x,s}^\v$ is 
a free Abelian group generated by $\{\tr_{x_i,x_j}\,|\, 1\le i\le p, \, p+1\le j\le q\}$, isomorphic to $\cM(p,q-p)$ via the map sending
$\t_{x_i,x_j}$ to $Z_{i,j-p}$. 
 From the definitions $\St_{x,s}^\v\cap \St_{x,l}^\v=\{1\}$. 
To see that  $\St_{x,s}^\v$ is normal in $\St_x^\v$, let  $\tr_{x_k,x_j}$ be a generator of $\St_{x,s}^\v$ and $\phi\in \St_{x,l}^\v$. Then, for $x_i\in [x]$ 
there exists $w_i\in \GG([x]\cup \ad_{\out}(x))$ such that $x_i\phi^{-1}=w_i$. For each $i$, let $s(i,k)$ be the 
exponent sum of $x_k$ in $w_i$. As $x_k\in [x]$ and $x_j\in \ad_s(x)$, we have  $x_i\phi^{-1}\tr_{x_k,x_j}\phi=w_i\tr_{x_k,x_j}\phi=w_ix_j^{s(i,k)}\phi=
w_i\phi x_j^{s(i,k)}=x_ix_j^{s(i,k)}$. Therefore $\phi^{-1}\tr_{x_k,x_j}\phi\in \St_{x,s}^\v$; from which it follows  that $\St_{x,s}^\v$ is normal in $\St_x^\v$.  
\end{proof}
\begin{example}\label{ex:admiss2}
Continuing Example \ref {ex:admiss}; $\ad(x)\neq \cl(x)$ when $x=a, b, c$ or $i$. 
We have $[a]=\{a,b\}$ and  $\ad_\innt(a)=\{d\}$ so $\St^\v_{a,s}=\la \tr_{a,d},\tr_{b,d}\ra$ is free Abelian of rank $2$. 
As $[c]=\{c\}$ and $\ad_\innt(c)=\{d\}$, we have $\St^\v_{c,s}=\la \tr_{c,d}\ra$ and similarly $\St^v_{i,s}=\la \tr_{i,h}\ra$, both infinite cyclic.  
\end{example}
This lemma allows us to reduce determination of the structure of $\St_{x}^\v$ to that of $\St_{x,l}^\v$. For this purpose it is  convenient
to consider the set of Whitehead automorphisms $\W$ as in Definition \ref{def:wh}, as the generating set for $\Aut(\GG)$.  
 Our candidate generating set for $\St_{x,l}^\v$ is given in the next definition. 
\begin{definition}\label{defn:stx1gen}
Given $x\in X$, let $\W_x=\W\cap \St_{x,l}^\v$. 
\end{definition}
From the definitions we have
\begin{align*}
\W_x&=\{\s\in \Aut(\G^{\pm})\,|\, y\s=y,\forall y \in X\bs [x]\}\\
&\cup \{(A,a)\in \W\,|\,A\bs\{a\}\subseteq [x]_L, a\in[x]_L\cup \ad_{\out,L}(x)\}.
\end{align*}
% where $[x]_L=[x]^{\pm 1}$ and $\ad_{\out,L}(x)=\ad_\out(x)^{\pm 1}$.
As the full graph on $[x]$ is a null graph in the current case,  the set of Type 1 automorphisms in $\W_x$ is  
$\{\s\in \Aut(\G^{\pm})\,|\, y\s=y,\forall y \in X\bs [x]\}$, which is the set of permutations $\s$ of $[x]_L$ such that $x^{-1}\s=(x\s)^{-1}$.  

As a candidate set of relations $R_x$  for $\St_{x,l}^\v$ we take those relations of the presentation
for $\Aut(\GG)$ in \cite{Day09} which apply to words in the free group on $\W_x$; augmented by relations required to make
peak reduction arguments possible within the set $\W_x$ (namely \ref{it:DR3*} and \ref{it:DR4*}). More precisely we define the set $R_x$ to consist 
of all relations defined by \ref{it:DR1}--\ref{it:DR7}, \ref{it:DR3*} and \ref{it:DR4*} below. In these relations, $A+B$ denotes $A\cup B$, when $A\cap B=\nul$, 
and $B-A$ denotes $A\backslash B$, when $A\subseteq B$. By $A-a$ and $A+a$ we mean $A- \{a\}$ and $A+ \{a\}$ respectively. 
\be[label=R\arabic*$_x$,ref=R\arabic*$_x$]
\item \label{it:DR1}%[(R1)] 
$(A,a)^{-1}=(A-a+a^{-1},a^{-1})$, for $(A,a)\in \W_x$. 
\item \label{it:DR2}%[(R2)] 
$(A,a)(B,a)=(A\cup B, a)$, for $(A,a), (B,a)\in \W_x$, such that $A\cap B=\{a\}$.
\item \label{it:DR3}%[(R3)] 
$(B,b)^{-1}(A,a)(B,b)=(A, a)$, for $(A,a), (B,b)\in \W_x$, such that $a^{-1}\notin B$, $b^{-1} \notin A$ 
and either 
\be[label=(\alph*),ref=(\alph*)]
\item \label{it:DR3a}
$A\cap B=\nul$ or 
\item \label{it:DR3b} $a\in \lk_L(b)$.
\ee
\ee
\be[label=R\arabic*$_x^*$,ref=R\arabic*$_x^*$,start=3]
\item \label{it:DR3*}%[(R3*)] 
$(B,b)^{-1}(A,a)(B,b)=(A, a)$, for $(A,a), (B,b)\in \W_x$, such that $a^{-1}\in B$, $b \notin A$ 
and 
$A\subseteq B$. 
\ee
\be[label=R\arabic*$_x$,ref=R\arabic*$_x$,start=4]
\item \label{it:DR4}%[(R4)] 
$(B,b)^{-1}(A,a)(B,b)=(A+B-b,a)$, %formerly (B-b+a,a)(A,a)
for $(A,a)$, $(B,b)\in \W_x$, such that $a^{-1}\notin B$, $b^{-1}\in  A$, 
and 
$A\cap B=\nul$. 
\ee
\be[label=R\arabic*$_x^*$,ref=R\arabic*$_x^*$,start=4]
\item \label{it:DR4*}%[(R3)] 
$(B,b)^{-1}(A,a)(B,b)=(B-A+b^{-1},a^{-1})$, for $(A,a)$, $(B,b)\in \W_x$, such that $a^{-1}\in B$, $b\in A$, 
and 
$A\subseteq B$.
\ee
\be[label=R\arabic*$_x$,ref=R\arabic*$_x$,start=5]
\item \label{it:DR5}%[(R5)] 
$(A,a)(A-a+a^{-1},b)=\s_{a,b}(A-b+b^{-1},a)$, for $(A,a)\in \W_x$, $a,b\in [x]_L$, $a\neq b$, $b\in A$, $b^{-1}\notin A$  and $\s_{a,b}$ the
Type 1 Whitehead automorphism permuting $[x]_L$ by the cycle $(a,b^{-1},a^{-1},b)$.  
\item \label{it:DR6}%[(R6)] 
$\s^{-1}(A,a)\s=(A\s,a\s)$, where $(A,a)$ is in $\W_x$, of Type 2, and $\s \in \W_x$ of Type 1. 
\item \label{it:DR7}%[(R7)] 
The multiplication table of the subgroup  of Type 1 automorphisms in $\W_x$. 
\ee
We denote by R$_x$ the set of  of relations given by \ref{it:DR1}--\ref{it:DR7}, \ref{it:DR3*} and \ref{it:DR4*}.
In the remainder of this section we shall prove the following theorem.
\begin{theorem}\label{thm:stxvf}
%\be
%\item
  $\St_{x,l}^\v$ has a presentation $\la \W_x|R_x\ra$.
%\item \ajd{this part speculative}
%The subset of generators $\Tr_{x,\out}=\{\tr_{y,a}\in \W_x\,|\,y\in [x], a\in \ad_\out(x)\}$ generate a partially commutative subgroup with presentation 
%$\la \Tr_{x,\out}\,|\,[\tr_{y,a},\tr_{z,b}]=1, \forall x,z\in [x], a,b\in \ad_\out(x),\textrm{ such that } [a,b]=1\ra$. 
%\item \ajd{Not sure of this}
%The subset of generators $\W_{[x]}=\{(A,a)\in \W_x|a\in [x]_L\}\cup \{$Type 1 generators in $\W_x\}$ generates a subgroup
%isomorphic to $\Aut(\FF([x])$, with presentation $\la \W_{[x]}\,|\,R_{[x]}\ra$, where $R_{[x]}$ is the set of  relations  R1$_x$--R7$_x$,
%excluding \ref{it:DR3}\ref{it:DR3b}, and applied only to elements of $\W_{[x]}$.
%\ee
\end{theorem}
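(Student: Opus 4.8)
The plan is to prove Theorem \ref{thm:stxvf} by the peak-reduction method of McCool, adapting the peak-reduction machinery Day developed for $\Aut(\GG)$ so that every intermediate step stays inside the generating set $\W_x$. Let $F=F(\W_x)$ be the free group on $\W_x$ and let $\pi\colon F\maps \St_{x,l}^\v$ be the homomorphism induced by the inclusion $\W_x\hookrightarrow \St_{x,l}^\v$. Each relator in $R_x$ is an identity of automorphisms (the relations \ref{it:DR3*} and \ref{it:DR4*}, being new, must be checked directly to hold in $\Aut(\GG)$; the rest are instances of DR1--DR7), so $\pi$ kills every relator and factors through $\la \W_x\mid R_x\ra$. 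It therefore suffices to prove that the induced map $\la \W_x\mid R_x\ra\maps \St_{x,l}^\v$ is both surjective and injective. The complexity I attach to $\phi\in \St_{x,l}^\v$ is the total length $\|\phi\|=\sum_{i=1}^p |x_i\phi|$ of the images of the free generators $x_1,\dots,x_p$ of $\GG([x])$, measured in $\GG([x]\cup\ad_\out(x))=\GG([x])\ast\GG(\ad_\out(x))$; this is the natural quantity since $\phi$ fixes $X\bs[x]$ and carries $[x]$ into that free product.

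First I would establish surjectivity, namely that $\W_x$ generates $\St_{x,l}^\v$. Given $\phi\in \St_{x,l}^\v$, Day's (long-range) peak-reduction theorem writes $\phi$ as a product of Whitehead automorphisms with no peak relative to the tuple $(x_1,\dots,x_p)$. The point is that a peak-free factorisation of an element of $\St_{x,l}^\v$ can be taken with all factors in $\W_x$: in such a factorisation the intermediate automorphisms must already fix $X\bs[x]$ and preserve $\GG([x]\cup\ad_\out(x))$, since a factor moving a generator out of $[x]$, or sending some $x_i$ partly into $\GG(\ad_\innt(x))$, could be absorbed or cancelled using the short/long-range factorisation of Remark \ref{rem:lsfac} together with the splitting $\St_x^\v=\St_{x,l}^\v\ltimes\St_{x,s}^\v$ of Theorem \ref{thm:stxdecomp}, contradicting peak-freeness. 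This gives a factorisation of $\phi$ entirely within $\W_x$.

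For injectivity I would run the McCool argument in reverse: take a word $w$ in $\W_x$ with $w\pi=1$ and show $w$ is a consequence of $R_x$. Since Day proved $\Aut(\GG)=\la\W_s\cup\W_l\mid \text{DR1--DR7}\ra$, the word $w$, read in the larger free group, is a product of conjugates of the Day relators. The crux is to rewrite each Day relator, and each intermediate word arising in its derivation, as a word in $\W_x$ modulo $R_x$. Whenever a Day relation of type DR3 or DR4 is applied to a pair $(A,a),(B,b)\in\W_x$ it is required that $a^{-1}\notin B$; when instead the reduction forces $a^{-1}\in B$ (so neither DR3 nor DR4 applies, and the naive output $(A+B-b,a)$ or $(A,a)$ need not meet the support constraints defining $\W_x$) I would invoke the substitute identities \ref{it:DR3*} and \ref{it:DR4*}, which under the stated hypothesis $A\subseteq B$ produce an output already in $\W_x$ and are exactly the relations adjoined to $R_x$ for this purpose. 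Descent on the complexity $\|\cdot\|$ guarantees the rewriting terminates.

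The main obstacle is precisely this closure problem: ensuring that neither the peak-lowering moves of the generation step nor the relator-rewriting of the injectivity step ever leaves $\W_x$. Day's DR3 and DR4 are stated for all Whitehead automorphisms, and their naive restriction to $\W_x$ is not closed under the conjugations needed to cancel a peak. The substantial work lies in verifying that \ref{it:DR3*} and \ref{it:DR4*} restore this closure --- that they are valid in $\Aut(\GG)$, that their outputs lie in $\W_x$ under the hypotheses $a^{-1}\in B$ and $A\subseteq B$, and that together with DR1--DR7 they realise every peak-lowering move of Day's algorithm inside $F(\W_x)$. This case analysis must be carried out separately for Type 1 and Type 2 factors and, for the latter, according to whether the multiplier $a$ lies in $[x]_L$ or in $\ad_{\out,L}(x)$.
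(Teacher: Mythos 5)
Your proposal correctly isolates the closure problem and the purpose of \ref{it:DR3*} and \ref{it:DR4*}, but the injectivity step rests on a scheme that does not work. You propose to take a word $w\in \FF(\W_x)$ with trivial image, expand it in the free group on $\W_s\cup\W_l$ as a product of conjugates of Day's relators DR1--DR7, and then ``rewrite each Day relator, and each intermediate word arising in its derivation, as a word in $\W_x$ modulo $R_x$.'' The conjugating words and the partial products in such a derivation involve generators outside $\W_x$ and in general do not represent elements of $\St^\v_{x,l}$ at all, so there is nothing in $\FF(\W_x)$ to rewrite them to; this is exactly the obstruction that makes presentations of subgroups hard, and no descent on your complexity $\|\phi\|=\sum_i|x_i\phi|$ addresses it (note also that this complexity is built from lengths of \emph{elements}, whereas Day's peak-reduction machinery, and hence \ref{it:DR3*}/\ref{it:DR4*} as peak-lowering moves, is formulated for tuples of \emph{conjugacy classes}). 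The paper never touches relator derivations. Instead it proves (Lemma \ref{lem:Daypl}, via the new Case $3^*$ with $A\subseteq B$ and Lemma \ref{lem:321*}, precisely to avoid Day's substitution $\a\mapsto(L-A-\lk_L(a),a^{-1})$ which leaves $\W_x$) that any peak of a word in $\W_x$ can be lowered inside $\W_x$ by moves which are \emph{consequences of $R_x$}; it then applies these moves to the word $w$ itself, relative to the specially chosen tuple $C_2$ of representatives of all length-two conjugacy classes of $\GG([x])$, until $w$ is peak reduced \emph{in the group $P=\la\W_x|R_x\ra$}. Since $w=1$ in $\St^\v_{x,l}$, Lemma \ref{lem:nored} forces every factor of the peak-reduced word to be of Type $1_x$, $2a_x$ or $2b_x$; Lemma \ref{lem:facta} sorts these factors using \ref{it:DR6}, \ref{it:DR7} and \ref{it:DR3*}; and Lemma \ref{lem:aone} shows, using freeness of $\GG([x])$ and centraliser arguments, that such a factorisation of the identity collapses to the empty word in $P$. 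This terminal analysis of level factorisations of the identity is the heart of McCool's method, and it is entirely absent from your proposal, as is any choice of tuple that would make it possible.

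The surjectivity step also has a gap. Peak reduction of $\phi\in\St^\v_{x,l}$ relative to $(x_1,\ldots,x_p)$ does not force the intermediate factors to fix $X\bs[x]$ or to avoid $\GG(\ad_\innt(x))$; the claim that an offending factor ``could be absorbed or cancelled \ldots contradicting peak-freeness'' is an assertion, not an argument. In the paper this is Theorem \ref{thm:stgen}, whose proof (Proposition \ref{prop:stabtuple}) requires genuine work: one peak reduces relative to an auxiliary tuple $Z=W^{k(k+1)}V$ built from the \emph{fixed} letters $W=X\bs[x]$ (not the moved generators), and then performs a sequence of rearrangements pushing Type 1 factors and inner automorphisms to the ends of the factorisation in order to upgrade ``stabilises $W$ up to conjugacy'' to ``stabilises $W$''. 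Since Theorem \ref{thm:stgen} precedes Theorem \ref{thm:stxvf} in the paper, the surjectivity of $P\maps\St^\v_{x,l}$ can simply be quoted; but as a self-contained argument your sketch does not establish it.
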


We shall use the peak reduction theorem of \cite{Day09}, and its analogue for $\St_{x,l}^\v$, to prove this theorem, 
and introduce the necessary 
terminology in the next sub-section. First we prove Corollary \ref{cor:stgens}
and given an example. 
%\begin{corol}\label{cor:stgens}
%$\St(\cK)=\la \Inv,\Tr\ra$.
%\end{corol} 
\begin{proof}[Proof of Corollary \ref{cor:stgens}.]
  It must be shown that $\St(\cK)=\la \Inv,\Tr\ra$; and that there is a
  finite presentation with 
  these generators there is a 
Every inversion and elementary transvection belongs to $\St(\cK)$, by the fundamental results of Laurence and Servatius. 
 On the other hand, it follows from Theorems \ref{thm:st2decomp},  \ref{thm:stxva},  \ref{thm:stxdecomp} and   \ref{thm:stxvf}
 that $\St(\cK)$ is generated by inversions and transvections. Moreover,
 from the constructions appearing in these theorems a finite presentation
 may be built. 
\end{proof}
\begin{example}\label{ex:admiss3}
  Continuing from Example \ref{ex:admiss2} we find presentations for $\St^\v_{x,l}$, for $x=a,c,i$. 
\be
\item We have $[i]=\{i\}$, $\ad_s(i)=\{h\}$ and $\ad_l(i)=\{c,d,e\}$. 
Then 
\[\W_i=\{\i_i, (\{i^\e,s\},s), (\{i,i^{-1},s\},s)\,:\, \e\in \{\pm 1\}, s\in \{c,d,e\}^{\pm 1}\}.\]
The set $R_i$ consists of relations of types $R1_i$, $R2_i$, $R3_i$, $R6_i$ and $R7_i$. There are no
relations of types $R3^*_i$, $R4_i$, $R4^*_i$ or $R5_i$. Relations $R1_i$ and $R2_i$ allow Tietze transformations
to be applied to remove generators $(\{i^\e,s^{-1}\},s^{-1})$, where $s\in \{c,d,e\}$, and generators $(\{i,i^{-1},s\},s)$, where 
$s\in \{c,d,e\}^{\pm 1}$. This leaves a presentation with generating set  
\[\W'_i=\{\i_i, (\{i^\e,s\},s)\,:\, \e\in \{\pm 1\}, s\in \{c,d,e\}\},\]
and relations
\be[label=R\arabic*$_i$.,ref=R\arabic*$_i$]
\item[R$2_i+$R$3_i$(a).] $(\{i,s\},s)(\{i^{-1},s'\},s')=(\{i^{-1},s'\},s')(\{i,s\},s)$,  $s,s'\in \{c,d,e\}$.
\item [R$3_i$(b).] $(\{i^\e,d\},d)(\{i^{\e},s\},s)=(\{i^{\e},s\},s)(\{i^{\e},d\},d)$,  $s\in \{c,e\}$, $\e\in \{\pm 1\}$.
\addtocounter{enumii}{5}
\item $\i_i(\{i,s\},s)\i_i=(\{i^{-1},s\},s)$, $s\in \{c,d,e\}$.
\item $\i_i^2=1$. 
\ee

For each $\e=1$ and $-1$ we have a subgroup of $\St^\v_{i,l}$ generated by $\{(\{i^\e,s\},s):s\in  \{c,d,e\}\}$ which is 
isomorphic to $\FF_2\times \ZZ$ (the central $\ZZ$ generated by $(\{i^\e,d),d)$). The conjugation action of $\i_i$ on this subgroup maps
$(\{i^\e,s\},s)$ to $(\{i^{-\e},s\},s)$, for all $s$, so \[\St^\v_{i,l}\cong (\FF_2\times \ZZ)^2\rtimes_{\phi_i} \ZZ_2,\] where $\phi_i$ maps $1\in \ZZ_2$ to the 
automorphism of $(\FF_2\times \ZZ)^2$ taking $(a,b)$ to $(b,a)$, for $a,b\in \FF_2\times \ZZ$. 
\item  We have $[c]=\{c\}$, $\ad_s(c)=\{d\}$ and $\ad_l(c)=\{e\}$. As before, applying Tietze transformations to the presentation obtained 
from Theorem \ref{thm:stxvf} gives a presentation with generating set 
\[\W'_c=\{\i_c, (\{c^\e,e\},e)\,:\,\e=\pm 1\},\]
and relations
\be
\item[R$2_c$.] $(\{c^{-1},e\},e)(\{c,e\},e)=(\{c,e\},e)(\{c^{-1},e\},e)$.
\item[R$6_c$.] $\i_c(\{c,e\},e)\i_c=(\{c^{-1},e\},e)$.
\item[R$7_c$.] $\i_c^2=1$. 
\ee
Hence $\St^\v_{c,l}\cong \ZZ^2\rtimes_{\phi_c} \ZZ_2$, where $\phi_c$ maps $1\in \ZZ_2$ to the 
automorphism of $(\ZZ)^2$ taking $(a,b)$ to $(b,a)$, for $a,b\in \ZZ$. 
\item
We have $[a]=\{a,b\}$, $\ad_s(a)=\{d\}$ and $\ad_l(a)=\{h\}$. Let $\Pi_a$ denote the set of permutations of $\{a^{\pm 1}, b^{\pm 1}\}$ inducing 
automorphisms of $\GG(a,b)$.  We use the more concise notation $\t_{x,y}$ for the automorphism $(\{x,y\},y)$ here. After applying Tietze transformations
as in the previous cases we  obtain a presentation for $\St^\v_{a,l}$ with generating set 
\[
\W'_a= \Pi_a\cup \{\t_{s,t}\,:\, s\in \{a,b\}^{\pm 1}, t\in \{a,b,h\}, s\neq t^{\pm 1}\},
\]
and relations $R'_a$ as follows. 
\be
\item[R$2_a$.] \[\t_{s,t}\t_{s^{-1},t}=\t_{s^{-1},t}\t_{s,t},\] where $s\in \{a,b\}$, $t\in \{a,b,h\}$, $s\neq t^{\pm 1}$.%,\\
%$\t_{s,h}\t_{s^{-1},h}\t_{t,h}=\t_{t,h}\t_{s,h}\t_{s^{-1},h}
\item[R$3_a^*$.] \[\t_{s,t}\t_{s,h}\t_{t,h}\t_{t^{-1},h}= \t_{s,h}\t_{t,h}\t_{t^{-1},h}\t_{s,t},\] where  $s\in \{a,b\}^{\pm 1}$, $t\in \{a,b\}$, 
$s\neq t^{\pm 1}$.
\item[R$4_a^*$.] \[\t_{s,t}\t_{t,s}\t_{t^{-1},s}=\t_{t,s}\t_{t^{-1},s}\t_{s^{-1},t}^{-1},\] where  $s,t\in \{a,b\}$, $s\neq t$.
\item[R$5_a$.] 
\[
\t_{s^\e,t}\t_{t^{-1},s}^\e=\s_{s^\e,t}\t_{a^{-\e},b},
\]
where $\e\in \{\pm 1\}$, $s,t\in \{a,b\}$, $s\neq t^{\pm  1}$, and $\s_{s^\e,t}$ is the permutation with cycle $(s^\e,t,s^{-\e},t^{-1})$.
\item[R$6_a$.] For all $\s \in \Pi_a$, and all $\t_{s,t}$,  
\[\s^{-1}\t_{s,t}\s=\t_{s\s,t\s}.\]
\item[R$7_a$.] A set of defining relations for $\la \Pi_a\ra$.
\ee
From Examples \ref{ex:admiss0}, \ref{ex:admiss1} and \ref{ex:admiss2}, 
\begin{align*}
\St(\cK)&\cong\left[\ZZ\rtimes \St_{i,l}^\v \right]\\
&\rtimes\left\{ \left[ \left(\ZZ\rtimes \St^\v_{c,l}\right)\times \left(\cM(2,2)\rtimes \GL(2,\ZZ) \right)\right]\right.\\
&\rtimes\left\{
\left[\left(\ZZ^2\rtimes\St_{a,l}^\v \right) \times D_\infty  \right]\right.\\
&\left. \left.\rtimes\left[ \ZZ_2\times \ZZ_2\right]\right\}\right\}
\end{align*}
and combining with the current  example we have
\begin{align*}
\St(\cK)&\cong\left[\ZZ\rtimes \left(\left(\FF_2\times \ZZ\right)^2\rtimes \ZZ_2\right) \right]\\
&\rtimes\left\{ \left[ \left(\ZZ\rtimes \left(\ZZ^2\rtimes \ZZ_2 \right)\right)\times \left(\cM(2,2)\rtimes \GL(2,\ZZ) \right)\right]\right.\\
&\rtimes\left\{
\left[\left(\ZZ^2\rtimes \la \W'_a\,|\,R'_a\ra\right)\times D_\infty   \right]\right.\\
&\left. \left.\rtimes\left[ \ZZ_2\times \ZZ_2\right]\right\}\right\}.
\end{align*}
\ee
From this decomposition we could construct a presentation of $\St(cK)$, with generators $\Inv$ and $\Tr$, as in Corollary \ref{cor:stgens}.
\end{example}
\begin{remark}
  In the case when $[x]=\{x\}$ and $\ad(x)\neq \cl(x)$ the group
  $\Aut(\GG([x]))$
  is cyclic of order $2$, generated by the inversion $\i_x$ which permutes
  the elements of $\{x^{\pm 1}\}$. Every element $\phi$ of
  $\St^\v_{x,l}$ maps $x$ to a word $w_1x^\e w_2$,
  where $w_i\in \GG(\ad_{out}(x))$,and $\e=\pm 1$. It follows that
  $\St^\v_{x,l}$ is isomorphic to the wreath product $C_2\wr \GG(\ad_{\out}(x))$.
  However, when $|[x]|\ge 2$ although, similarly,  $\St^\v_{x,l}$ contains an
  subgroup $H$ 
  isomorphic to the wreath product $\textrm{Sym}(L)\wr \GG(\ad_{out}(x))$, it also
  contains elements outside $H$; for example $\tr_{x,y}\tr_{x,a}$, where
  $x,y\in [x]$ and $a\in \ad_{\out}(x)$. 
  \end{remark}
\subsection{Peak reduction in $\Aut(\GG)$}\label{sec:peak}
%The length of an element $g$ of $\GG$ is the length $|g|$  of a word of minimal length in the set of words in $\FF(A)$ representing $g$. 
The \emph{length} of a conjugacy class $c$ of $\GG$ is the minimum of the lengths of words
representing elements of $c$, denoted $|c|_\sim$. 
The \emph{length} of a $k$-tuple $C=(c_1,\ldots,c_k)$ of conjugacy classes is $|C|_\sim=\sum_{i=1}^k|c_i|_\sim$. 
If $\a\in \Aut(\GG)$ and $c$ is a conjugacy class in $\GG$ then by $c\a$ we 
mean the conjugacy class of $w\a$, where $w$ is an element of $c$. If $C=(c_1,\ldots,c_k)$ is a $k$-tuple of conjugacy 
classes of $\GG$ and $\a$ is an automorphism then we write $C\a=(c_1\a,\ldots ,c_k\a)$. 
\begin{definition}\label{def:pr}
Let $\a,\b\in\W$ and let $C$ be a $k$-tuple of conjugacy classes. The composition $\a\b$ is a \emph{peak} with respect to $C$ if 
\[
|C\a|_\sim\ge |C|_\sim\textrm{ and } |C\a|_\sim\ge |C\a\b|_\sim\]
and at least one of  these inequalities is strict.  
Let $\W'$ be a subset of $\W$ and let $\a\b$ be a peak with respect to $C$. 
A \emph{peak lowering} of $\a\b$ for $C$, in $\W'$,  is a factorisation $\a\b=\d_1\cdots \d_s$, such that $\d_i\in \W'$ and 
\[|C\d_1\cdots \d_i|_\sim<|C\a|_\sim,\]
for $1\le i\le  s-1$.

Let $\phi\in \Aut(\GG)$ have factorisation $\phi=\a_1\cdots \a_m$, where $\a_i\in \W$. For $1\le i\le m-1$, this factorisation 
is said to have a \emph{peak} with respect to $C$, at $i$, if $\a_i\a_{i+1}$ is a peak with respect to $C\a_1\cdots\a_{i-1}$. 
If the factorisation has no peak with respect to $C$ it is said to be \emph{peak reduced} with respect to $C$.  
\end{definition}

Day proves \cite[Lemma 3.18]{Day09} that if $C$ is a $k$-tuple of conjugacy classes of $\GG$ and $\a,\b\in \W_l$ such that $\a\b$ is a 
peak with respect to $C$ then
there is a peak lowering of $\a\b$ with respect to $C$, in $\W_l$.  We shall first use 
the peak lowering theorem of \cite{Day09} to show that $\St^\v_{x,l}$ is generated by $\W_x$. Then we shall 
establish that  peak lowering can be carried out in the set $\W_x$, and use this to prove Theorem \ref{thm:stxvf}. 
\subsection{Generators for $\St_x^\v$}\label{sec:stxvgen}
Let $W=(w_1,\ldots, w_k)$ be a $k$-tuple of elements of $\GG$.  The \emph{stabiliser}  $\stab(W)$ of $W$ in $\Aut(\GG)$ is the set 
consisting 
of elements $\a$ such that  $w_i\a=w_i$,  for $1\le i\le k$, whereas the \emph{stabiliser up to conjugacy} $\stab_\sim(W)$ of $W$ is the set of 
elements $\a$ such that $w_i\a$ is conjugate to $w_i$, for $1\le i\le k$, 
(the stabiliser of $W$ as a tuple of conjugacy 
classes). 

\begin{theorem}\label{thm:stgen}
Let $x\in X$. Then $\St_x^\v=\stab(X\backslash [x])$ and $
\St_x^\v$ is generated by $\stab(X\backslash [x])\cap(\W_l\cup \W_s)$. In 
particular $\St^\v_{x,s}$ is generated by  
$\stab(X\backslash [x])\cap \W_s$ and $\St^\v_{x,l}$ is generated by  
$\stab(X\backslash [x])\cap \W_l=\W_x$.
\end{theorem}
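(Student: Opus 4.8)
The plan is to prove the set equality $\St_x^\v=\stab(X\bs[x])$ first, and then to extract the generating sets from peak reduction together with the decomposition $\St_x^\v=\St_{x,l}^\v\ltimes\St_{x,s}^\v$ of Theorem \ref{thm:stxdecomp}. Throughout I use that we are in the case $\ad(x)\neq\cl(x)$, so that $[x]=[x]^{\lk}$ and hence $\lk(x)\cap[x]=\nul$; this is what makes the first equality hold.

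For the equality, the inclusion $\St_x^\v\subseteq\stab(X\bs[x])$ is immediate from the definition of $\St_x^\v$. For the reverse inclusion I would take $\phi\in\stab(X\bs[x])$ and show $\phi\in\St(\cK)$. The crux is the claim that $\supp(w\phi)\subseteq\ad(x)$ for every $w\in[x]$ (and likewise for $\phi^{-1}$, which lies in the same stabiliser). Indeed, since $[x]=[x]^{\lk}$, every $y\in\lk(x)=\lk(w)$ lies in $X\bs[x]$ and is therefore fixed by $\phi$; as $w$ commutes with each such $y$, so does $w\phi$, and since the centraliser of a generator $y$ is $\GG(\st(y))$ we get $w\phi\in\bigcap_{y\in\lk(x)}\GG(\st(y))=\GG\bigl(\bigcap_{y\in\lk(x)}\st(y)\bigr)=\GG(\ad(x))$. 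Granting this, I would verify $\GG(\ad(z))\phi=\GG(\ad(z))$ for each $z\in X$ using Lemma \ref{lem:ad0}: if $\ad(z)\cap[x]=\nul$ then $\ad(z)\subseteq X\bs[x]$ is fixed pointwise, while if $\ad(z)\cap[x]\neq\nul$ then $\ad(x)\subseteq\ad(z)$ by \ref{it:ad8} and \ref{it:adcl1}, so $[x]\phi\subseteq\GG(\ad(x))\subseteq\GG(\ad(z))$ and the remaining generators $\ad(z)\bs[x]$ are fixed.

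With the equality established, I would obtain the generators in two steps. The short-range part is essentially already in hand: by Theorem \ref{thm:stxdecomp}, $\St_{x,s}^\v$ is free Abelian on the short-range transvections $\tr_{x_i,x_j}$ with $x_i\in[x]$ and $x_j\in\ad_\innt(x)$, all of which lie in $\W_s\cap\stab(X\bs[x])$; checking conversely that a short-range Whitehead automorphism fixing $X\bs[x]$ maps $[x]$ into $\GG([x]\cup\ad_\innt(x))$, and so lies in $\St_{x,s}^\v$, gives $\St_{x,s}^\v=\langle\W_s\cap\stab(X\bs[x])\rangle$. For the long-range part I would take $\phi\in\St_{x,l}^\v$, write it as a product of Whitehead automorphisms, and peak-reduce with respect to the tuple $C$ of conjugacy classes of the elements of $X\bs[x]$. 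As each entry of $C$ is a generator, its conjugacy length is minimal in its $\Aut(\GG)$-orbit, and $\phi$ fixes $C$, so the standard McCool-type consequence of Day's peak-lowering theorem yields a factorisation of $\phi$ into Whitehead automorphisms each lying in $\stab_\sim(X\bs[x])$. Splitting every factor as short$\cdot$long via Remark \ref{rem:lsfac}, the short-range factors contribute only $\ad_\innt(x)$-directions and must cancel, since the $\St_{x,s}^\v$-component of $\phi$ in $\St_x^\v=\St_{x,l}^\v\ltimes\St_{x,s}^\v$ is trivial, leaving a product of long-range factors. Once these are promoted to honest stabilisers and identified via $\W_l\cap\stab(X\bs[x])=\W_x$ — matching the explicit form of $\W_x$ recorded after Definition \ref{defn:stx1gen} against Day's long-range condition — we obtain $\St_{x,l}^\v=\langle\W_x\rangle$, and the two parts combine to the asserted generating set.

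The main obstacle I anticipate is the passage from the conjugacy-stabiliser to the honest stabiliser, and the bookkeeping that accompanies it. Peak lowering controls only conjugacy lengths, so a priori it produces factors fixing $X\bs[x]$ merely up to conjugacy; to land in $\stab(X\bs[x])$ I must track the actual images of the fixed generators and argue the conjugating elements are trivial, which should follow from $X\bs[x]$ being part of a generating tuple that $\phi$ fixes on the nose, together with $\lk(x)\cap[x]=\nul$. The same care is needed to guarantee that the short/long splitting of the peak-reduced factors is compatible with the semidirect decomposition of Theorem \ref{thm:stxdecomp}, so that the short parts genuinely cancel and each surviving long part lies in $\W_x$ rather than only in $\W_l\cap\stab_\sim(X\bs[x])$.
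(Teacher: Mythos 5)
Your argument for the set equality $\St_x^\v=\stab(X\bs[x])$ is correct and genuinely different from the paper's. The paper obtains the inclusion $\stab(X\bs[x])\subseteq\St_x^\v$ by first invoking Proposition \ref{prop:stabtuple} to reduce to Whitehead automorphisms lying in $\stab(X\bs[x])$, and then checking those case by case; you instead observe that for $\phi\in\stab(X\bs[x])$ and $w\in[x]$ the element $w\phi$ commutes with every vertex of $\lk(x)$ (these are fixed pointwise, since $\lk(x)\cap[x]=\nul$ when $\ad(x)\neq\cl(x)$), so $w\phi\in\bigcap_{y\in\lk(x)}C_\GG(y)=\GG(\ad(x))$, and then you verify $\GG(\ad(z))\phi=\GG(\ad(z))$ for each $z$ using Lemma \ref{lem:ad0} \ref{it:ad8} and \ref{it:adcl1} and the same claim for $\phi^{-1}$. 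This is more elementary (no peak reduction), works uniformly for all sizes of $X\bs[x]$ (the paper must treat $|X\bs[x]|\le 1$ separately because the tuple proposition needs at least two entries), and correctly isolates the hypothesis $\ad(x)\neq\cl(x)$ on which the theorem silently depends.

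The generation statement, however, has a genuine gap, exactly where you anticipate it. What you need is that $\stab(X\bs[x])$ is generated by $(\W_l\cup\W_s)\cap\stab(X\bs[x])$: the \emph{honest}-stabiliser analogue of Day's Corollary 4.5. This is the paper's Proposition \ref{prop:stabtuple}, and its proof is the technical heart of the section; it cannot be dismissed as bookkeeping. Day's result gives you factors only in $\stab_\sim(X\bs[x])$, and the conversion to factors fixing $X\bs[x]$ on the nose requires: peak-reducing with respect to an enlarged tuple $Z=W^{k(k+1)}V$ whose entries include the pairwise products $w_iw_j$ (this is what forces each long-range factor either to fix all the $w_i$ or to conjugate them all by a common element -- conjugating different entries by different elements would lengthen some $w_iw_j$); moving Type 1 factors to the end; replacing each offending factor $(B_j,b_j)$ by $(L-B_j-\lk_L(b_j)+b_j^{-1},b_j^{-1})$ times an inner automorphism; and finally absorbing the accumulated inner part using that it is conjugation by an element centralising every $w_i$. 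None of this is in your proposal, and the mechanism you sketch in its place does not work: the peak-reduced factors lie only in $\stab_\sim(X\bs[x])$, not in $\St_x^\v$, so you cannot speak of their components under $\St_x^\v=\St_{x,l}^\v\ltimes\St_{x,s}^\v$ (Theorem \ref{thm:stxdecomp}) and argue that ``the short parts cancel.'' Moreover the difficulty sits on the opposite side from where you place it: in Day's factorisation (as used in the paper) the short-range factors already fix the tuple honestly; it is the \emph{long-range} factors that a priori only preserve conjugacy classes. With Proposition \ref{prop:stabtuple} granted, your remaining steps (the short-range identification via Theorem \ref{thm:stxdecomp}, and $\W_l\cap\stab(X\bs[x])=\W_x$, which your set equality makes easy since membership in $\St(\cK)$ comes for free) are fine; without it, the proof is incomplete.
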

To prove this we use the analogue of \cite[Corollary 4.5]{Day09} for 
$\stab(W)$ instead of $\stab_\sim(W)$. 

\begin{prop}[\textit{cf.} {\cite[Corollary 4.5]{Day09}}]\label{prop:stabtuple}
Let $W=(w_1,\ldots ,w_k)$ be a $k$-tuple of elements of $L$, where $k\ge 2$ and $\v(w_i)\neq \v(w_j)$, if $1\le i<j\le k$. Then  the 
subgroup $\stab(W)$ is generated by $(\W_l\cup \W_s)\cap \stab(W)$. 
\end{prop}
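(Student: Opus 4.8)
The plan is to follow the scheme of Day's Corollary 4.5, but working with the honest stabiliser $\stab(W)$ in place of $\stab_\sim(W)$, and to drive the argument by peak reduction with respect to $W$. Since $\Aut(\GG)=\la \W_l\cup\W_s\ra$, any $\phi\in\stab(W)$ factors as $\phi=g_1\cdots g_m$ with each $g_i\in\W_l\cup\W_s$. The aim is to rewrite this product so that every factor lies in $\stab(W)$; this gives $\stab(W)=\la(\W_l\cup\W_s)\cap\stab(W)\ra$, as required.

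First I would observe that, as each $w_i$ is a single letter, $W$ already has minimal length in its $\Aut(\GG)$-orbit, so $|Wg_1\cdots g_i|\ge|W|$ for all $i$ and $|W\phi|=|W|$. Applying the peak-lowering theorem of \cite{Day09} with respect to $W$, I would replace the factorisation by one minimising $\max_i|Wg_1\cdots g_i|$; a short unimodality argument (the length sequence starts and ends at the minimal value, never drops below it, and has no peak, hence is constant) then shows that every intermediate tuple $V_i:=Wg_1\cdots g_i$ is again a tuple of $k$ letters with distinct supports. The step that actually uses the hypotheses $k\ge2$ and $\v(w_i)\neq\v(w_j)$ — and which I expect to be the main obstacle — is to make this reduction yield tuples of honest length $k$ (genuine letters), bridging the conjugacy-length peak reduction of \cite{Day09} to the honest stabiliser.

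Granting that each $V_i$ is a tuple of letters, the remainder is a clean combinatorial argument. The key rigidity is that in $\GG$ a letter is conjugate to no letter but itself. Examining the action of a Type 2 Whitehead automorphism $(A,a)$ on a coordinate $z$ of a letter tuple, the length-preserving possibilities are: $z$ fixed (when $z,z^{-1}\notin A$); $z\mapsto a^{-1}za$ (when $z,z^{-1}\in A$, which is a letter only if $[a,z]=1$, whence $z$ is again fixed); while the transvection case $z\mapsto za$ raises the length. Hence a Type 2 factor that keeps $V_{i-1}$ a letter tuple fixes it coordinate-wise, so $V_i=V_{i-1}$. Thus the running tuple is moved only by the Type 1 factors, which permute $L$; writing $P_i$ for the product of the Type 1 factors among $g_1,\dots,g_i$, we get $V_i=WP_i$.

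It then remains to gather the Type 1 factors. For each Type 2 factor $g_i$ the identity $V_{i-1}g_i=V_{i-1}$ reads $WP_{i-1}g_iP_{i-1}^{-1}=W$, so $g_i':=P_{i-1}g_iP_{i-1}^{-1}$ lies in $\stab(W)$; by relation \ref{it:DR6} it is again a Type 2 Whitehead automorphism, and since conjugation by the graph automorphism $P_{i-1}$ carries $\st(a)$ to $\st(aP_{i-1})$ it is of the same short or long range type as $g_i$, hence lies in $(\W_l\cup\W_s)\cap\stab(W)$. A telescoping computation then gives $\phi=\big(\prod g_i'\big)P_m$, the product being over the Type 2 factors in order. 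Finally $P_m=\big(\prod g_i'\big)^{-1}\phi\in\stab(W)$, while $P_m\in\Aut(\G^{\pm})$, so $P_m$ is a single long-range (Type 1) Whitehead automorphism lying in $\stab(W)$. Thus $\phi$ is a product of elements of $(\W_l\cup\W_s)\cap\stab(W)$, completing the argument. The delicate point throughout is the honest-length control of the peak reduction in the second paragraph; the gathering via \ref{it:DR6} in this last step, by contrast, is precisely where the proof departs from the conjugacy-class version of \cite{Day09}.
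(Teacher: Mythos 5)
There is a genuine gap, and it sits exactly where you flagged it: the step you ask the reader to grant (``Granting that each $V_i$ is a tuple of letters\dots'') \emph{is} the content of the proposition, and it cannot be obtained by the route you sketch. Day's peak reduction operates on tuples of \emph{conjugacy classes} and controls only the conjugacy length $|\cdot|_\sim$. A peak-reduced factorisation whose length sequence is constant at $|W|_\sim=k$ forces each coordinate $w_ig_1\cdots g_i$ to be \emph{conjugate} to a letter, not to be a letter. In particular, a Type 2 factor $(B,b)$ with $w_i^{\pm 1}\in B$ and $[w_i,b]\neq 1$ sends $w_i\mapsto b^{-1}w_ib$: this preserves conjugacy length, creates no peak, and so can never be removed by peak reduction -- inner automorphisms are completely invisible to it. Hence no unimodality argument on the $|\cdot|_\sim$-sequence can deliver honest letter tuples, and honest-length peak reduction is not available from \cite{Day09}. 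Without that bridge, your combinatorial analysis of how Type 2 factors act on letter tuples (which is fine in itself) has nothing to act on, and the telescoping endgame never gets started.

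It is worth comparing with what the paper actually does, because it never proves your granted claim in that form. Starting from the factorisation coming from Day's Proposition C, it moves the Type 1 factors to the end (via Day's relation DR6) and concludes only that each coordinate remains a conjugate of a letter. The real work is then: (i) build the auxiliary tuple $Z=W^{k(k+1)}V$, where $V$ consists of the pairwise products $w_iw_j$ -- this is precisely where $k\ge 2$ and $\v(w_i)\neq\v(w_j)$ enter; (ii) peak reduce the long-range part with respect to $Z$ and use counting estimates to show every intermediate image of each $w_i$ stays conjugate to a letter; (iii) prove the dichotomy that each factor $(B_j,b_j)$ satisfies either $\{w_1,\ldots,w_k\}\subseteq B_j\cup\st_L(b_j)$ or $\{w_1,\ldots,w_k\}\subseteq (L-B_j)\cup\st_L(b_j)$; (iv) rewrite factors of the first kind as $\b_j'\g_{b_j}$ with $\g_{b_j}$ inner and push the inner parts to the right, so that the remaining Type 2 factors genuinely fix $W$; and (v) dispose of the accumulated inner automorphism using that it is conjugation by an element centralising every $w_i$. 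Your final gathering of Type 1 factors is in the same spirit as the paper's last step, but steps (i)--(v) -- the bulk of the paper's proof -- are exactly the missing bridge, so the proposal as it stands is not a proof.
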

\begin{proof}
Let $\a\in \stab(W)$. From \cite[Corollary 4.5]{Day09}, $\a\in \la (\W_l\cup \W_s)\cap \stab_\sim(W)\ra$. In fact, from the proof 
of   \cite[Proposition C]{Day09} there is a factorisation, which is peak reduced with respect to $W$, 
\[\a=\phi_1\cdots \phi_r \s^{-1}\d_1\cdots \d_m,\]
where $\phi_i\in \W_s$, $\s\in \Aut(\G^{\pm})$ and $\d_i\in \W_l$, with $W\phi_i=W$; and so $|W\s^{-1}\d_1\cdots \d_i|_\sim=|W|_\sim$, 
for $1\le i\le m$. If $\d_i$ is of Type 1 then so is $\d_i^\s$ and if $\d_i$ is of Type 2 then so is $\d_i^\s$; and (from DR6) 
\[\s^{-1}\d_1\cdots \d_m=\d_1^\s\cdots \d_m^\s\s^{-1}.\]
Moreover, as elements of $\Aut(\G^{\pm 1})$ do not affect length, $|W|_\sim=|W\s^{-1}\d_1|_\sim=|W\d_1^\s|_\sim$ and similarly 
$|W\s^{-1}\d_1\cdots \d_i|_\sim=|W\d_1^\s\cdots \d_m^\s|_\sim$, for $1\le i\le m$. Hence we may replace the above factorisation
with $\a=\phi_1\cdots \phi_r \d_1^\s\cdots \d_m^\s\s^{-1}$, 
which is also peak reduced with respect to $W$. 

Continuing in this way we may move any of the $\d_i$ which are of Type 1 to the right hand end of the factorisation, until we have, 
after renaming, a peak reduced factorisation 
\[\a=\phi_1\cdots \phi_r \d_1\cdots \d_m\s^{-1},\]
satisfying $\phi_i\in \W_s\cap \stab(W)$, $\s\in \Aut(\G^{\pm})$, $\d_i\in \W_l$ of Type 2, and $|W\d_1\cdots \d_i|_\sim=|W|_\sim$, 
for $1\le i\le m$.

This means that  
$|W\d_1|_\sim=|W|_\sim$ and $|w_i\d_1|_\sim\ge 1$, with equality only if either $w_i\d_1=w_i$ or $w_i\d_1=w_i^a$, for some $a\in L$. It follows
that $w_i\d_1=w_i$ or $w_i^a$, $a\in L$, for $1\le i\le k$. That is, writing $\d_i=(A_i,a_i)$, with $A_i\cap \lk_L(a_i)=\nul$,  
and partitioning $A_i\backslash \{a_i\}$ 
 as the disjoint union $A_{i,0}\cup A_{i,1}$, where $A_{i,0}=A_{i,0}^{-1}$ and $A_{i,1}\cap A_{i,1}^{-1}=\nul$, as in Remark  
\ref{rem:whrem}.\ref{it:whrem2}, 
we have $\{w_1,\ldots,w_k\}\cap A_1\backslash \{a_1\} =\{w_1,\ldots,w_k\}\cap A_{1,0}$  and $\{w_1,\ldots,w_k\}\cap A_{1,1}^{\pm 1}=\nul$). 
Hence, $w_i$ is a minimal length representative of the conjugacy class of  $w_i\d_1$, for $1\le i\le k$.

Assume now that, for some $j\ge 1$, 
$w_i$ is a minimal length representative of the conjugacy class of  $w_i\d_1\cdots \d_j$, for $1\le i\le k$. Then
$w_i\d_{j+1}$ is a representative of the conjugacy class of  $w_i\d_1\cdots \d_j\d_{j+1}$ and, if it has length $2$ it
is conjugate to no shorter element. Therefore $w_i\d_{j+1}$ is again equal to $w_i$ or $w_{i}^{a_{j+1}}$, 
\[\{w_1,\ldots, w_k\}\cap A_j\backslash\{a_j\}\subseteq A_{j,0}\textrm{ and } \{w_1,\ldots, w_k\}\cap A_{j,1}^{\pm 1}=\nul,\]
 and 
$w_i$ is a representative of the conjugacy class of  $w_i\d_1\cdots \d_j\d_{j+1}$, for $1\le i\le k$. 

Therefore, there exist $g_i\in \GG$ such that $w_i\d_1\cdots \d_m=g_i^{-1}\circ w_i\circ g_i$. As $W=W\d_1\cdots \d_m\s^{-1}$, this
implies that $g_i^{-1}\circ w_i\circ g_i=w_i\s\in L$. Hence $g_i=1$ and $w_i\s=w_i$, for $1\le i\le k$. Thus 
$\s\in \Aut(\G^{\pm 1})\cap \stab(W)$ and $W=W\d_1\cdots \d_m$, where $\d_i\in \W_l$ is of Type 2, for $i=1,\ldots ,m$.

If $V=(v_1,\ldots, v_k)$ and $W=(w_1,\ldots, w_l)$ are tuples of elements (or conjugacy classes) of $\GG$, let 
$VW$ denote the concatenation $(v_1,\ldots, v_k,w_1,\ldots, w_l)$ of $V$ and $W$. Inductively define $W^n=W^{n-1}W$, for $n\ge 2$. 
Given $W=(w_1,\ldots, w_k)$ let $V_i=(w_i^2,w_iw_{i+1},\ldots ,w_iw_k)$, for $1\le i\le k$, let 
\[V=V_1\cdots V_k \textrm{ and } Z=W^{k(k+1)}V,\]
so $|Z|_\sim = k(k+1)^2$, and $Z$ is fixed point wise by $\a$, $\phi_i$, $\d_1\cdots \d_m$ and $\s$. 

Let $\a_1=\d_1\cdots \d_m$. Applying \cite[Lemma 3.18]{Day09}, we may choose a factorisation 
\[\a_1=\b_1\cdots \b_t,\]
which is peak reduced with respect to $Z$, where $\b_j\in \W_l$, for all $j$.  
Let $j$ be minimal such that there exists some $i$ with $w_i\b_1\cdots \b_j$ not equal to a conjugate of an element of $L$.
Then $|w_i\b_1\cdots \b_{s-1}|_\sim=1$, and $w_i\b_1\cdots \b_{j-1}=v_i$, for some $v_i\in L$, 
for all $i,s$ such that $1\le i\le k$ and $1\le s\le j$; while there exists $i$ such that $v_i\b_j=v_ia$ or $av_i$, for some 
$a \in L$. Assume that there are precisely $r$ elements of $i\in \{1,\ldots, k\}$ such that $v_i\b_j$ is not conjugate to an element
of $L$; so $|W \b_1\cdots \b_j|_\sim=k+r>k$. As each element of the tuple $V \b_1\cdots \b_j$ is conjugate to an element of length at least 
$1$, we have $|V \b_1\cdots \b_j|_\sim\ge k(k+1)/2$. Therefore
\[
|Z \b_1\cdots \b_j|_\sim\ge (k+r)\left(
%\frac{
k(k+1)
%}{2}%
\right)
+\frac{k(k+1)}{2}>k(k+1)^2=|Z|_\sim.
\]
As $|Z\b_1\cdots \b_t|=|Z|$ and $\b_1\cdots \b_t$ is peak reduced with respect to $Z$, this cannot occur, there is no such $j$, and 
$w_i\b_1\cdots \b_j$ is conjugate to an element of $L$, for all $i,j$.

Next we shall move all $\b_j$'s of Type 1 to the right hand end of the factorisation of $\a_1$. 
We may assume no two consecutive $\b_i$'s are  of Type 1. Let  $j$ be 
minimal such that $\b_j$ is of Type 1 and assume that $j<t$. 
 Assume first that $\b_s$ is of Type 2, for all $s>j$. 
Let $Z'=Z\b_1\cdots \b_{j-1}$, so writing $\t=\b_j^{-1}$, 
\[Z\b_1\cdots \b_t=Z'\b_j\cdots \b_t=Z'\b_{j+1}^\t \cdots \b_t^\t\b_j.
\]
Also, for $j+1\le s\le t$, 
\[
|Z|_\sim=|Z'|_\sim=|Z'\b_j\b_{j+1}\cdots \b_s|_\sim=|Z'\t\b_{j+1}\cdots \b_s\t|_\sim=|Z'\b_{j+1}^\t\cdots \b_s^\t|_\sim,
\]
so 
\[\a_1=\b_1\cdots \b_{j-1}\b_{j+1}^\t \cdots \b_t^\t\b_j
\]
 is also peak reduced with respect to $Z$.
In the case where $\b_s$ is also of Type 1, for some $s$ such that $j<s\le t$ we set $\b'_s=\b_j\b_s$, so $\b'_s$ is of Type 1, and 
the same argument shows that the factorisation 
\[
\a_1= \b_1\cdots \b_{j-1}\b_{j+1}^\t \cdots \b_{s-1}^\t\b'_s\b_{s+1}\cdots \b_t
\]
 is  peak reduced with respect to $Z$. 
In all cases the new factorisation has fewer elements of 
Type 1 to the left of elements of Type 2 than the original factorisation.
Thus, continuing in this way we may assume that we have a factorisation 
$\a_1=\b_1\cdots \b_t\s_1$ which is peak reduced with respect to $Z$, such that $\b_i$ is of Type 2 in $\W_l$ and $\s_1$ is of Type 1. 
Moreover, from the above, for $1\le i\le k$ and $1\le j\le t$ we have  $w_i\b_1\cdots \b_j$ conjugate to an element of $L$; so 
we have $w_i\b_1\cdots \b_j$ conjugate to $w_i$, for all $i,j$.  
Also as above, since $Z\a_1=Z$ it follows that $w_i\s_1=w_i$ and $w_i\b_1\cdots \b_t=w_i$, for all $i$. 

Given a factorisation $\a_1=\b_1\cdots \b_t\s_1$ with the properties above, let $\b_j=(B_j,b_j)$, where 
$B_j\cap \st_L(b_j)=\{b_j\}$, for all $j$. We claim that, for all $j$, 
either 
\be
\item\label{it:wint1} $\{w_1,\ldots ,w_k\}\subseteq B_j\cup \st_L(b_j)$ or 
\item\label{it:wint2}  $\{w_1,\ldots ,w_k\}\subseteq (L-B_j)\cup \st_L(b_j)$. 
\ee
To prove this claim, assume that $j$ is minimal such that the claim does not hold. Note that, as $w_i\b_1\cdots\b_j$ is conjugate
to $w_i$, we have $w_i\in B_j\backslash\{b_j\}$ if and only if $w_i^{-1}\in B_j\backslash\{b_j\}$. Suppose that, for some $p,q$ we have 
$w_p\in B_j$, $w_p\neq b_j^{\pm 1}$ and $w_q\in L-(B_j\cup\st_L(b_j))$.
Then $[w_p,b_j]\neq 1$ and $w_p\b_j=\b_j^{-1}\circ w_p\circ \b_j$ while $[w_q,b_j]\neq 1$ and $w_q\b_j=w_q$. 
Hence $w_pw_q\b_j=b_j^{-1}\circ w_p\circ b_j\circ w_q$; a minimal length representative of its conjugacy class in $\GG$. 

By assumption the claim holds for $1,\ldots , j-1$, so there exists $g\in \GG$ such that $w_i\b_1\cdots \b_{j-1}=w_i^g$. 
Thus $w_pw_q\b_1\cdots \b_{j}$ is conjugate to $w_pw_q\b_j$ and therefore $|w_pw_q\b_1\cdots \b_{j}|_\sim\ge 4$. It follows
that  $|w_qw_p\b_1\cdots \b_{j}|_\sim\ge 4$ and either $w_pw_q$ or $w_qw_p$ occurs in $V$. For all other $w_rw_s$ in $V$ we
have $|w_rw_s\b_1\cdots \b_{j}|_\sim\ge 2$ and $|W\b_1\cdots \b_{j}|_\sim=|W|_\sim$. Hence 
\[
|Z\b_1\cdots \b_{j}|_\sim\ge |Z|_\sim +2,
\]
a contradiction. Therefore no such $j$ exists, and the claim holds.

Now let $j$ be minimal such that \ref{it:wint1} above holds. Define 
\[\b'_j=(L-B_j-\lk_L(b_j)+b_j^{-1},b_j^{-1}).\]
Then $\b_j'\in \W_l$,  %\g_{b_j}$, 
%where $\g_{b_j}$ is conjugation by $b_j$. 
 $\b_j=\b'_j\g_{b_j}$, where $\g_{b_j}$ is conjugation by $b_j$, and  
\[
\b_1\cdots \b_t=\b_1\cdots \b_{j-1}\b'_j\g_{b_j}\b_{j+1}\cdots \b_t=\b_1\cdots \b_{j-1}\b'_j\b_{j+1}\cdots \b_t\g',
\] 
for some $\g'\in \Inn(\GG)$. As $\{w_1,\ldots ,w_k\}\subseteq B_j\cup \st(b_j)$, for the latter factorisation, $j$ satisfies condition
\ref{it:wint2} above. We have thereby reduced the number of indices $j$ for which \ref{it:wint1} holds.  Continuing this way
we may assume $\a_1=\b_1\cdots \b_t\g\s_1$, where $\b_j$ is of Type 2 and  satisfies \ref{it:wint2} above, for $j=1,\ldots ,t$, $\g\in\Inn(\GG)$ and 
$\s_1\in \W_l$ of Type 1, such that $W\s_1=W$. In this case $W=W\b_1\cdots \b_t=W\g$, so $\g$ is conjugation by $g\in \GG$ such that 
$g\in C_\GG(w_i)$, for $1\le i\le k$. Every element of $\Inn(\GG)$ is a product of elements $\W_l$ and it follows that $\g$ is a product
of elements of $\W_l\cap \stab(W)$. Hence we have a factorisation of $\a_1$ as a product of elements of $\stab(W)\cap \W_l$.  
As $\phi_i\in \W_s\cap \stab(W)$ and $\s\in \Aut(\G^{\pm 1})\cap \stab(W)$, it follows that $\a$ belongs to the subgroup generated by
$(\W_l\cup \W_s)\cap \stab(W)$, as required.
\end{proof}
\begin{proof}[Proof of Theorem \ref{thm:stgen}]
The final statement follows from the first, in view of Theorem \ref{thm:stxdecomp} and Definition \ref{defn:stx1gen}.
By definition, $\St^\v_x\subseteq \stab(X\backslash [x])$. %Suppose that 
%$\phi \in \stab(X\backslash [x])$ and let $y\in X$.  
%If $\ad(y)\cap [x]=\nul$ 
%then $\phi$ fixes $\ad(y)$ pointwise. Assume then that $\ad(y)\cap [x]$ is 
%non-empty; so $[x]\subseteq \ad(y)$.

For the opposite inclusion, first consider the case $|X\bs[x]|\le 1$. If $|X\bs [x]|=0$, then
$\stab(X\bs [x])=\Aut(\GG)=\St^\v_x$ and the Theorem follows from the results of Laurence and Servatius.
If $|X\bs[x]|= 1$ then $\GG(\G)= \la a \ra\times \GG([x])$, where $\la a \ra$ is infinite cyclic
generated by  the element $a$ of $X\bs [x]$ and $\GG([x])$ is the  free group on $[x]$. 
If $\phi \in \stab(X\bs [x])$ then $a\phi=a$ and, for all $x\in X$,  $x\phi=w_xa^{n_x}$, where
$w_x\in \GG([x])$ and $n_x\in \ZZ$. Hence $\phi \in \St_x^\v$. Let $\t=\prod_{x\in X}\tr_{x,a}^{n_x}$ and let $\theta=\t^{-1}\phi$.
Then $a\theta=a$ and $x\theta=(xa^{-n_x})\phi=w_x\in \GG([x])$, for all $x\in X$. Therefore $\theta$ restricts to
an element of $\Aut(\GG([x]))$; which can be written as a product of Whitehead automorphisms of $\GG([x])$, and these
may all be regarded as Whitehead automorphisms of $\GG$, fixing $a$, and necessarily in $\W_l$, hence in $\W_l\cap \stab(X\bs [x])$.
Moreover $\t$ is a product of elements of $\W_s$, which are also in $\stab(X\bs [x])$. Therefore $\phi$ is in $\St_{x}^\v$ and
is a product of elements of $(\W_l\cup \W_s)\cap \stab(X\bs [x])$, as required.

Now consider the case $|X\bs[x]|\ge 2$.  From Proposition \ref{prop:stabtuple}, 
%$\stab(X\backslash [x])$ is generated by $\stab(X\backslash [x])\cap (\W_l\cup \W_s))$. It therefore 
it suffices to show that every element of $\stab(X\backslash [x])\cap (\W_l\cup \W_s))$ 
belongs to $\St^\v_x$. 
If $\s$ is a Type 1 element of $\stab(X\backslash [x])\cap (\W_l\cup \W_s))$ then 
$\s$ permutes elements of $[x]_L$ and fixes all other elements of $L$, so belongs 
to $\St^\v_x$. If $(A,a)$ is of Type 2 in $\stab(X\backslash [x])\cap (\W_l\cup \W_s))$
then, by definition of $\stab(X\backslash [x])$, $A-a\subseteq [x]_L$. It remains to
show that $a\in \ad(x)^{\pm 1}$. If $a\notin \ad(x)^{\pm 1}$ then there is $y\in \lk(x)$ such
that $a\notin \st_L(y)$. In this case $\lk(x)\nsubseteq \st(a)$ and so 
$A-a$ consists of the  vertices of a union of connected components of $\G\backslash \st(a)$, and their inverses.  
Since $A-a$ contains some element of $[x]_L$ (as we assume $(A,a)$ is non-trivial) this means that $y\in A$, a contradiction. Therefore $(A,a)\in  \St^\v_x$. 
\end{proof}
\subsection{Peak reduction for $\St_x^\v$}\label{sec:peakxv}
First note that the elements of $\W_{x}$ are all, by definition, long range and in fact 
$\W_x=\St_x^\v\cap \W_l$. We shall  need the following Lemma in the proof of Lemma \ref{lem:Daypl}. 
\begin{lemma}\label{lem:321*}
Let $\a=(A,a)$ and $\b=(B,b)$ be Type 2 elements in $\W_l$ and 
 let  $C$ be a $k$-tuple of conjugacy classes of $\GG$, such that $A\subseteq B$ and $\a^{-1}\b$ is a peak for $C$.  
In this case 
\be[label=(\roman*)]
\item\label{it:321*1} if  $a^{-1}\in B$ then 
$|C\b|_\sim <|C\a^{-1}|_\sim$ and 
\item\label{it:321*2} if 
$b\notin A$ 
%$a^{-1}\notin B$ 
then $|C(B-A+a,b)|_\sim<|C\a^{-1}|_\sim$.  
\ee
\end{lemma}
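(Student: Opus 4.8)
The plan is to reduce both statements to the length-change formula for long-range Whitehead automorphisms acting on a tuple of conjugacy classes, as developed in \cite{Day09}, and then to exploit the inclusion $A\subseteq B$ term by term. First I would recall that for a long-range Type~2 automorphism $(D,d)$ and a cyclically reduced conjugacy class $c$, the difference $|c(D,d)|_\sim-|c|_\sim$ can be written as a sum over the ``corners'' of the cyclic word representing $c$ (the consecutive letter pairs occurring in it) of a local term depending only on whether each of the two letters, and its inverse, lies in $D$, together with its commutation with $d$. Summing over the classes comprising $C$ expresses $|C(D,d)|_\sim-|C|_\sim$ as a sum of such corner contributions. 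I would set this up once and then instantiate it three times: for the peak automorphism $\alpha^{-1}=(A-a+a^{-1},a^{-1})$ (using \ref{it:DR1}), for $\beta=(B,b)$, and for $(B-A+a,b)$.

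Next I would translate the peak hypothesis. By Definition~\ref{def:pr}, $\alpha^{-1}\beta$ being a peak for $C$ gives $|C\alpha^{-1}|_\sim\ge |C|_\sim$ and $|C\alpha^{-1}|_\sim\ge |C\alpha^{-1}\beta|_\sim$, with at least one inequality strict. Feeding the length formula into these two inequalities converts them into relations among the corner counts relative to the set $A$ and the letter $a^{-1}$ for the first, and relative to $A$, $B$, $a^{-1}$ and $b$ jointly for the second (since $C\alpha^{-1}\beta$ applies $\beta$ after $\alpha^{-1}$). These are the arithmetic inequalities I would carry into the case analysis.

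For part \ref{it:321*1}, with $a^{-1}\in B$, the inclusion $A\subseteq B$ together with $a^{-1}\in B$ gives $A-a+a^{-1}\subseteq B$, so every corner that contributes positively to $|C\alpha^{-1}|_\sim-|C|_\sim$ can be matched with a corresponding contribution to $|C\beta|_\sim-|C|_\sim$; combining this domination with the two peak inequalities should force strictness and yield $|C\beta|_\sim<|C\alpha^{-1}|_\sim$. Part \ref{it:321*2} is parallel: $(B-A+a,b)$ is exactly $\beta$ with the overlapping part $A$ deleted from its set and $a$ reinstated as a corner letter, so under $b\notin A$ and $A\subseteq B$ the same termwise comparison against $\alpha^{-1}$ applies. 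When the membership and commutation constraints coincide with those of \ref{it:DR3*} or \ref{it:DR4*}, one can alternatively read off the same conclusion from the algebraic identity $\alpha^{-1}\beta=\beta(\alpha')^{-1}$ supplied by those relations.

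The hard part will be the partially commutative bookkeeping. Unlike the free-group case, an inserted letter may be absorbed into a commuting neighbour or slide across letters commuting with the multiplier, so the corner term is not a simple indicator flip, and the fact that $\alpha^{-1}$ and $\beta$ carry \emph{different} multipliers ($a^{-1}$ versus $b$) means the ``termwise domination'' above must be justified with care rather than taken for granted. Extracting a strict inequality — rather than merely $\le$ — from the ``at least one strict'' clause of the peak is the most delicate point: I expect to need to isolate a single corner witnessing the strict peak inequality and verify that, because of $A\subseteq B$ (and $a^{-1}\in B$, respectively $b\notin A$), this same corner registers a net gain for $\beta$, respectively for $(B-A+a,b)$, over $\alpha^{-1}$.
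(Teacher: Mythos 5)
There is a genuine gap, and it sits exactly where your proposal defers the work. The entire content of the lemma is the strict length inequality, and your ``termwise domination'' of corner contributions is asserted, not proved; you yourself flag that the two automorphisms carry different multipliers ($a^{-1}$ versus $b$) and that extracting strictness is delicate, but these are not bookkeeping issues to be handled later --- they are the whole difficulty. A corner-by-corner comparison cannot work in the generality you state it: the contribution of a corner under $(D,d)$ depends on how its letters commute with the multiplier $d$, so the inclusion $A-a+a^{-1}\subseteq B$ of defining sets says nothing, by itself, about how a corner's contribution under $\a^{-1}$ compares with its contribution under $\b$. Indeed, without invoking the peak hypothesis globally the conclusion of the lemma is simply false, yet a purely local domination argument of the kind you sketch would never notice the peak hypothesis; so any correct completion of your plan must reproduce the full Higgins--Lyndon/Day counting argument, i.e.\ it amounts to reproving \cite[Lemma 3.21]{Day09} from scratch in the nested case. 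Finally, the suggested shortcut via \ref{it:DR3*} or \ref{it:DR4*} is a non sequitur: those relations are identities between automorphisms and carry no length information whatsoever, so nothing about $|C\b|_\sim$ can be ``read off'' from them.

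The paper's proof avoids all of this by a short reduction of the nested case to the disjoint case already covered by \cite[Lemma 3.21]{Day09}. Write $\b=\b^*\g_b$, where $\b^*=(L-B-\lk_L(b),b^{-1})$ and $\g_b=(L-b^{-1},b)$ is conjugation by $b$. Since $\g_b$ is inner it does not change conjugacy lengths, so $\a^{-1}\b^*$ is a peak for $C$ whenever $\a^{-1}\b$ is; and the hypothesis $A\subseteq B$ translates precisely into the disjointness $A\cap(L-B-\lk_L(b))=\nul$ required by Day's lemma, with $a^{-1}\in B$ (respectively $b\notin A$) supplying its membership hypotheses. Part \ref{it:321*1} then follows because $|C\b|_\sim=|C\b^*|_\sim$. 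For part \ref{it:321*2} there is a further step your proposal has no analogue of: one must identify the automorphism $(B-A+a,b)$ whose length behaviour is being asserted, and the paper does this algebraically, using \ref{it:DR4} to get $\a^{-1}\b^*\a=(A+B^*-a,b^{-1})$ and hence $(B-A+a,b)=\a^{-1}\b^*\a\g_b=\a^{-1}\b\a$, so that $|C(B-A+a,b)|_\sim=|C\a^{-1}\b^*\a|_\sim$, which Day's lemma bounds strictly by $|C\a^{-1}|_\sim$. In short: the missing idea is the factorisation through the inner automorphism $\g_b$, which converts $A\subseteq B$ into disjointness and makes the existing lemma apply; without it, your outline is a plan to redo Day's hardest counting argument rather than a proof.
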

\begin{proof}
Let $\b^*=(L-B-\lk_L(b),b^{-1})$. Then  $\b=\b^*\g_b$, where $\g_b=(L-b^{-1},b)$ is conjugation by $b$. 
As $A\subseteq B$ we have $A\cap (L-B-\lk_L(b))=\nul$.
\be[label=(\roman*)]
\item
 As $a^{-1}\in B$ we have $a^{-1}\notin L-B-\lk_L(b)$. 
Also $|C\a^{-1}\b|_\sim=|C\a^{-1}\b^*\g_\b|_\sim=|C\a^{-1}\b^*|_\sim$, as $\g_g$ is inner, and 
  $\a^{-1}\b$ is a peak for $C$, so $\a^{-1}\b^*$ is a peak for $C$. From \cite[Lemma 3.21]{Day09}, we have
$|C\b^*|_\sim<|C\a^{-1}|$, and again $|C\b|_\sim=|C\b^*|_\sim$. 
\item 
As before, since $\a^{-1}\b$ is a peak for $C$, so is $\a^{-1}\b^*$, and so $(\b^*)^{-1}\a$ is a peak for
$C^*=C\a^{-1}\b^*$. As $b\notin A$, from \cite[Lemma 3.21]{Day09}, we have
\[|C\a^{-1}\b\a|_\sim=|C\a^{-1}\b^*\a|_\sim<|C\a^{-1}\b^*(\b^*)^{-1}|_\sim=|C\a^{-1}|_\sim.\]
Write $B^*=L-B-\lk_L(b)$. From DR4, we have $\a^{-1}\b^*\a=(A+B^*-a,b^{-1})$ so 
\[
\a^{-1}\b\a=\a^{-1}\b^*\g_b\a=\a^{-1}\b^*\a\g_b=(A+B^*-a,b^{-1})\g_b=(B-A+a,b).\]
\ee
\end{proof}
\begin{lemma}[\textit{cf.} {\cite[Lemma 3.18]{Day09}}]\label{lem:Daypl}
 Let $C$ be a $k$-tuple of conjugacy classes of $\GG$ and $\a,\b\in \W_x$ such that $\a^{-1}\b$ is a peak with respect to $C$. Then
\be[label=PL\arabic*.,ref=PL\arabic*]
\item\label{it:Dpl1}
there is a peak lowering  $\a^{-1}\b=\d_1\cdots \d_s$ with respect to $C$, in $\W_x$, and 
\item\label{it:Dpl2}
the relation $\a^{-1}\b=\d_1\cdots \d_s$ follows from \ref{it:DR1}--\ref{it:DR6}, \ref{it:DR3*} and \ref{it:DR4*} above. 
\ee
\end{lemma}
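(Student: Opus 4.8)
The plan is to mimic the peak-reduction argument of \cite[Lemma 3.18]{Day09}, running the case analysis on the peak $\a^{-1}\b$ but organising it so that every factor produced lies in $\W_x$ and every elementary step is an instance of a relation in $R_x$; this establishes \ref{it:Dpl1} and \ref{it:Dpl2} simultaneously. Write $\a=(A,a)$ and $\b=(B,b)$. If either automorphism is of Type 1 the peak is degenerate, since a Type 1 automorphism preserves $|\cdot|_\sim$, and it is resolved by conjugating the other factor past it using \ref{it:DR6} and \ref{it:DR7}; so I may assume both are of Type 2. Since $\W_x=\St_x^\v\cap\W_l$ the factors will automatically be long range, so the real content is twofold: checking that the factorisations stay inside $\W_x$, and dealing with the configurations $A\subseteq B$ (and, symmetrically, $B\subseteq A$) that do not arise in Day's vertex-multiplier setting.

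First I would record the closure property. As $\a,\b\in\W_x$ we have $A\setminus\{a\},\,B\setminus\{b\}\subseteq[x]_L$ and $a,b\in[x]_L\cup\ad_{\out,L}(x)$. The factors arising in peak lowering have multipliers among $a^{\pm1},b^{\pm1}$ and domains contained in $A\cup B$ up to a change of sign of the multiplier, so their domains again lie in $[x]_L$ modulo the multiplier. The only way to leave $\W_x$ would be for an element of $\ad_{\out,L}(x)$ to appear in the domain of a factor with a different multiplier; this can happen only through the situations $a^{-1}\in B$ or $b\in A$, and in each of these the offending element lies in $B\setminus\{b\}\subseteq[x]_L$ (respectively $A\setminus\{a\}\subseteq[x]_L$), which forces the relevant multiplier into $[x]_L$. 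Thus precisely in the cases that need special treatment the multipliers are free generators of $\GG([x])$, and this is exactly what keeps the produced automorphisms inside $\W_x$.

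With closure secured, the analysis divides as follows. In the generic configurations — disjoint domains, commuting multipliers, the swap of two $[x]_L$-multipliers, or the configuration handled by \ref{it:DR4} — the factorisations and the strict length inequalities are those of \cite[Lemmas 3.18 and 3.21]{Day09}, justified by \ref{it:DR2}--\ref{it:DR6} together with \ref{it:DR1} to interchange $\a$ and $\a^{-1}$; these relations all belong to $R_x$. The remaining configurations are those with $A\subseteq B$ in which a sign of one multiplier meets the other domain. When $a^{-1}\in B$ the factorisation is $\a^{-1}\b=\b\gamma$, where $\gamma=\b^{-1}\a^{-1}\b$ is rewritten by \ref{it:DR3*} (if $b\notin A$) or \ref{it:DR4*} (if $b\in A$), and the intermediate length drops because $|C\b|_\sim<|C\a^{-1}|_\sim$ by Lemma \ref{lem:321*}\ref{it:321*1}; the estimate of Lemma \ref{lem:321*}\ref{it:321*2} governs the parallel factorisation that arises, after possibly interchanging $\a$ and $\b$ via \ref{it:DR1}, when $b\notin A$. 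In every branch the factorisation lowers length and the defining identity is a consequence of \ref{it:DR1}--\ref{it:DR6}, \ref{it:DR3*} and \ref{it:DR4*}, as required.

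The main obstacle is to show that this case list is exhaustive once the argument is confined to $\W_x$: I must verify that the configurations $A\subseteq B$ are exactly the peaks that Day's relations cannot resolve by words over $\W_x$, so that adjoining \ref{it:DR3*} and \ref{it:DR4*} genuinely closes the gap, and that in each such configuration the hypotheses of Lemma \ref{lem:321*} are met, possibly after interchanging $\a$ and $\b$ and applying \ref{it:DR1}. Confirming this completeness, in tandem with the $\W_x$-closure check of the second paragraph, is where the delicate bookkeeping lies; the individual length estimates are already isolated in Lemma \ref{lem:321*} and in \cite[Lemma 3.21]{Day09}, so no further metric computation should be needed.
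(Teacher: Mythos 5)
There is a genuine gap, and it is precisely the point you defer as ``delicate bookkeeping'': your case list is not exhaustive, and the completeness claim you hope to verify is false. After Type 1 factors, commuting multipliers ($a\in\lk_L(b)$), disjoint domains ($A\cap B=\nul$) and containment ($A\subseteq B$), there remains the genuinely overlapping configuration in which $A\cap B$, $A\cap B^c$ and $A^c\cap B$ are all non-empty. In Day's proof this is Case 4, and it is resolved there by replacing $\a=(A,a)$ by its complement $\a^*=(L-A-\lk_L(a),a^{-1})$ so as to fall back into the disjoint case; that replacement is exactly what is forbidden here, since the domain of $\a^*$ contains all of $L$ outside $A\cup\lk_L(a)$ and hence elements far outside $[x]_L$, so $\a^*\notin\W_x$. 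These partial-overlap peaks are not lowered by \ref{it:DR1}--\ref{it:DR6} inside $\W_x$, nor by \ref{it:DR3*} or \ref{it:DR4*}, whose hypotheses require $A\subseteq B$. So the problematic peaks are not ``exactly'' the containment ones, and your argument has no mechanism for the overlap case at all.

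The paper closes this case not with a further relation but with McCool's splitting argument. By Day's adjacency-counter argument (\textit{cf.} \cite[Lemma 3.17]{Day09}), one of the corner automorphisms $\a_1=(A\cap B,a)$, $\a_2=(A\cap B^c,a)$, $\a_3=(A^c\cap B,a^{-1})$, $\a_4'=(A^c\cap B^c,a^{-1})$ is defined and shortens $|C\a^{-1}|_\sim$; the corner $\a_4'$ lies outside $\W_x$, but composing with the inner automorphism $\g_a$ replaces it by $\a_4=(A\cup B,a)$, which shortens the same conjugacy length, and then each relevant corner lies in $\W_x$. Setting $\hat\a_i=\a^{-1}\a_i$, computed inside $\W_x$ via \ref{it:DR1} and \ref{it:DR2}, one writes $\a^{-1}\b=\hat\a_i(\a_i^{-1}\b)$ and observes that $\a_i^{-1}\b$ is a peak with respect to $C\hat\a_i$ whose domains are (up to multiplier) disjoint or nested, so the already-established Cases 3 and 3$^*$ apply; this simultaneously lowers the peak and keeps every identity among the relations $R_x$. (A further sub-case, when $\v(a)=\v(b)$ or $\lk_L(a)\neq\lk_L(b)$, needs a separate reduction to the $i=2$ and $i=4$ corners, possibly after interchanging $\a$ and $\b$.) A secondary omission of the same kind: within $A\subseteq B$, the sub-case $a^{-1}\notin B$, $b\in A$ is covered by neither of your two branches; there $a,b\in[x]_L$, and the paper must invoke the adjacency-counter dichotomy (either $(A,b)$ or $(B,a)$ shortens $|C\a^{-1}|_\sim$) together with relation \ref{it:DR5}, which appears nowhere in your outline.
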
 
\begin{proof}
We may assume that $C=(c_1,\ldots c_k)$ where $c_i$ is a minimal representative of its conjugacy class; so
$c_i$ and all its cyclic permutations are minimal words.  From \cite[Lemma 3.18]{Day09} there is a peak lowering of 
$\a^{-1}\b$ in $\W_l$. We work through Cases 1 to 4 of the proof of Lemma 3.18 in \cite{Day09} to show that in the 
case in hand we may find such a factorisation satisfying \ref{it:Dpl1} and \ref{it:Dpl2}. Cases 1 to 3 go through
in the same way as they do in \cite{Day09}. To cope with Case 4, without using automorphisms from outside $\W_x$, 
we extend the treatment of Case 3, following McCool \cite{Mccool74}. 
(Note that in \cite{Day09} automorphisms act on the left, whereas here automorphisms act on the right.)
\\[1em]
\textbf{Case 1.}  The case where $\a\in \Aut(\G^{\pm})$: that is $\a$ is in $\W_x$ and of Type 1. The peak 
lowering factorisation in \cite{Day09} is 
\begin{equation}\label{eq:pl1}
\a^{-1}\b=\b'\a^{-1},
\end{equation} where $\b=(B,b)$ and $\b'=(B\a,b\a)$. As 
$\a$ and $\b$ are in $\W_x$, we have $b\a=b$ or $b\a\in [x]_L$ and $(B-b)\a\subseteq [x]_L$; so $\b'\in \W_x$. 
Moreover, relation \eqref{eq:pl1} follows from \ref{it:DR6}. 

From now on we assume $\a$ and $\b$ are of Type 2, $\a=(A,a)$ and $\b=(B,b)$. Moreover, for elements $a,b\in L$, with
$a\neq b^{\pm 1}$ we define $\s_{a,b}$ to be the element of $\Aut(\G^{\pm})$ which fixes all elements of $L$ not
 equal to $a^{\pm 1}$ or $b^{\pm 1}$, maps $a$ to $b^{-1}$ and $b$ to $a$ (as in \ref{it:DR5}).\\[1em]
\textbf{Case 2.} The case $a\in \lk_L(b)$. 
 From \ref{it:DR3}, that $\a^{-1}\b=\b\a^{-1}$, so, as in \cite{Day09}, both \ref{it:Dpl1} and \ref{it:Dpl2} hold.\\[1em]
\textbf{Case 3.} 
The conditions of this case are that $A\cap B=\nul$ and $a\notin \lk_L(b)$. The case is broken (in \cite{Day09}) into
three sub-cases, a, b and c. As noted in \cite{Day09} these three sub-cases exhausts all
possibilities in Case 3.  \\[1em]
\textbf{Sub-case 3a.} 
In this sub-case $\v(a)=\v(b)$. From \cite{Day09} we have a peak lowering
\[\a^{-1}\b=(A+B+a^{-1},a^{-1})=(A-a+a^{-1},a^{-1})(B,a^{-1}),\]
which follows from \ref{it:DR1} and \ref{it:DR2}, since $(A+B+a^{-1},a^{-1})\in \W_x$.\\[1em]
\textbf{Sub-case 3b.} 
In this sub-case $a^{-1}\notin B$. If $b^{-1}\notin A$ then $b^{\pm 1}\notin A$ and so from \ref{it:DR3},
we have $a^{-1}\b=\b\a^{-1}$. As in \cite{Day09}, this factorisation is peak lowering. 

If $b^{-1}\in A$ then, 
%$(B-b+a^{-1},a^{-1})\in \W_x$, from \ref{it:DR2} we have 
%\[
%(B-b+a^{-1},a^{-1})\a^{-1}=(A+B-b+a^{-1},a^{-1})\in \W_x,
%\]
%and 
from \ref{it:DR4},  we have %$\a^{-1}\b=\b(B-b+a^{-1},a^{-1})\a^{-1}$. As in \cite{Day09}, the 
%factorisation 
\[\a^{-1}\b=\b(A+B-a-b+a^{-1},a^{-1}),\]
and, as in \cite{Day09}, this factorisation is peak lowering.\\[1em]
\textbf{Sub-case 3c.} 
In this sub-case $\v(a)\neq \v(b)$, $a^{-1}\in B$ and $b^{-1}\in A$. 
 The conditions that $a^{-1}\in B$ and $\v(a)\neq \v(b)$ imply that $a^{-1}\in B-b\subseteq [x]_L$. 
Similarly, $b^{-1}\in [x]_L$, so $a^{\pm 1},b^{\pm 1}\in [x]_L$. It follows that $\a'=(A,b^{-1})$, 
$\b'=(B,a^{-1})$ and $(B-a^{-1}+a-b+b^{-1},a)$ are in $\W_x$, as is the element $\s_{a,b}$ of  $\Aut(\G^{\pm})$. From \ref{it:DR5},
\[(\b')^{-1}\b=\s_{a,b}(B-a^{-1}+a-b+b^{-1},a)\]
and from \ref{it:DR2},
\[\a^{-1}\b'=(A+B-a,a^{-1})\in \W_x.\]
Hence 
\[\a^{-1}\b= \a^{-1}\b'(\b')^{-1}\b = (A+B-a,a^{-1})\s_{a,b}(B-a^{-1}+a-b+b^{-1},a)\]
and as in \cite{Day09} this factorisation is peak lowering; so \ref{it:Dpl1} and \ref{it:Dpl2} hold in this case.

This concludes Case 3. Some instances of peak lowering in Case 4, in \cite{Day09}, require $\a$ to be replaced by $\a^*=(L-A-\lk_L(a),a^{-1})$,
and if $\a\in\W_x$ then  $\a^*$ is not. To avoid this replacement we consider the analogue of Case 3 in which 
we assume $A\subseteq B$ instead of $A\cap B=\nul$. (In the usual treatment of peak lowering these cases follow, 
after switching
$\a$ and $\a^*$, or making a similar switch for $\b$.)  \\[1em]
\textbf{Case 3$^*$.} Assume $A\subseteq B$ and $a\notin \lk_L(b)$. 
We break the case into three sub-cases.\\[1em]
\textbf{Sub-case  3$^*$a.}  
In this sub-case $\v(a)=\v(b)$. As $A\subseteq B$ this implies $a=b$. Then
\[\a^{-1}\b=(A-a+a^{-1},a^{-1})(A,a)(B-A+a,a)=(B-A+a,a),\]
is a peak lowering factorisation and, as $(B-A+a,a)\in\W_x$, this relation follows from \ref{it:DR2} and 
\ref{it:DR1}, so \ref{it:Dpl1} and \ref{it:Dpl2} hold.\\[1em]
\textbf{Sub-case 3$^*$b.}
Assume $a^{-1}\in B$. As $a\in B$ this implies that $\v(a)\neq \v(b)$. If $b\notin A$ then, from 
\ref{it:DR3*}, $\a^{-1}\b=\b\a^{-1}$. 

If $b\in A$ then $(B-A+b^{-1},a^{-1})\in \W_x$ and, from \ref{it:DR4*}, $\a^{-1}\b=\b(B-A+b^{-1},a^{-1})^{-1}$.
In both cases it follows from Lemma \ref{lem:321*} that these factorisations are peak lowering. \\[1em]
\textbf{Sub-case  3$^*$c}
In this sub-case $\v(a)\neq \v(b)$ and $a^{-1}\notin B$. 
 If $b\notin A$ then $(B-A+a,b)\in \W_x$ so from \ref{it:DR4},
%\begin{align*}
%(A-a+b,b)&(B-A+a,b)=\b=\\
%&(A-a+a^{-1},a^{-1})^{-1}(B-A+a,b)(A-a+a^{-1},a^{-1}).
%\end{align*}
\[
\b=(A-a+a^{-1},a^{-1})^{-1}(B-A+a,b)(A-a+a^{-1},a^{-1}).
\]
This gives a factorisation $\a^{-1}\b=(B-A+a,b)\a^{-1}$, which
Lemma \ref{lem:321*} \ref{it:321*2} implies is peak lowering. 

If $b\in A$ then $a,b\in [x]_L$, so $a\sim b$ and $\a_b=(A,b)$ and 
$\b_a=(B,a)$ are defined and in $\W_x$. Also, as in \cite{Day09}, 
(using an adjacency counter argument and \cite[Lemma 3.17]{Day09})
either $\a_b$ or $\b_a$ reduces $|C\a^{-1}|_\sim$. 
 Assume first that $|C\a^{-1}\b_a|_\sim<|C\a^{-1}|_\sim$. 
We have, from  \ref{it:DR1} and \ref{it:DR2}, that  $\a^{-1}\b_a=(B-A+a,a)$ and so 
%(from \ref{it:DR2}) 
\begin{align*}
\a^{-1}\b&= (B-A+a,a)\b_a^{-1}\b\\
&=(B-A+a,a)\s_{a^{-1},b}(B+a^{-1}-a+b^{-1}-b,a^{-1}),\\
\end{align*} 
using \ref{it:DR5}. 
As $\s_{a^{-1},b}$ preserves lengths and $|C(B-A+a,a)|_\sim<|C\a^{-1}|_\sim$, this
factorisation is peak lowering. 

On the other hand, if $|C\a^{-1}\a_b|_\sim<|C\a^{-1}|_\sim$ then,
from \ref{it:DR2} and\ref{it:DR6}, 
\[
\b=\a_b(B-A+b,b) \textrm{ and }\s_{a,b}^{-1}\a_b\s_{a,b}=(A-b+b^{-1},a),
\]
 so 
\begin{align*}
\a^{-1}\a_b&=\a^{-1}\s_{a,b}(A-b+b^{-1},a)\s_{a,b}^{-1}\\
&=(A-a+a^{-1},b)\s_{a,b}^{-1},
\end{align*}
using \ref{it:DR5} 
and, since $|C(A-a+a^{-1},b)\s_{a,b}^{-1}|_\sim<|C\a^{-1}|_\sim$, the factorisation 
\[\a^{-1}\b=(A-a+a^{-1},b)\s_{a,b}^{-1}(B-A+b,b)\]
is peak lowering. 
\begin{comment}
adjacency counters with respect to $a$ and $b$ are the same.\\
\textbf{Adjacency counters have not been defined}\\
Counting adjacency with respect to $C\a^{-1}$ we have, as in \cite{Day09}, 
we have $D(\a)+D(\b)<0$ and 
\begin{align*}
0&>D(\a)+D(\b)\\
& = \la D\ra
\end{align*} 
\begin{equation}\label{eq:adj}
D(\a_b)<0\textrm{ or } D(\b_a)<0
\end{equation}
\end{comment}
~\\[1em]
\textbf{Case 4.} In the light of Cases 1 to 3 and 3$^*$ above, and since we may interchange $\a$ and $\b$, we may now 
assume $a\notin \lk_L(b)$ and $A\cap B$, $A\cap B^c$  and $A^c\cap B$ are all non-empty.\\[1em]
\textbf{Sub-case  4a.}   
In this case we assume $\lk_L(a)= \lk_L(b)$ and $\v(a)\neq \v(b)$; so $a\sim b$. As in \cite{Mccool74} (using the 
form of adjacency counting defined in \cite{Day09}) 
after interchanging $\a$ and $\b$ if necessary, we may assume one 
of $\a_1=(A\cap B,a)$, $\a_2=(A\cap B^c,a)$, $\a_3=(A^c\cap B,a^{-1})$ or $\a_4'=(A^c\cap B^c,a^{-1})$ is a well-defined 
Whitehead automorphism and 
 reduces 
$|C\a^{-1}|_\sim$. If $\a_4'$ is defined and reduces $|C\a^{-1}|$, 
then by composing with the inner
automorphism $\g_a$, we see that $\a_4=(A\cup B,a)$ is also defined and also reduces $|C\a^{-1}|$. %, and in addition $a_4\in \W_x$. 
In addition, if it is defined,  
$\a_i\in \W_x$, for $i=1,\ldots ,4$. 
(In fact $\a_1$ is defined if $a\in B$, $\a_2$ if $a\notin B$, $\a_3$ if $a^{-1}\in B$ and $\a_4$ if $a^{-1}\notin B$.) 
Now, for $i=1,\ldots ,4$, define 
$\hat\a_i=\a^{-1}\a_i$. Using \ref{it:DR1} and \ref{it:DR2} we have, when the map in question is defined,  
\begin{align*}
\hat\a_1&=(A\cap B^c+a,a)^{-1},\\% \, (a\in B)\\
\hat\a_2&=(A\cap B+a,a)^{-1},\\%\, (a\notin B)\\
\hat\a_3&=(A\cup B-a^{-1},a)^{-1}\textrm{ and}\\%\, (a^{-1}\in B)\\
\hat\a_4&=(A^c\cap B+a,a).%,\, (a^{-1}\notin B).
\end{align*}

Assume then  $1\le i\le 4$ and that $\a_i$ is defined and shortens $|C\a^{-1}|$. Then  $|C\hat\a_i|_\sim=|C\a^{-1}\a_i|_\sim<|C\a^{-1}|$; 
so $\a_i^{-1}\b$ is a peak with respect to $C\hat\a_i$.  
If $i=1,2$ or $4$ then Case 3$^*$ gives  a peak-lowering of $\a_i^{-1}\b$, with respect to $C\hat\a_i$, in $\W_x$. 
If $i=3$, then Case 3 gives  a peak-lowering of $\a_i^{-1}\b$, with respect to $C\hat\a_i$, in $\W_x$. 
In all cases we have a peak lowering factorisation  
\[\a_i^{-1}\b=\d_1\ldots \d_k,\]
with $\d_i\in \W_x$. Therefore, as   $\hat \a_i\in \W_x$, 
\[\a^{-1}\b=\hat\a_i\d_1\ldots \d_k,\]
is a peak-lowering factorisation  of $\a^{-1}\b$, in $\W_x$. 
Moreover this factorisation follows from the relations R$_x$. \\[1em]
\textbf{Sub-case  4b.}   
In this case we assume that $\v(a)=\v(b)$ or $\lk_L(a)\neq\lk_L(b)$.
We break this sub-case into two further sub-cases:  either $a\in B$ or $b\in A$; or  $a\notin B$ and   $b\notin A$. 
\be[label=(\roman*)]
\item If $b\in A$ but $a\notin B$ then interchanging $\a$ and $\b$ we obtain $a\in B$. Hence we may 
 assume that $a\in B$. In this case either $a=b$  or $\v(a)\neq \v(b)$.  If $a=b$ then  $a^{-1}=b^{-1}\notin A\cup B$. 
If  $\v(a)\neq \v(b)$ then $a\in B\backslash\{b\}\subseteq [x]_L$ and, as $\lk_L(a)\neq\lk_L(b)$, we have $b\notin [x]_L$, 
so $b^{\pm 1}\notin A$. In both cases $b^{-1}\notin A$ and $a\notin B^*$. If $A\cap B^*=\nul$ then $A\subseteq B\cup \lk_L(b)$ 
and, as $\a,\b\in \W_x$, we have $A\bs\{a\}\subseteq [x]_L$ and $b\in [x]_L\cup \ad_{\out,L}(x)$, 
so $A\bs\{a\}\cap \lk_L(b)=\nul$. As $a\in B$ this implies $A\subseteq B$, a contradiction. Hence, 
$A\cap B^*\neq \nul$ and, as $\a^{-1}\b$ is a peak for $C$ so is 
$\a^{-1}\b^*$. As in \cite[Sub-case 4b]{Day09} both $(A^*\cap B^*,b^{-1})$ and $(A\cap (B^*)^*,a)=(A\cap B,a)$ are defined, 
and one or other 
reduces the conjugacy length of $C\a^{-1}$. If   $(A\cap B,a)$ shortens $C\a^{-1}$ then, as $(A\cap B,a)\in \W_x$, we 
may construct a peak lowering as in the case when $i=2$ of Sub-case 4a above; via elements of $\W_x$ and following from the 
relations R$_x$. If $(A^*\cap B^*,b^{-1})$ shortens the conjugacy length of $C\a^{-1}$ then so does $(A\cup (B^*)^*,b)=(A\cup B,b)$. 
In this case, after interchanging $\a$ and $\b$ we construct a peak lowering, with the required properties, as in the case when 
$i=4$ of Sub-case 4a above.
\item
If $a\notin B$ and  $b\notin A$ then we are in the same situation as Sub-case 4b in \cite{Day09}. 
In this case both $(A\cap B^*,a)$ and $(A^*\cap B,b)$ are defined, necessarily in $\W_x$, and one or other
of them reduces the conjugacy length of $C\a^{-1}$. If this conjugacy length is reduced by $(A\cap B^*,a)$ then 
we may construct a peak lowering, with the required properties, as in Sub-case 4a above, where $i=2$. For the remaining
case we first interchange $\a$ and $\b$ and then proceed as before. 
\ee
\end{proof}
\subsection{Proof of Theorem \ref{thm:stxvf}}\label{sec:proofstxvf}
Let $P$ be the group with presentation $\la \W_x\,|\,R_x\ra$. The canonical map from $P$ to $\St^\v_{x,l}$, taking $\a\in \W_x$ to 
its realisation as an automorphism of $\GG$, induces a surjective homomorphism, in view of Theorem \ref{thm:stgen} and the fact that
all the relations of $R_x$ hold in $\St^\v_{x,l}$.
 It remains to show that this homomorphism is also injective. 

For the duration of this section $C_2$ denotes a fixed tuple of words of $\GG$ of length $2$, such that 
$C_2$  contains precisely one representative of each conjugacy class of $\GG([x])$ of length $2$. Note that, if $y, z\in [x]_L$, with
$y\neq z^{\pm 1}$ it follows that either $yz$ or $zy$ is in $C_2$.   
 
\begin{lemma}\label{lem:nored}
If $\b\in\W_x$ and $|C_2\b|_\sim\le |C_2|_\sim$ then either 
\be[label=(\roman*)]
\item\label{it:nored1} $\b$ is of Type 1, or 
\item\label{it:nored2}  $\b=(B,b)$, where $b\in [x]_L$ and  $B=[x]_L-b^{-1}$, or 
\item\label{it:nored3}  $\b=(B,b)$, where $b\in \ad_{\out,L}(x)$ and  $B=[x]_L$.
\ee
In  \ref{it:nored1}  $|C_2\b|=|C_2|$ and in all cases $|C_2\b|_\sim=|C_2|_\sim$.
% and $C_2\b$ is 
%a tuple of words containing precisely one representative of each conjugacy class of $\GG([x])$ of length $2$. 
\end{lemma}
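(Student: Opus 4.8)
The plan is to evaluate $\b$ on each length-$2$ conjugacy class of $\GG([x])$ and to sum the resulting changes in conjugacy length. Since $\b\in\W_x\le\St_{x,l}^\v$, every $y\in[x]$ is carried into $H:=\GG([x]\cup\ad_\out(x))$; as $[x]$ spans a null graph with no edges to $\ad_\out(x)$, this is a free product $H=\GG([x])\ast\GG(\ad_\out(x))$ with $\GG([x])$ free of rank $p=|[x]|$. Because $H$ is a parabolic subgroup of $\GG$, the conjugacy length in $\GG$ of an element of $H$ is read off from its cyclically reduced form in $H$, so the whole computation may be carried out inside the free product $H$. I would first dispose of the two easy directions. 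If $\b$ is of Type $1$ it permutes $[x]_L$, preserves $\GG([x])$, and is length-preserving on words, so $|C_2\b|=|C_2|$ and in particular $|C_2\b|_\sim=|C_2|_\sim$, giving \ref{it:nored1}. If $\b=(B,b)$ has the form of \ref{it:nored2} or \ref{it:nored3}, then every $u\in[x]_L\bs\{b^{\pm1}\}$ has $u,u^{-1}\in B$, so $u\b=b^{-1}ub$; thus $\b$ restricts on $\GG([x])$ to conjugation by $b$, fixes every conjugacy class there, and again $|C_2\b|_\sim=|C_2|_\sim$.

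For the converse I would assume $\b=(B,b)$ is of Type $2$ and is neither \ref{it:nored2} nor \ref{it:nored3}, and show $|C_2\b|_\sim>|C_2|_\sim$. Recording the image of each letter as $u\b=l_u\,u\,r_u$, with $l_u\in\{1,b^{-1}\}$ (according as $u^{-1}\notin B$ or $u^{-1}\in B$) and $r_u\in\{1,b\}$ (according as $u\notin B$ or $u\in B$), and $u\b=u$ when $u=b^{\pm1}$, a length-$2$ class $[uv]$ is sent to the cyclic word $u\,s_1\,v\,s_2$, where $s_1=r_ul_v$ and $s_2=r_vl_u$ each lie in $\{1,b,b^{-1}\}$. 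Provided $u,v\ne b^{\pm1}$, no inserted $b^{\pm1}$ can cancel against a letter of $[x]_L$, so the contribution of $[uv]$ to $|C_2\b|_\sim-|C_2|_\sim$ is exactly $|s_1|+|s_2|\ge 0$.

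Splitting on the position of $b$ then finishes the argument. When $b\in\ad_{\out,L}(x)$ no $u\in[x]_L$ equals $b^{\pm1}$, so every contribution is nonnegative and $|C_2\b|_\sim\ge|C_2|_\sim$; equality forces $s_1=s_2=1$ for all pairs. Testing the classes $[u^2]$ shows each $u$ must be conjugated ($u,u^{-1}\in B$) or fixed ($u,u^{-1}\notin B$), and testing a mixed pair $[uv]$ rules out fixing one generator while conjugating another; since $\b$ is a nontrivial Type $2$ automorphism it cannot fix all of $[x]_L$, so $[x]_L\subseteq B$, which is exactly case \ref{it:nored3}, contrary to assumption.

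The remaining subcase $b\in[x]_L$ is the genuine difficulty, and is where I expect the main obstacle to lie: here test words containing $b^{\pm1}$ can \emph{shorten} (for instance $[bv]\b=[v]$ when $v\notin B$ and $v^{-1}\in B$), so the per-class bound of the previous paragraph fails and one must show that such decreases are always outweighed by the increases coming from classes such as $[u^2]$. This is precisely the classical Whitehead length-change computation for the free group $\GG([x])$, and I would carry it out (in the form underlying \cite[Lemma 3.18]{Day09}) on the Whitehead graph of $C_2$ — one vertex for each element of $[x]_L$, one edge per link of a length-$2$ cyclic word — to conclude $|C_2\b|_\sim-|C_2|_\sim\ge 0$, with equality if and only if $\b$ acts on $\GG([x])$ as conjugation by the generator $b$, i.e. case \ref{it:nored2}. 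Everything outside this last subcase is bookkeeping on the insertions $s_1,s_2$.
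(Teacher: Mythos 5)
Your proposal is correct and follows essentially the same route as the paper: the easy direction by observing that forms \ref{it:nored2} and \ref{it:nored3} act as conjugation by $b$ on $[x]_L$, the case $b\in\ad_{\out,L}(x)$ by counting insertions of $b^{\pm1}$ that cannot cancel against letters of $[x]_L$, and the case $b\in[x]_L$ by reduction to the classical free-group fact that a Whitehead automorphism which does not increase the total length of all length-$2$ conjugacy classes must act as an inner automorphism. The only differences are cosmetic: the paper disposes of that hard case by citing \cite[Theorem 5.2]{Day09} rather than carrying out the Whitehead-graph computation you sketch, and it treats $|[x]|=1$ separately at the outset.
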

\begin{proof}
  In the case where $|[x]|=1$, without loss of generality we may assume that $C_2=(x^2,x^{-2})$. 
  If $\b$ does not map $x$ to $x^{\pm 1}$ or to a conjugate of $x$ then evidently $|C_2\b|_\sim>|C_2|_\sim$. Therefore the result holds in this case.

  Assume then that $|[x]|\ge 2$. 
If $\b$ is of Type 1, then the claims of the Lemma follow from the definition of $\W_x$. 
Assume then that $\b$ is not of Type 1, so $\b=(B,b)$, where $B-b\subseteq [x]_L$. 
If $b\in [x]_L$ then $\b|_{\GG([x])}$ is an automorphism of the free group $\GG([x])$. From, 
for example, \cite{Day09}[Theorem 5.2], in this case the restriction of $\b$ is  an
inner automorphism of this group. Hence $B=[x]_L-b^{-1}$, as claimed. 

On the other hand, if $b\notin[x]_L$ then $b\in \ad_{\out,L}(x)$. In this case, if $y$ and $z$ are elements of $[x]_L$, with 
$y\neq z^{\pm 1}$, $y\in B$ and $z\notin B$ then $yz\b=ybz$ or $yz\b=b^{-1}ybz$, in both cases a cyclically minimal word of length at least $3$. 
We may assume  
$yz\in C_2$, and as $b$ is not in $[x]_L$, no conjugacy class in $C_2$ has its length reduced by $\b$. Hence
$|C_2\b|_\sim>|C_2|_\sim$, a contradiction. It follows, since the identity map is of Type 1, that $B=[x]_L$.
In both cases \ref{it:nored2} and \ref{it:nored3} the map $\b$ acts by conjugation on $[x]_L$, so the final 
statement of the Lemma holds. 
\end{proof}

%\begin{lemma} Let $\a\in \St^\v_{x,l}$ have factorisation $\a=\a_1\cdots \a_n$, where $\a_i\in \W_x$, which
%is peak reduced  with respect to $C_2$. Then $C_2\a$ is a tuple containing precisely one representative of each
%conjugacy class of words of length $2$ in $\GG([x])$.
%\end{lemma}

We shall call elements of $\W_x$ of the form occurring in \ref{it:nored1}, \ref{it:nored2} and \ref{it:nored3} generators of \emph{Type}
$1_x$, $2a_{x}$ and $2b_x$, respectively. 
Now
\begin{itemize}
\item let $\a\in \FF(\W_x)$, say $\a=\phi_1\cdots \phi_n$, where $\phi_j\in \W_x \,(=\W_x^{-1})$, and this word is reduced.
\end{itemize}
We may assume that the length of the word $\phi_1\ldots \phi_n$ cannot be reduced by application of relations \ref{it:DR1} or \ref{it:DR7}.  
\begin{lemma}\label{lem:facta}
Assume $\phi_j$ is of Type $1_x$, $2a_{x}$ or $2b_x$, for $j=1,\ldots ,n$, and $\phi_1\cdots \phi_n$ is peak reduced with respect to $C_2$. Then
there exist elements $\a_i,\b_i$ and $\s$ of $\W_x$ such that $\a_i$ is of type $2a_x$, $\b_i$ is of Type $2b_x$, $\s$ is of Type $1_x$ and 
\be[label=(\roman*)]
\item in the group $P$ the element $\a$ is equal to $\a_1\cdots \a_r\b_1\cdots \b_s \s$ and  
\item in $\St^\v_{x,l}$ the factorisation  $\a=\a_1\cdots \a_r\b_1\cdots \b_s \s$ is peak reduced with respect to $C_2$.
\ee
\end{lemma}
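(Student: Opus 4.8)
The plan is to reach the required form by a two-stage sorting procedure which produces (i) using the relations of $R_x$, while (ii) will come essentially for free. The starting observation is that each of the three admissible types acts on $\GG([x])$ in a very restricted way: a Type $1_x$ factor is a permutation of the generators $[x]_L$ (fixing everything outside $[x]$), a Type $2a_x$ factor $(A,a)$ with $a\in[x]_L$ and $A=[x]_L-a^{-1}$ is the inner automorphism conjugating $[x]$ by $a$, and a Type $2b_x$ factor $(B,b)$ with $b\in\ad_{\out,L}(x)$ conjugates the whole of $[x]$ by $b$. Thus Type $2a_x$ and Type $2b_x$ send every element of $\GG([x])$ to a $\GG$-conjugate of itself, hence fix its $\GG$-conjugacy class, while Type $1_x$ permutes the length-$2$ conjugacy classes of $\GG([x])$ among themselves. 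By induction on the length of an initial segment, every partial product $C_2\phi_1\cdots\phi_j$ is again a tuple of $\GG$-conjugacy classes each having a length-$2$ representative in $\GG([x])$, so $|C_2\phi_1\cdots\phi_j|_\sim=|C_2|_\sim$ for all $j$. Consequently no word built from these three types can contain a peak with respect to $C_2$: the hypothesis that $\phi_1\cdots\phi_n$ be peak reduced is automatic, and assertion (ii) holds for the final factorisation for the same reason. It therefore remains to prove the reordering (i) inside $P$.

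First I would collect all Type $1_x$ factors at the right. Whenever a Type $1_x$ permutation $\s$ stands immediately to the left of a Type $2$ factor $(C,c)$, relation \ref{it:DR6} gives $\s(C,c)=(C\s^{-1},c\s^{-1})\s$. Since $\s$ permutes $[x]_L$ and fixes $\ad_{\out,L}(x)$ pointwise (and commutes with inversion, being a genuine automorphism), the pair $(C\s^{-1},c\s^{-1})$ is again of Type $2a_x$ when $(C,c)$ is, and is the unchanged $(C,c)$ when it is of Type $2b_x$. Repeating this moves every Type $1_x$ factor past all the Type $2$ factors to the right; the accumulated permutations are then amalgamated into a single $\s$ of Type $1_x$ using \ref{it:DR7}. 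This yields, in $P$, an expression $\a=\psi_1\cdots\psi_m\,\s$ in which each $\psi_j$ is of Type $2a_x$ or $2b_x$.

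Second I would sort the $\psi_j$ so that every Type $2a_x$ factor precedes every Type $2b_x$ factor, by repeatedly swapping an adjacent pair that is out of order. The key point is that a Type $2a_x$ generator $(A,a)$ and a Type $2b_x$ generator $(B,b)$ always satisfy the hypotheses of \ref{it:DR3*}: here $a^{-1}\in[x]_L\subseteq B$, $b\in\ad_{\out,L}(x)$ so $b\notin A\subseteq[x]_L$, and $A=[x]_L-a^{-1}\subseteq B$. Hence \ref{it:DR3*} gives $(B,b)^{-1}(A,a)(B,b)=(A,a)$, i.e.\ the two generators commute in $P$. Each swap of a Type $2b_x$ factor rightwards past a Type $2a_x$ factor is thus a single application of \ref{it:DR3*}, leaves both generators unchanged, and preserves the relative order of the Type $2b_x$ factors among themselves. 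After finitely many swaps we reach $\a=\a_1\cdots\a_r\b_1\cdots\b_s\,\s$ with each $\a_i$ of Type $2a_x$, each $\b_i$ of Type $2b_x$ and $\s$ of Type $1_x$, which is (i); and (ii) follows from the first paragraph.

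The step needing the most care, and the reason the augmented relation \ref{it:DR3*} was adjoined to $R_x$, is precisely the commutation in the second stage: because $a^{-1}\in B$, Day's unmodified relation \ref{it:DR3} does not apply, so the commutation of a Type $2a_x$ with a Type $2b_x$ generator cannot be obtained from the original relations alone. I therefore expect the main obstacle to be the bookkeeping of the Whitehead-set notation $(A,a)$ throughout: verifying the side conditions of \ref{it:DR6} and \ref{it:DR3*} for every combination of types (including inverses, which remain within their type by \ref{it:DR1}), and confirming that conjugation by a Type $1_x$ permutation maps each Type $2$ form to a Type $2$ form of the same kind. The degenerate case $|[x]|=1$, in which no Type $2a_x$ generator exists (so $r=0$), should be noted separately but causes no difficulty, as it is already isolated in the proof of Lemma \ref{lem:nored}.
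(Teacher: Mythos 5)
Your proof is correct and takes essentially the same route as the paper's: first push the Type $1_x$ factors to the right using \ref{it:DR6} (conjugating each Type $2$ factor to one of the same type) and amalgamate them via \ref{it:DR7}, then commute Type $2b_x$ factors past Type $2a_x$ factors using \ref{it:DR3*}, whose hypotheses you verify exactly as the paper does. The only difference is cosmetic: you note that peak-reducedness with respect to $C_2$ is automatic, since all three types preserve the conjugacy length of tuples of classes of length-$2$ elements of $\GG([x])$, whereas the paper carries the peak-reduced property through each rewriting step; both arguments rest on that same length-preservation fact.
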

\begin{proof}
Let $i$ be maximal such that $\phi_i$ is of Type $2a_x$ or $2b_x$ and $\phi_{i-1}$ is of Type $1_x$. Then $\phi_i=(A,a)$. Let $\phi'_i=(A\phi_{i-1}^{-1},a\phi_{i-1}^{-1})$. 
From the definitions, $\phi'_i$ is of the same Type as $\phi_i$
 and 
relations \ref{it:DR6} imply that $\phi_{i-1}\phi_i=\phi'_i\phi_{i-1}$ in $P$. 

Let $C_{2,j}=C_2\phi_1\cdots \phi_j$, for $j=1,\ldots, n$, and $C_{2,0}=C_2$. As $\phi_1\cdots \phi_n$ is peak reduced with respect to $C_2$ it follows,
from Lemma \ref{lem:nored}, that $|C_{2,j}|_\sim=|C_2|_\sim$, for all $j$. As $\phi_{i-1}$ does not alter lengths, and $\phi_{i}$ and 
$\phi'_i$ preserve conjugacy lengths of elements of $\GG([x])$, we have \[|C_{2,i-2}|_\sim=|C_{2,i-2}\phi'_{i}|_\sim= |C_{2,i-2}\phi'_{i}\phi_{i-1}|_\sim,\]
from which it follows (as $\phi'_i\phi_{i-1}=\phi_{i-1}\phi_i$ in $\Aut(\GG)$) that the factorisation $\a=\phi_1\cdots\phi'_i\phi_{i-1}\cdots \phi_n$ 
is peak reduced with respect to $C_2$. Continuing this way we may move all the $\phi_i$ of Type $1_x$ to the right hand side, and use relations 
\ref{it:DR7},  to obtain 
$\a=\g_1\cdots \g_m\s$ in $P$, where $\g_i$ is of Type $2a_x$ or $2b_x$, $\s$ is of Type $1_x$ and the factorisation is peak reduced with 
respect to $C_2$. 

Now let $i$ be maximal such that $\g_i$ is of type $2a_x$ and $\g_{i-1}$ is of Type $2b_x$. By definition of Types $2a_x$ and $2b_x$, relations
\ref{it:DR3*} imply that $\g_{i-1}\g_i=\g_i\g_{i-1}$. As in the previous case, since $\g_1\cdots \g_m\s$  is peak reduced with respect to $C_2$, so
is $\g_1\cdots\g_i\g_{i-1} \cdots \g_m\s$. Continuing this way gives the required result. 
\end{proof}
\begin{lemma}\label{lem:xcon}
Let $y$ be a word of length $2$ in $\GG([x])$ and let $\a=\a_1\cdots \a_r\b_1\cdots \b_s \s$ be a factorisation of $\a$, 
satisfying  conditions of the conclusion of Lemma \ref{lem:facta}. 
Then $y\a=z^{uv}$, where $z$ is a word of length $2$ in $\GG([x])$, $u\in \GG([x])$ and $v\in \GG(\ad_\out(x))$. 
\end{lemma}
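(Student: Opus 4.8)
The plan is to trace the image of the length-$2$ word $y$ through the factorisation $\a=\a_1\cdots\a_r\b_1\cdots\b_s\s$, using the explicit description of the three generator types. By Lemma \ref{lem:nored}, each $\a_i$ and each $\b_j$ acts on $[x]_L$ by conjugation and $\s$ permutes $[x]_L$. I would first record this precisely. Writing $\a_i=(B_i,b_i)$, a Type $2a_x$ generator has $b_i\in[x]_L$ and $B_i\subseteq[x]_L$, so it conjugates $[x]_L$ by $b_i$ and fixes every generator lying outside $[x]_L$ (such a generator and its inverse are not in $B_i$); hence $\a_i$ restricts to the inner automorphism of the free group $\GG([x])$ given by conjugation by $b_i\in\GG([x])$. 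Writing $\b_j=(B_j,c_j)$, a Type $2b_x$ generator has $c_j\in\ad_{\out,L}(x)$ and $B_j=[x]_L$, so it conjugates every letter of $[x]_L$ by $c_j$ and fixes $\ad_\out(x)$ pointwise; in particular it fixes every letter introduced by $\b_1,\dots,\b_{j-1}$. Finally $\s$ permutes $[x]_L$, respecting inverses, and fixes $\ad_\out(x)$ pointwise.

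With these actions recorded I would compose them in order. As each $\a_i$ is inner on $\GG([x])$ by $b_i$, their composite is conjugation by $u_0=b_1\cdots b_r\in\GG([x])$, whence $y\a_1\cdots\a_r=u_0^{-1}yu_0$. For the $\b$-stage I would show, by induction on $j$, that every $W\in\GG([x])$ satisfies $W\b_1\cdots\b_j=v_j^{-1}Wv_j$ with $v_j=c_j\cdots c_1\in\GG(\ad_\out(x))$; the inductive step uses that $\b_j$ conjugates the $\GG([x])$-part of $v_{j-1}^{-1}Wv_{j-1}$ by $c_j$ while fixing the outer letters $v_{j-1}$, so the two conjugators telescope. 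Applying this to $W=u_0^{-1}yu_0$ yields $y\a_1\cdots\a_r\b_1\cdots\b_s=(u_0v_0)^{-1}y(u_0v_0)=y^{u_0v_0}$ with $v_0\in\GG(\ad_\out(x))$.

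It remains to apply $\s$. Since $\s$ is a homomorphism fixing $\ad_\out(x)$ pointwise and permuting $[x]_L$, we obtain $y\a=(y^{u_0v_0})\s=(y\s)^{(u_0\s)v_0}$. Setting $z=y\s$, $u=u_0\s\in\GG([x])$ and $v=v_0\in\GG(\ad_\out(x))$ gives $y\a=z^{uv}$, in the required form; here $z$ has length $2$ because $\s$ permutes the free generators of $\GG([x])$ together with their inverses, hence carries the reduced length-$2$ word $y$ to a reduced length-$2$ word. The degenerate case $|[x]|=1$ needs no separate treatment, since then there are no non-trivial Type $2a_x$ generators and the same bookkeeping applies.

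The computation is routine once the generator actions are pinned down, so the only delicate point is the telescoping in the $\b$-stage: I must check that each $\b_j$ really fixes the $\ad_\out(x)$-letters already produced by $\b_1,\dots,\b_{j-1}$ — which is precisely the statement that a Type $2b_x$ pair fixes $\ad_\out(x)$ pointwise, as $B_j=[x]_L$ is disjoint from $\ad_{\out,L}(x)$ — and that the conjugators from the two stages do not interfere. They accumulate cleanly on the left as $u_0\in\GG([x])$ and $v_0\in\GG(\ad_\out(x))$ exactly because the $\b_j$ fix $\GG(\ad_\out(x))$ while the $\a_i$ preserve $\GG([x])$; this separation is what forces the conjugating element to factor as $uv$ with $u\in\GG([x])$ and $v\in\GG(\ad_\out(x))$.
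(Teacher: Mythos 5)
Your proposal is correct and follows essentially the same route as the paper's proof: both trace $y$ through the factorisation using the explicit actions of the three generator types (Type $2a_x$ inner on $\GG([x])$, Type $2b_x$ conjugating $[x]_L$ by an outer letter while fixing $\GG(\ad_\out(x))$, Type $1_x$ permuting $[x]_L$), so that the conjugator accumulates in the split form $uv$. The paper organises this as a single induction on $r+s$ peeling off the last factor, whereas you compute the composite conjugators $u_0=b_1\cdots b_r$ and $c_s\cdots c_1$ explicitly (note only the small notational clash where your final outer conjugator is called $v_0$ rather than $v_s$); this is a presentational difference, not a different argument.
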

\begin{proof}
First assume that $\s=1$. 
If $r+s=0$ there is nothing to prove. % the conclusion follows immediately from the definition of Type $1_x$. 
Assume next that $s>0$ and inductively that $\b_s=(B,b)$, 
$y\a_1\cdots \a_r\b_1\cdots \b_{s-1}=z^{uv}$, for some  word $z$ of length $2$ in $\GG([x])$, $u\in \GG([x])$ and $v\in \GG(\ad_\out(x))$.
Then $y\a=(z^{uv})\b_s=z^{ubv}=z^{uv'}$, where $v'\in \GG(\ad_\out(x))$, so $y\a$  is of the required form. If $s=0$ and $r>0$ then again
$y\a=(z^{uv})\a_r=z^{u'v}$, where $u'=ua$, so $y\a$ is of the required form. Finally, if $\s\neq 1$ then, since $\s$ fixes $\ad_\out(x)$ point-wise
and permutes the elements of $[x]_L$, the result follows.
\end{proof}
\begin{lemma}\label{lem:aone}
Let $\a=\a_1\cdots \a_r\b_1\cdots \b_s \s$ be a factorisation of $\a$, 
as in the conclusion of Lemma \ref{lem:facta}. Assume $\a=1$ in $\St^\v_{x,l}$. Then $\a=1$ in $P$. 
\end{lemma}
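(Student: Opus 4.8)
The plan is to read off the action of $\a$ on the free factor $\GG([x])$ and on $[x]_L$, deduce that each of the three blocks $\a_1\cdots\a_r$, $\b_1\cdots\b_s$ and $\s$ is already trivial in $\St^\v_{x,l}$, and then lift each of these three triviality statements separately to $P$ using the relations $R_x$.

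First I would set up the action. By Lemma \ref{lem:nored} each $\a_i$ is conjugation of $\GG([x])$ by some $c_i\in[x]_L$ (fixing $X\bs[x]$), each $\b_j$ is the partial conjugation of $[x]$ by some $d_j\in\ad_{\out,L}(x)$ (fixing $X\bs[x]$), and $\s$ is a signed permutation of $[x]_L$ fixing $X\bs[x]$. A short induction, of exactly the type carried out in Lemma \ref{lem:xcon}, then shows that $\a_1\cdots\a_r\b_1\cdots\b_s$ acts on the free factor $\GG([x])$ as conjugation by a single element $W_0$ of the free product $\GG([x]\cup\ad_\out(x))=\GG([x])\ast\GG(\ad_\out(x))$, whose $\GG([x])$-syllable is $w_a:=c_1\cdots c_r$ and whose $\GG(\ad_\out(x))$-syllable is $w_b:=d_s\cdots d_1$, and fixes $X\bs[x]$. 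Consequently, for every $g\in[x]$ we have $g\a=(W_0\s)^{-1}(g\s)(W_0\s)$.

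Next I would eliminate $\s$. Since $\a=1$, the identity $g\a=g$ says that $g\s$ is conjugate in the free product to $g$. Both $g$ and $g\s$ lie in the free factor $\GG([x])$, so they are conjugate already in $\GG([x])$, and both are basis elements (elements of $[x]_L$); as conjugate members of a free basis of a free group must coincide, $g\s=g$ for all $g\in[x]$. Hence $\s$ is the identity permutation, and by \ref{it:DR7} it equals $1$ in $P$, so it may be discarded. It then remains to treat $\a_1\cdots\a_r\b_1\cdots\b_s=1$. Evaluating the resulting identity $\a_1\cdots\a_r=(\b_1\cdots\b_s)^{-1}$ on $g\in[x]$ gives $w_a^{-1}gw_a=w_bgw_b^{-1}$, so $w_aw_b$ centralises $\GG([x])$ in the free product. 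Since $\ad_\out(x)\cap\st(x)=\nul$, the centraliser of the free factor $\GG([x])$ is trivial when $|[x]|\ge 2$ (non-abelian free factor) and equals $\GG([x])$ when $|[x]|=1$; noting that for $|[x]|=1$ there are no nontrivial Type $2a_x$ generators (so $r=0$ and $w_a=1$), in both cases one gets $w_a=1$ in $\GG([x])$ and $w_b=1$ in $\GG(\ad_\out(x))$. Thus $\a_1\cdots\a_r=1$ and $\b_1\cdots\b_s=1$ in $\St^\v_{x,l}$.

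Finally I would lift these two statements to $P$, which I expect to be the main obstacle, since it is here that one must verify that $R_x$ imposes \emph{exactly} the right relations on the two blocks. Let $N_a,N_b\le P$ be the subgroups generated by the Type $2a_x$ and Type $2b_x$ generators respectively. For $N_a$: using \ref{it:DR1} to reduce to the vertex generators, the realisation map carries the generators of $N_a$ onto a free basis of $\Inn(\GG([x]))\cong\GG([x])$, so the obvious section $\GG([x])\to N_a$ shows that $N_a\to\Inn(\GG([x]))$ is an isomorphism; hence $\a_1\cdots\a_r$, mapping to $1$, is already $1$ in $P$. For $N_b$, writing $\delta_v$ for the Type $2b_x$ generator conjugating $[x]$ by $v$, relation \ref{it:DR3b} gives $[\delta_v,\delta_{v'}]=1$ whenever $v,v'\in\ad_\out(x)$ are adjacent in $\G$; as these are precisely the defining commutation relations of the RAAG $\GG(\ad_\out(x))$, there is a surjection $\GG(\ad_\out(x))\to N_b$ whose composite with the realisation map $N_b\to\Aut(\GG)$ is the (injective) partial-conjugation representation, forcing $N_b\to\Aut(\GG)$ to be injective. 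Therefore $\b_1\cdots\b_s=1$ in $P$ as well, and combining the three reductions yields $\a=1$ in $P$, as required.
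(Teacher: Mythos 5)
Your proof is correct, but it reaches the conclusion by a genuinely different route from the paper's, most notably in the crucial step of lifting triviality from $\St^\v_{x,l}$ to $P$. On the front end the difference is mild: the paper evaluates $\a$ on the tuple $C_2$, using Lemma \ref{lem:xcon} to force the $\GG(\ad_\out(x))$-part of the conjugator to vanish, and then the squares $x_1^2,x_2^2$ together with $\GG([x])\cap C_\GG(x_1)\cap C_\GG(x_2)=\{1\}$ to kill the $\GG([x])$-part; you instead evaluate $\a$ directly on $[x]$ and quote free-product facts (conjugate elements of $[x]_L$ coincide; the centraliser of the factor $\GG([x])$ in $\GG([x])\ast\GG(\ad_\out(x))$ is trivial, resp.\ $\la x\ra$ when $|[x]|=1$) --- both are legitimate, since the hypothesis is that $\a$ is the identity automorphism. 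The genuine divergence is the endgame. The paper cancels letters one pair at a time inside $P$: since $b_s\cdots b_1=1$ in the RAAG $\GG(\ad_\out(x))$, there is a pair $b_p=b_q^{-1}$ separated only by letters of $\lk_L(b_p)$, and \ref{it:DR3}\,\ref{it:DR3b} together with \ref{it:DR1} delete $\b_p,\b_q$; iterating, and then using free reduction plus \ref{it:DR1} for the $\a_i$, it erases the whole word. You argue structurally instead: \ref{it:DR1} and \ref{it:DR3}\,\ref{it:DR3b} make $b\mapsto\delta_b$ and $v\mapsto\delta_v$ into surjections $\GG([x])\twoheadrightarrow N_a$ and $\GG(\ad_\out(x))\twoheadrightarrow N_b$, and injectivity of their composites with the realisation map forces the realisation to be injective on $N_a$ and $N_b$, so triviality passes to $P$ in one stroke. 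Your route is shorter and exhibits $N_a$, $N_b$ as canonical copies of $\GG([x])$ and $\GG(\ad_\out(x))$ inside $P$; the paper's route stays at the level of explicit manipulation of the relations R$_x$ (in keeping with the peak-reduction style of the section) and needs only the elementary ``cancelling pair'' property of words representing $1$, rather than conjugacy and centraliser theory for free products. Two points you leave implicit and should state: injectivity of the partial-conjugation representation of $\GG(\ad_\out(x))$ follows from $\GG(\ad_\out(x))\cap \GG(\st(g))=\{1\}$ for $g\in[x]$; and discarding $\s$ uses that the identity automorphism, viewed as a Type $1_x$ generator of $P$, is trivialised by the multiplication-table relations \ref{it:DR7} --- a point the paper's own proof also uses tacitly.
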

\begin{proof}
Let $C_2=(y_1,\ldots, y_k)$. Then, from Lemma \ref{lem:xcon}, we have $C_2\a=(z_1^{u_1v_1},\ldots, z_k^{u_kv_k})$, where 
$z_i$ is of length $2$ in $\GG([x])$, $u_i\in \GG([x])$ and $v_i\in \GG(\ad_\out(x))$. As $\a=1$ in $\St^\v_{x,l}$, we have $C_2\a=C_2$ so
$z_i^{u_iv_i}=y_i$, for $i=1,\ldots, k$. No letter of $\ad_\out(x)$ commutes with any element of $[x]$, so this implies that $v_i=1$, for 
all $i$. Let $\b_j=([x]_L,b_j)$, for $j=1,\ldots ,s$; so $v_i=b_s\cdots b_1=1$, for all $i$.  Therefore there exist $1\le p<q\le s$ such that 
$b_p=b_q^{-1}$ and $b_j\in \lk_L(b_p)$, for $p<j<q$.
From \ref{it:DR3} \ref{it:DR3b} and \ref{it:DR1} it follows that, in $P$,  
\[
\b_p\b_{p+1}\cdots \b_{q-1}\b_q=\b_{p+1}\cdots \b_{q-1}\b_p\b_q=\b_{p+1}\cdots \b_{q-1},\]
so 
\[\b_1\cdots \b_s
=\b_1\cdots \b_{p-1}\b_{p+1}\cdots \b_{q-1}\b_{q+1}\cdots \b_s.\]
%\b_s\cdots \b_{q+1}\b_q\b_p\b_{q-1}\cdots \b_{p+1}\b_{p-1}\cdots \b_1
Continuing this process we eventually obtain $\a=\a_1\cdots \a_r\s$ in $P$. Then $y_i\a=z_i^{u_i}=y_i$, where $z_i$ and $u_i$ are as before,
for $i=1,\ldots ,k$.

If $[x]=\{x\}$, then there are no automorphisms of Type $2a_x$ and so $\a=\s$ is of Type $1_x$. As $\a=1$ in $\St^\v_{x,l}$, this implies that 
$\s$ is the identity permutation of $[x]_L$, so $\s=1$ in $P$, as required. Assume then that $|[x]|\ge 2$. For $1\le j\le r$ let 
$\a_j=([x]_L-a^{-1}_j,a_j)$, where $a_j\in [x]_L$. Then, for $1\le i\le k$ we have $y_i=y_i\a=y_i^{a_r\cdots a_1}\s$. In particular, if $x_1$ and $x_2$ are
distinct elements of $[x]$ then $x_1^2$ and $x_2^2$ both appear in $C_2$, 
\[x_1^2\s^{-1}=(x_1^2)^{a_r\cdots a_1}\textrm{ and }  x_2^2\s^{-1}=(x_2^2)^{a_r\cdots a_1}.\]
This occurs only if $x_i\s=x_i$, for $i=1,2$. Furthermore, 
from  \cite{Baudisch77}, $x_i^2=(x_i^2)^{a_r\cdots a_1}$ only if $(a_r\cdots a_1)\in C_\GG(x_i)$, for $i=1,2$.
As $(a_r\cdots a_1)\in\GG([x])$ and $\GG([x])\cap C_\GG(x_1)\cap   C_\GG(x_2)=\{1\}$, we have 
$a_r\cdots a_1=1$ in $\GG([x])$. As $\GG([x])$ is free there must therefore exist $j$ such that $a_{j-1}=a_j^{-1}$, so $\a_{j-1}\a_j=1$ in $P$, using 
\ref{it:DR1}. Continuing this process we again obtain $\a=\s$ and as $\a=1$ in $\St^\v_{x,l}$ we now have $\a=1$ in $P$. 
\end{proof}
\begin{proof}[Proof of Theorem \ref{thm:stxvf}]
As observed above it is necessary only to show that the canonical homomorphism from $P$ to $\St^\v_{x,l}$ is injective. 
Let $\a\in \FF(\W_x)$ and assume that $\a=1$ in $\St^\v_{x,l}$. Write $\a=\phi_1\cdots \phi_{n}$, where $\phi_i\in \W_x$ and 
define $C_{2,j}=C_2\phi_1\cdots \phi_j$, for $1\le j\le n$, and $C_{2,0}=C_2$. If $\phi_1\cdots \phi_{n}$ is not peak reduced with 
respect to $C_2$ then we say that $\phi_j\phi_{j+1}$ is a peak of \emph{height} $m$ (for $C_2$) if $\phi_{j}\phi_{j+1}$ is a peak 
for $C_{2,j-1}$ and $|C_{2,j}|_\sim=m$. Let $m$ be the maximum of the heights of peaks for $C_2$ and let $p$ be minimal such that 
$\phi_p\phi_{p+1}$ is a peak of height $m$.  This implies that $|C_{2,p-1}|_\sim<|C_{2,p}|_\sim=m$. Also, 
define the \emph{peak length (with respect to $C_2$)} of the factorisation  to  be the number of indices $j$ such that 
$|C_j|_\sim=m$; and assume the peak length of $\phi_1\cdots \phi_{n}$ is $M$. 
 From Lemma \ref{lem:Daypl}, there exist $\d_1,\ldots, \d_s\in \W_x$ such that $\phi_p\phi_{p+1}=\d_1\cdots \d_s$ in $P$ and 
$|C_{2,p-1}\d_1\cdots \d_t|_\sim<|C_{2,p}|_\sim$,  for $1\le t\le s-1$. 
 Therefore we have $\a=\phi_1\cdots \phi_{p-1}\d_1\cdots \d_s\phi_{p+1}\cdots \phi_n$ in $P$ and in this factorisation, either the 
 maximum height of peaks for $C_2$ is less than $m$, or the peak length is less than $M$.  This process may therefore be
 repeated until we obtain a factorisation of $\a$, in $P$, which is peak reduced with respect to $C_2$. 

Assume then that  $\a=\phi_1\cdots \phi_{n}$ in $P$, where $\phi_i\in \W_x$ and that this factorisation is peak reduced with
respect to $C_2$. From Lemma \ref{lem:nored} and the fact that $\a=1$ in $\St^\v_{x,l}$, 
we have $|C_{2,j}|_\sim=|C_2|_\sim$; so $\phi_j$ is of type $1_x$, $2a_x$ or $2b_x$, for $j=1,\ldots, n$. From Lemma \ref{lem:facta}
there is, in $P$,  a peak reduced factorisation $\a=\a_1\cdots \a_r\b_1\cdots \b_s\s$, where $\a_j$ is of Type $2a_x$, $\b_j$ is of Type $2b_x$ and
$\s$  is of Type $1_x$. Lemma \ref{lem:aone} then implies that $\a=1$ in $P$, as required.
\end{proof}
%%%%%%%%%%%%%%%%%%%%%%%%%%%%%%%%%%%%%%%%%%%%%%%%%
%
%
%%%%%%%%%%%%%%%%%%%%%%%%%%%%%%%%%%%%%%%%%%%%%%%%%
\bibliographystyle{plain}
\bibliography{aut}
\end{document}